\title{Rokhlin dimension: obstructions and permanence properties}
\author{Ilan Hirshberg}
\address{Department of Mathematics, Ben Gurion University of the Negev, P.O.B.
653, Be'er Sheva 84105, Israel}
\author{N. Christopher Phillips}
\address{Department of Mathematics,
University of Oregon,
Eugene OR 97403-1222,
U.S.A.}
\thanks{This research was supported in part by the US-Israel Binational Science
Foundation. This material is partially based on work
  of the second author supported by the
  US National Science Foundation under Grant DMS-1101742.}
\theoremstyle{plain}
\newtheorem{Thm}{Theorem}[section]
\newtheorem{Cor}[Thm]{Corollary}
\newtheorem{Lemma}[Thm]{Lemma}
\newtheorem{Prop}[Thm]{Proposition}
\theoremstyle{definition}
\newtheorem{Def}[Thm]{Definition}
\newtheorem{Notation}[Thm]{Notation}
\newtheorem{Exl}[Thm]{Example}
\newtheorem{Rmk}[Thm]{Remark}
\newcommand{\B}{B}
\newcommand{\A}{A}
\newcommand{\J}{J}
\newcommand{\K}{\mathcal{K}}
\newcommand{\D}{D}
\newcommand{\Ch}{D}
\newcommand{\Zh}{\mathcal{Z}}
\newcommand{\E}{E}
\newcommand{\Oh}{\mathcal{O}}
\newcommand{\T}{{\mathbb T}}
\newcommand{\R}{{\mathbb R}}
\newcommand{\N}{{\mathbb N}}
\newcommand{\Z}{{\mathbb Z}}
\newcommand{\C}{{\mathbb C}}
\newcommand{\Q}{{\mathbb Q}}
\newcommand{\aut}{\mathrm{Aut}}
\newcommand{\eps}{\varepsilon}
\numberwithin{equation}{section}
\newcommand{\dimrokct}{\dim^{\mathrm{c}}_{\mathrm{Rok}}}
\newcommand{\dr}{\mathrm{dr}}
\newcommand{\id}{\mathrm{id}}
\newcommand{\dimnuc}{\mathrm{dim}_{\mathrm{nuc}}}
\newcommand{\supp}{\mathrm{supp}}
\begin{document}
\begin{abstract}
This paper is a further study of finite Rokhlin dimension for actions of finite
groups and the integers on $C^*$-algebras, introduced by the first author,
Winter, and Zacharias. We extend the definition of finite Rokhlin dimension to
the nonunital case. This definition behaves well with respect to extensions,
and is sufficient to establish permanence of finite nuclear dimension and
$\Zh$-absorption. We establish $K$-theoretic obstructions to the existence of
actions of finite groups with finite Rokhlin dimension (in the commuting tower
version). In particular, we show that there are no actions of any nontrivial
finite group on the Jiang-Su algebra or on the Cuntz algebra $\Oh_{\infty}$ with
finite Rokhlin dimension in this sense. 
\end{abstract}
\maketitle

The study of group actions on $C^*$-algebras, and their associated
crossed products, has always been a central research theme in
operator algebras.
One would like to identify properties of
group actions which on the one hand occur commonly and naturally
enough to be of interest, and on the other hand are strong enough
to be used to derive interesting properties of the action or of
the crossed product. Examples of important properties for a group action meeting
these criteria are the various forms of the Rokhlin property, which arose early
on in the theory. See, for instance, \cite{Izumi-survey} and references therein
for actions of $\Z$ and \cite{Izumi-I,Izumi-II,Phillips-survey,OP} for the finite group case.
The Rokhlin property for the single automorphism case is quite prevalent, and generic in some cases, forming a dense $G_{\delta}$ set in the
automorphism group (see \cite{HWZ}). 
However, it requires the existence of projections, and thus will not occur in
cases of interest which have few projections, such as automorphisms of the
Jiang-Su algebra $\Zh$ or automorphisms arising from topological dynamical
systems on connected spaces. 

Rokhlin dimension was introduced in \cite{HWZ} as a generalization of the
Rokhlin property, motivated by
the definition of covering dimension for topological spaces. In this formulation, the Rokhlin property becomes Rokhlin
dimension 0. In the definition of higher Rokhlin dimension, the projections from
the Rokhlin property are replaced by positive elements with controlled overlaps. 
This generalizations covers many more cases. It is shown in \cite{HWZ} that for
separable unital $\Zh$-absorbing $C^*$-algebras, the property of having Rokhlin
dimension at most 1 is generic. In the commutative setting, it was shown that if $X$ is a
compact metrizable space of finite covering dimension, $h \colon X \to X$ is a minimal
homeomorphism, and $\alpha \in \aut(C(X))$ is given by $\alpha(f) = f \circ h^{-1}$,
then $\alpha$ has finite Rokhlin dimension. The result concerning
homeomorphisms was generalized recently in \cite{szabo} to the case of
free actions of $\Z^m$ on finite dimensional spaces.

Actions of finite groups with the Rokhlin property are much less common. As in
the case of a single automorphism, it requires projections, thereby ruling out
 actions on $\Zh$ with the Rokhlin property. Even when
there are many projections, there are simple $K$-theoretic
obstructions to the existence of actions with the Rokhlin property. For instance, since any automorphism of $\Oh_{\infty}$ acts trivially on $K_0$, if $\alpha$ is an action of a finite group $G$ on
$\Oh_{\infty}$ with the Rokhlin property, and $(e_g)_{g \in G}$ is a family of Rokhlin
projections, then $[e_g] = [e_h]$ in $K_0(\Oh_{\infty})$ for all $g,h \in G$. Thus,
$[1]$ is divisible by the order of the group, $\# G$. This cannot happen if $G$ has more than one
element. Likewise, one can see that there are no actions of $\Z_p = \Z/p\Z$ on the UHF algebra 
$M_{q^{\infty}}$ with the Rokhlin property if $p$ does not divide some power of $q$. For more, we refer the reader to \cite[Example 3.12]{Phillips-survey} and the discussion after it.

This paper is devoted to a further study of Rokhlin dimension, mainly for the
finite group case.
In Sections
\ref{sec:finite-group-permanence-properties} and
\ref{sec:single-auto-permanence-properties} we generalize Rokhlin dimension to the
nonunital case. Our definition is sufficient for generalizing the results concerning
permanence of finite nuclear dimension and decomposition rank
(\cite{kirchberg-winter,winter-zacharias}) and $\Zh$-absorption, and behaves
well with respect to extensions. In Section \ref{sec:obstructions} we study $K$-theoretic
obstructions to finite Rokhlin dimension.
The $K$-theoretic obstructions here are more subtle than the ones described
above, and involve the structure of equivariant $K$-theory viewed as a
module over the representation ring. As a consequence we show, for instance,
that there are no actions of (nontrivial) finite groups on $\Zh$ or
$\Oh_{\infty}$ with the commuting tower version of finite Rokhlin dimension.
There are, however, natural examples of actions of finite group
actions on $C^*$-algebras which do not have the Rokhlin property but do have
finite Rokhlin dimension; see for instance Example
\ref{Exl:irrational-rotation-algebra}. The distinction between the commuting tower and noncommuting tower
versions of Rokhlin dimension initially appeared to be a minor technicality. However,
it was recently shown in \cite[Theorem 2.3]{BEMSW} that any outer action of $\Z_2$ on $\Oh_{\infty}$ has
Rokhlin dimension 1 in the noncommuting tower sense. This sharply contrasts with the results we present in Section \ref{sec:obstructions}. Likewise, the action of $S_n$ by permutation on the tensor factors of $\Zh \cong \Zh^{\otimes n}$ (see \cite{HW-permutations}) does not have finite Rokhlin dimension with commuting towers, but recently it has been shown (\cite[Proposition 6.10]{SWZ}) that this action has Rokhlin dimension 1 without commuting towers. Our results furthermore show that for
finite group actions, there is indeed a genuine difference between the commuting tower version of finite
Rokhlin dimension and the various projectionless versions of the tracial Rokhlin
property (\cite{Sato,hirshberg-orovitz}).

We use the following notational conventions throughout. We write $\# G$ for the number of elements in a group $G$. We write $\Z_p = \Z/p\Z$, since the $p$-adic numbers make no appearance in the paper. Order zero maps are always assumed to be completely positive (although not necessarily contractive). 

\section{Preliminaries}
\label{sec:preliminaries}

 We recall the definition of Rokhlin dimension from
\cite{HWZ}.
\begin{Def}
\label{def: positive Rokhlin finite groups}
Let $G$ be a finite group, let $\A$ be a unital $C^*$-algebra and let $\alpha \colon G \to
\textup{Aut} (\A)$ be an action of $G$ on $\A$.
We say that $\alpha$ has Rokhlin dimension $d$ with commuting towers, and write
$\dimrokct(\alpha)=d$, if $d$ is the least integer such that the following
holds. For any 
$\eps >0$ and every finite subset $F \subset \A$ there is a family 
$\left( f^{(l)}_g \right)_{l=0,1,\ldots,d\, ; g \in G}$ of positive contractions in $\A$
such that:
\begin{enumerate}
\item \label{def-pRfg-ortho} $f_{g}^{(l)}f_{h}^{(l)} = 0$ for $l=0,1,\ldots,d$ and $g,h \in G$ with $g \neq h$.
\item $\left\| \displaystyle \sum_{l=0}^{d}\sum_{g \in G}  f_{g}^{(l)} - 1
\right\|<\eps$.
\item $\left\|\big[f_{g}^{(l)},a \big]\right\| < \eps$ for all $l \in \{ 0,1
,\ldots, d\}$, $g\in G$, and $a \in F$.
\item \label{def-pRfg-permut} $\left\|\alpha_h \Big(f_{g}^{(l)}\Big) - f_{hg}^{(l)} \right\| <\eps$ for
all $l \in \{ 0 ,1,\ldots, d\}$ and 
$g \in G$.
\item $\left\| \left [ f_{g}^{(l)},f_{h}^{(k)} \right ]\right\|< \eps$ for any
$k,l \in \{ 0 ,1,\ldots, d\}$ and for any $g,h\in G$.
\end{enumerate} 
\end{Def}

The definition is equivalent if we replace the orthogonality condition (\ref{def-pRfg-ortho}) above by the formally weaker
condition: $\left\|f_{g}^{(l)}f_{h}^{(l)}\right\|< \eps$. If we weaken this condition
in this way, then we can strengthen the group translation condition (\ref{def-pRfg-permut}) to be exact:
$\alpha_h \Big(f_{g}^{(l)}\Big) = f_{hg}^{(l)}$.
 
The following equivalent formulation is a straightforward exercise, and the
proof will be omitted. We define $\A_{\infty} = l^{\infty}(\N,\A)/c_0(\N,\A)$,
with $\A$ identified with the subalgebra of constant sequences in $\A_{\infty}$.
We denote by $\overline{\alpha}$ the induced actions of $G$ on $\A_{\infty}$ and on
$\A_{\infty} \cap \A'$. (We caution the reader that there are conflicting conventions concerning notation for sequence algebras in the literature; some authors use $\A^{\infty}$ for what we call $\A_{\infty}$, and $\A_{\infty}$ for what we call $\A_{\infty} \cap \A'$.)

\begin{Lemma}
\label{Lemma:unital-central-sequence-reformulation-finite-group}
Let $G$ be a finite group, let $\A$ be a unital separable $C^*$-algebra, and let $\alpha \colon G
\to \textup{Aut} (\A)$ be an action of $G$ on $\A$.
Then $\dimrokct(\alpha)=d$ if and only if $d$ is the least integer such that the
following holds:  there is a family  
$\left( f^{(l)}_g \right)_{l=0,1,\ldots,d \, ; g \in G}$ of positive contractions in  $\A_{\infty} \cap
\A'$
such that
\begin{enumerate}
\item $f_{g}^{(l)}f_{h}^{(l)} = 0$, for $l \in \{ 0,1
,\ldots, d\}$ and $g,h \in G$ with $g \neq h$.
\item $\displaystyle \sum_{l=0}^{d}\sum_{g \in G}  f_{g}^{(l)} =1$.
\item $\overline{\alpha}_h \Big(f_{g}^{(l)}\Big) = f_{hg}^{(l)}$ for all $l \in \{ 0,1
,\ldots, d\}$ and 
$g \in G$.
\item $\left [ f_{g}^{(l)},f_{h}^{(k)} \right ] =0$ for any $k,l \in \{ 0,1
,\ldots, d\}$ and for any $g,h\in G$.
\end{enumerate} 
\end{Lemma}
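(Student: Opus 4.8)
The plan is to show that the two families of conditions pick out the same set of integers $d$, so that the least element of each set---namely $\dimrokct(\alpha)$ on one side, and the least $d$ admitting an exact family on the other---coincides. It therefore suffices to prove, for a fixed $d$, that approximate families as in Definition~\ref{def: positive Rokhlin finite groups} exist (for every $\eps$ and every finite $F$) if and only if there is an exact family of the stated form in $\A_{\infty} \cap \A'$. This is the standard reindexing argument relating approximate relations in $\A$ to exact relations in the sequence algebra, and separability of $\A$ is exactly what makes it work.

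For the implication from $\A_{\infty} \cap \A'$ to $\A$, I would begin with an exact family $(f_g^{(l)})$ in $\A_{\infty} \cap \A'$ and lift each $f_g^{(l)}$ to a bounded sequence $(f_{g,n}^{(l)})_n$ in $\A$. Applying functional calculus coordinatewise, I can arrange each representative to be a positive contraction without changing its class in $\A_{\infty}$ (the usual fact that positive contractions in a quotient lift to positive contractions). Each exact relation in $\A_{\infty}$---orthogonality, the sum being $1$, the commutation relations (4)--(5), and the equivariance $\overline{\alpha}_h(f_g^{(l)}) = f_{hg}^{(l)}$---says precisely that the corresponding sequence of defects lies in $c_0(\N,\A)$, i.e.\ has norm tending to $0$. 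Membership in $\A'$ means that for each fixed $a \in \A$ the sequence $([f_{g,n}^{(l)},a])_n$ lies in $c_0$. Hence, given $\eps>0$ and a finite set $F$, all of conditions (1)--(5) of Definition~\ref{def: positive Rokhlin finite groups} hold for the single family $(f_{g,n}^{(l)})_g$ once $n$ is taken large enough, since only finitely many relations and finitely many elements of $F$ must be controlled.

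For the converse, I would use separability to fix an increasing sequence $F_1 \subseteq F_2 \subseteq \cdots$ of finite subsets of $\A$ whose union is dense, and set $\eps_n = 1/n$. For each $n$, Definition~\ref{def: positive Rokhlin finite groups} supplies a family of positive contractions $(f_{g,n}^{(l)})$ satisfying (1)--(5) with tolerance $\eps_n$ and test set $F_n$. Assembling these coordinatewise defines, for each $l$ and $g$, a positive contraction $f_g^{(l)} \in \A_{\infty}$. Since $\eps_n \to 0$, the approximate orthogonality, the approximate sum condition, and the approximate relations (4)--(5) all pass to exact identities in $\A_{\infty}$; and because every $a$ in the dense set $\bigcup_n F_n$ is eventually a test element with tolerance going to $0$, each $f_g^{(l)}$ commutes with all of $\A$ and so lies in $\A_{\infty} \cap \A'$. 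The translation condition likewise becomes the exact equivariance $\overline{\alpha}_h(f_g^{(l)}) = f_{hg}^{(l)}$.

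The one point requiring care---and the step I would regard as the main (if minor) obstacle---is reconciling the exact and approximate versions of the orthogonality and translation conditions. Definition~\ref{def: positive Rokhlin finite groups} pairs exact orthogonality with approximate equivariance, whereas the reverse implication most naturally yields exact equivariance together with orthogonality that is exactly $0$ in $\A_{\infty}$. Here I would invoke the remark immediately following Definition~\ref{def: positive Rokhlin finite groups}: one may freely trade exact orthogonality for the formally weaker $\|f_g^{(l)} f_h^{(l)}\| < \eps$ while strengthening equivariance to be exact. This renders the two formulations interchangeable and lets the approximate orthogonality produced by a lift from $\A_{\infty}$ match the requirements of the definition. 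Everything else is bookkeeping over the finitely many indices $l \in \{0,\dots,d\}$ and $g,h \in G$.
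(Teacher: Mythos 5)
Your proof is correct and is precisely the standard reindexing argument the paper has in mind (it omits the proof as a ``straightforward exercise''): lift and evaluate at a large coordinate in one direction, assemble witnesses along $F_n$ and $\eps_n = 1/n$ in the other, with separability ensuring centrality against a dense set suffices. You also correctly identify and resolve the only delicate point, the mismatch between exact orthogonality with approximate equivariance in Definition~\ref{def: positive Rokhlin finite groups} versus exact relations in $\A_{\infty} \cap \A'$, by invoking the equivalence noted in the remark following that definition.
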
 
 
\begin{Def}\label{Def:ecm-map}
Let $G$ be a compact group,
let $\A$ and $\Ch$ be unital $C^*$-algebras,
and let $\alpha \colon G \to \aut (\A)$
and $\gamma \colon G \to \aut (\Ch)$
be actions of $G$ on $\A$ and $\Ch$. Let $F_0 \subseteq \Ch$ and $F \subseteq
\A$ be finite sets, and let $\eps>0$. A unital completely positive map $Q\colon \Ch \to \A$ is said to
be an \emph{$(F_0,F,\eps)$-equivariant central multiplicative map} if:
\begin{enumerate}
\item
$\|Q (x y) - Q (x) Q (y) \|<\eps$ 
for all $x, y \in F_0$.
\item
$\|Q (x) a - a Q (x) \| < \eps$
for all $x \in F_0$ and all $a \in F.$
\item
\label{ecm-map-almost-equivariance}
$ \sup_{g \in G} \| Q (\gamma_g (x)) - \alpha_g (Q (x)) \| < \eps$
for all $x \in F_0$.
\end{enumerate}
If for any such $F_0,F,\eps$ there is an $(F_0,F,\eps)$-equivariant central
multiplicative map from $\Ch$ to $\A$ then we say that $\A$ admits an
\emph{approximate equivariant central unital homomorphism from $\Ch$.}
\end{Def}

\begin{Rmk}
\label{Rmk:ecm-map-equiv}
One can replace condition (\ref{ecm-map-almost-equivariance}) in Definition \ref{Def:ecm-map} with the requirement that the map $Q$ be equivariant. To see that, we first notice that we can require instead that $F$ and $F_0$ be compact and get an equivalent definition. Fix $F_0$, $F$, and $\eps$ as in Definition \ref{Def:ecm-map}. Assume without loss of generality that all elements of $F$ and $F_0$ have norm at most $1$. Fix a map $Q$ as in Definition \ref{Def:ecm-map}, where $F$ and $F_0$ are replaced by their orbits under $G$, and $\eps$ is replaced by $\eps/2$. Define
$$
\widetilde{Q}(x) = \int_G \alpha_{g^{-1}} (Q(\gamma_g(x))) dg \; .
$$ 
It is easy to see that $\widetilde{Q}$ is a $G$-equivariant map that satisfies the conditions of Definition \ref{Def:ecm-map}.
\end{Rmk}

We can reformulate this property in terms of the central sequence algebra as
well, under some stricter assumptions.
\begin{Lemma}\label{Lemma:ecm-central-sequence}	
Let $G$ be a finite group,
let $\A$ and $\Ch$ be unital separable $C^*$-algebras, with $\Ch$ nuclear, 
and let $\alpha \colon G \to \aut (\A)$
and $\gamma \colon G \to \aut (\Ch)$
be actions of $G$ on $\A$ and $\Ch$. Then $\A$ admits an approximate equivariant
central unital homomorphism from $\Ch$ if and only if there is an equivariant
unital homomorphism $\Psi\colon\Ch \to \A_{\infty} \cap A'$.
\end{Lemma}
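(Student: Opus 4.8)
The plan is to prove both implications by the standard interplay between approximate properties at the level of $\A$ and exact properties at the level of the sequence algebra $\A_{\infty}$. Throughout I will use that a unital completely positive map is contractive, so that any sequence of unital completely positive maps $Q_n \colon \Ch \to \A$ assembles into a bounded map into $\ell^{\infty}(\N,\A)$, and that the induced action $\overline{\alpha}$ is implemented coordinatewise by $\alpha$.

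For the forward implication, suppose $\A$ admits an approximate equivariant central unital homomorphism from $\Ch$. Using separability of $\Ch$ and $\A$, fix increasing sequences of finite sets $F_0^{(1)} \subseteq F_0^{(2)} \subseteq \cdots \subseteq \Ch$ and $F^{(1)} \subseteq F^{(2)} \subseteq \cdots \subseteq \A$ with dense unions. For each $n$ choose, by hypothesis, an $(F_0^{(n)}, F^{(n)}, 1/n)$-equivariant central multiplicative map $Q_n \colon \Ch \to \A$, and let $\Psi \colon \Ch \to \A_{\infty}$ be the composition of $x \mapsto (Q_n(x))_{n}$ with the quotient map. Each defining inequality of Definition \ref{Def:ecm-map} then becomes an exact identity in the limit: approximate multiplicativity gives that $\Psi$ is a homomorphism, the asymptotic commutation with $F^{(n)}$ gives $\Psi(\Ch) \subseteq \A_{\infty} \cap \A'$ (here one uses density of $\bigcup_n F^{(n)}$ in $\A$ together with continuity of commutators), and approximate equivariance gives $\Psi \circ \gamma_g = \overline{\alpha}_g \circ \Psi$ for each $g$ (a finite condition, since $G$ is finite). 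Unitality and complete positivity of $\Psi$ are inherited from the $Q_n$, and a density/continuity argument extends the homomorphism and equivariance identities from $\bigcup_n F_0^{(n)}$ to all of $\Ch$.

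For the reverse implication, suppose there is an equivariant unital homomorphism $\Psi \colon \Ch \to \A_{\infty} \cap \A'$. Viewing $\Psi$ as a unital completely positive map into the quotient $\A_{\infty} = \ell^{\infty}(\N,\A)/c_0(\N,\A)$ and invoking the Choi--Effros lifting theorem, which applies because $\Ch$ is separable and nuclear, I obtain a unital completely positive lift $\widetilde{\Psi} \colon \Ch \to \ell^{\infty}(\N,\A)$. Writing $\widetilde{\Psi}(x) = (Q_n(x))_{n}$ exhibits a bounded sequence of unital completely positive maps $Q_n \colon \Ch \to \A$. That $\Psi$ is multiplicative, lands in $\A'$, and is equivariant translates, respectively, into $\|Q_n(xy) - Q_n(x)Q_n(y)\| \to 0$, $\|[Q_n(x),a]\| \to 0$, and $\|Q_n(\gamma_g(x)) - \alpha_g(Q_n(x))\| \to 0$ for each fixed $x,y \in \Ch$, $a \in \A$, and $g \in G$; the last uses that $\overline{\alpha}_g$ acts coordinatewise and that $G$ is finite. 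Given $F_0, F, \eps$, choosing $n$ large enough to handle the finitely many relevant elements simultaneously produces an $(F_0, F, \eps)$-equivariant central multiplicative map $Q_n$.

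The main obstacle is the reverse implication, and specifically the step that produces genuinely completely positive (not merely asymptotically multiplicative) approximating maps $Q_n$: a representing sequence for $\Psi$ exists trivially, but there is no reason for its terms to be completely positive unless the lift is chosen that way. This is exactly where the nuclearity hypothesis on $\Ch$ is essential, via Choi--Effros. I would also take care that the lift need not itself be equivariant or multiplicative; only the asymptotic versions of these properties are needed, and those are automatic from $\Psi$ being an honest equivariant homomorphism.
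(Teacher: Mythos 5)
Your proposal is correct and follows essentially the same route as the paper: a diagonal sequence of equivariant central multiplicative maps composed with the quotient onto $\A_{\infty}$ in one direction, and a Choi--Effros unital completely positive lift (this is exactly where the paper uses nuclearity of $\Ch$) in the other. No gaps.
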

\begin{proof}
Suppose $\A$ admits an approximate equivariant central unital homomorphism from
$\Ch$. Since $\A$ and $\Ch$ are separable, we can choose increasing sequences of finite sets  $F_0(n)\subseteq \Ch$ and $F(n) \subseteq \A$ such that $\bigcup_n F_0(n)$ is dense in $\Ch$ and $\bigcup_n F(n)$ is dense in $\A$. We now choose a
sequence of $(F_0(n),F(n),2^{-n})$-equivariant central multiplicative maps $Q_n\colon
\Ch \to \A$. We define $\Psi$ to be the composition of the map $(Q_1,Q_2,\ldots)\colon\Ch \to l^{\infty}(\A)$ with the quotient onto $\A_{\infty}$. 

Conversely, if $\Psi\colon\Ch \to \A_{\infty} \cap
\A'$ is a homomorphism as in the statement, we find a unital completely positive lifting $Q =
(Q_1,Q_2,\ldots)\colon\Ch \to l^{\infty}(\A)$ using the Choi-Effros lifting theorem. It is readily verified that for any finite subsets $F_0 \subseteq \Ch$ and $F\subseteq \A$ and for any $\eps>0$, $Q_n$ will be an
$(F_0,F,\eps)$-equivariant central multiplicative map for all sufficiently large
$n$.
\end{proof}

We now introduce the following further generalization of Rokhlin dimension.
\begin{Def}\label{D_3410_XRokh}
Let $G$ be a compact group,
let $\A$ be a unital $C^*$-algebra,
and let $\alpha \colon G \to \aut (\A)$ be an action of $G$ on $\A.$
Let $X$ be a compact free $G$-space.
We say that $\alpha$ has the
{\emph{$X$-Rokhlin property}} if
$\A$ admits an approximate equivariant central unital homomorphism from
 $C (X).$
\end{Def}

The following lemma shows that this is indeed a generalization of finite Rokhlin
dimension.
\begin{Lemma}\label{L_3410_CommTw}
For every finite group~$G$
and every nonnegative integer $d$
there is a compact metrizable free $G$-space~$X$
such that an action $\alpha \colon G \to \aut (\A)$
on a unital $C^*$-algebra $A$ has $\dimrokct(\alpha) \leq d$
if and only if $\alpha$ has the $X$-Rokhlin property.
\end{Lemma}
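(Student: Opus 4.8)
The plan is to identify, for each finite group $G$ and each $d$, a concrete compact metrizable free $G$-space $X$ whose function algebra $C(X)$ with the $G$-action encodes exactly the data appearing in Definition \ref{def: positive Rokhlin finite groups}. The natural candidate is the $(d+1)$-fold join of $G$ with itself, i.e. $X = G^{*(d+1)} = G * G * \cdots * G$, where $G$ carries the left-translation action and the join carries the diagonal action. This is a standard model: the join of free $G$-spaces is free, and $G^{*(d+1)}$ is compact, metrizable, and finite-dimensional. Crucially, a point of $X$ can be written as a formal convex combination $\sum_{l=0}^{d} t_l \, g_l$ with $t_l \geq 0$, $\sum_l t_l = 1$, and $g_l \in G$ (with the usual identifications collapsing terms where $t_l = 0$), and the $G$-action permutes the labels $g_l$ by left multiplication while fixing the coefficients $t_l$. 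The coordinate functions $f_g^{(l)}$ on $X$, defined by $f_g^{(l)}\big(\sum_k t_k h_k\big) = t_l \cdot [h_l = g]$ (suitably made continuous across the identifications), are exactly positive contractions satisfying the algebraic relations in Lemma \ref{Lemma:unital-central-sequence-reformulation-finite-group}.

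First I would make the identification precise in the central sequence picture. By Lemma \ref{Lemma:ecm-central-sequence}, since $C(X)$ is nuclear and $G$ is finite, $\alpha$ has the $X$-Rokhlin property if and only if there is a unital equivariant homomorphism $\Psi \colon C(X) \to \A_\infty \cap \A'$. On the other hand, by Lemma \ref{Lemma:unital-central-sequence-reformulation-finite-group}, $\dimrokct(\alpha) \leq d$ if and only if there is a family of positive contractions $\big(f_g^{(l)}\big)$ in $\A_\infty \cap \A'$ satisfying conditions (1)--(4) of that lemma. So the whole statement reduces to showing that such a family in any target algebra $B$ (here $B = \A_\infty \cap \A'$, but the correspondence is purely a statement about $C^*$-algebra relations and $G$-equivariance) is in natural bijection with unital equivariant homomorphisms $C(G^{*(d+1)}) \to B$.

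The key step is thus a universal-property computation: I would show that $C(G^{*(d+1)})$ is the universal unital $C^*$-algebra generated by commuting positive contractions $\big(f_g^{(l)}\big)_{l=0,\ldots,d;\, g \in G}$ subject to (i) $f_g^{(l)} f_h^{(l)} = 0$ for $g \neq h$ and (ii) $\sum_{l,g} f_g^{(l)} = 1$, with the $G$-action determined by $g \cdot f_h^{(l)} = f_{gh}^{(l)}$. Since all generators commute, this universal algebra is commutative, hence of the form $C(Y)$ for some compact space $Y$; identifying its spectrum by reading off the characters—each character assigns values $\lambda_g^{(l)} \geq 0$ with $\lambda_g^{(l)} \lambda_h^{(l)} = 0$ for $g \neq h$ and $\sum \lambda_g^{(l)} = 1$—recovers exactly the join description, so $Y \cong G^{*(d+1)}$ and the $G$-actions match. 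Conditions (3) (group translation) and (4) (commutativity) in Lemma \ref{Lemma:unital-central-sequence-reformulation-finite-group} correspond respectively to equivariance of $\Psi$ and to the fact that $C(Y)$ is commutative, while conditions (1) and (2) are the defining relations. Freeness of the $G$-action on $G^{*(d+1)}$—needed so that $X$ qualifies as a free $G$-space in Definition \ref{D_3410_XRokh}—follows from freeness of left translation on each join factor.

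The main obstacle I anticipate is verifying the spectrum computation carefully, in particular handling the identifications built into the join. The coordinate functions $t_l$ and the label-selection $[h_l = g]$ are individually discontinuous on the join (the label $h_l$ is undefined when $t_l = 0$), but their products $f_g^{(l)} = t_l \cdot [h_l = g]$ extend continuously across the collapsing relations precisely because they vanish wherever the label becomes ambiguous; making this rigorous, and checking that these functions generate all of $C(G^{*(d+1)})$ (e.g. via Stone--Weierstrass, noting they separate points of the join), is the technical heart. Once the homeomorphism $Y \cong G^{*(d+1)}$ and the equivariance are established, the equivalence of the two characterizations is immediate, and finite-dimensionality and metrizability of $X$ are standard properties of finite joins of finite discrete spaces.
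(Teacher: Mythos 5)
Your proposal is correct and follows essentially the same route as the paper: both arguments take $X$ to be the spectrum of the universal commutative $C^*$-algebra on the generators $f_g^{(l)}$ with the orthogonality and partition-of-unity relations, and then reduce the equivalence to Lemmas \ref{Lemma:unital-central-sequence-reformulation-finite-group} and \ref{Lemma:ecm-central-sequence}. The only difference is cosmetic: you identify this spectrum concretely as the join $G^{*(d+1)}$ and deduce freeness from freeness of each join factor, whereas the paper leaves $X$ as an abstract compact subset of a cube and checks freeness directly from the relation $f_g^{(k)} f_{h^{-1}g}^{(k)} = 0$.
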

\begin{proof}
Consider the universal $C^*$-algebra $\Ch$ generated by a family $\left ( f_g^{(k)}\right )_{g\in G, k=0,1,\ldots,d}$ of commuting positive
contractions satisfying $\displaystyle
\sum_{g,k}f_g^{(k)} = 1$ and $f_g^{(k)}f_h^{(k)}=0$ whenever $g \neq h$. 
It admits an action $\gamma$ of $G$, determined by
$\gamma_g\big (f_h^{(k)} \big ) = f_{gh}^{(k)}$. We now take $X$ to be the Gelfand spectrum
of this $C^*$-algebra, which can be identified with a compact subset of the cube
$[0,1]^{\# G\cdot (d+1)}$. (In fact one can check that it is a finite cell
complex, but we make no use of this fact in this paper.) We claim that the action of $G$
on $X$, which we also call $\gamma$, must be free. To see that, we view the elements
$f_g^{(k)}$ as functions on $X$. Let $x \in X$. Pick $g,k\in G$ such that
$f_g^{(k)}(x)>0$. But if $h \in G \smallsetminus \{1\}$ then $f_g^{(k)}(\gamma_h(x)) =
f_{h^{-1}g}^{(k)}(x) = 0$ since $f_g^{(k)}f_{h^{-1}g}^{(k)}=0$. The claim is proved.
The statement of the lemma now follows immediately from Lemmas
\ref{Lemma:unital-central-sequence-reformulation-finite-group} and
\ref{Lemma:ecm-central-sequence}.
\end{proof}

\begin{Rmk}
The space $X$ can be computed explicitly, although we do not use it in this
paper. For example, if $G = \Z_2$, one can show that $X \cong S^d$, with the
action given by multiplication by $-1$. We omit the details.
\end{Rmk}

\begin{Lemma}
\label{Lemma:X-Rokhlin-implies-finite-Rokhlin-dim}
Let $\A$ be a unital separable $C^*$-algebra, let $G$ be a finite group, and let
$\alpha\colon G \to \aut(\A)$ be an action. Let $X$ be a compact free $G$-space with
covering dimension at most $d$. If $\alpha$ has the $X$-Rokhlin property then
$\dimrokct(\alpha) \leq d$.
\end{Lemma}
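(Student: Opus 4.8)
The plan is to reduce the statement to the construction of a genuine (exact) Rokhlin tower inside the commutative algebra $C(X)$ and then transport it into the central sequence algebra. Since $C(X)$ is nuclear and $\A$ is separable, Lemma \ref{Lemma:ecm-central-sequence} turns the $X$-Rokhlin property into the existence of a unital equivariant homomorphism $\Psi \colon C(X) \to \A_\infty \cap \A'$. If I can produce positive contractions $\left(f_g^{(l)}\right)_{l=0,\ldots,d;\, g\in G}$ in $C(X)$ satisfying the exact relations (1)--(4) of Lemma \ref{Lemma:unital-central-sequence-reformulation-finite-group} with respect to the geometric action $\gamma$ on $C(X)$, then applying $\Psi$ yields elements $\Psi(f_g^{(l)})$ in $\A_\infty \cap \A'$: a unital homomorphism preserves orthogonality, the partition-of-unity identity, and commutation, while equivariance of $\Psi$ converts $\gamma_h(f_g^{(l)}) = f_{hg}^{(l)}$ into $\overline{\alpha}_h(\Psi(f_g^{(l)})) = \Psi(f_{hg}^{(l)})$. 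Lemma \ref{Lemma:unital-central-sequence-reformulation-finite-group} then gives $\dimrokct(\alpha)\le d$. So the whole problem is to build the tower in $C(X)$.

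I would exploit that the action on $X$ is free: for a finite group acting freely on a compact Hausdorff space, the quotient map $\pi \colon X \to Y := X/G$ is a regular covering map with deck group $G$. As $\pi$ is a local homeomorphism and $Y$ is a finite union of closed sets each homeomorphic to a subset of $X$, the finite sum theorem for covering dimension gives $\dim Y \le \dim X \le d$.

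Next I would invoke the standard characterization of covering dimension (Ostrand's theorem): since $\dim Y \le d$, any finite open cover of $Y$ admits an open refinement $\mathcal W = \bigcup_{l=0}^{d} \mathcal W_l$ in which each family $\mathcal W_l = \{W_{l,j}\}_j$ consists of pairwise disjoint open sets. Applying this to a finite cover of $Y$ by evenly covered open sets, I may assume each $W_{l,j}$ lies in an evenly covered set, so that $\pi^{-1}(W_{l,j})$ is the disjoint union of $\# G$ sheets. Fixing one base sheet $\widetilde W_{l,j,1}$ and setting $\widetilde W_{l,j,g} = \gamma_g(\widetilde W_{l,j,1})$ labels the sheets by $G$ so that $\gamma_h(\widetilde W_{l,j,g}) = \widetilde W_{l,j,hg}$. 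Choosing a partition of unity $(\phi_{l,j})$ subordinate to $\mathcal W$, I lift each $\phi_{l,j}$ sheetwise: put $\psi_{l,j,g} = \phi_{l,j}\circ\pi$ on $\widetilde W_{l,j,g}$ and $0$ elsewhere, and set $f_g^{(l)} = \sum_j \psi_{l,j,g}$.

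It remains to verify the relations, which is where the disjointness built into $\mathcal W$ does the work. Within a fixed level $l$ the sets $W_{l,j}$ are disjoint, so their preimages, and hence the supports of the $\psi_{l,j,g}$, are disjoint; this makes each $f_g^{(l)}$ a positive contraction and forces $f_g^{(l)} f_h^{(l)} = 0$ for $g \neq h$ (for $g\ne h$ the relevant supports lie over disjoint sets or are distinct sheets of the same fiber). Summing the sheetwise lifts recovers $\phi_{l,j}\circ \pi$, so $\sum_{l,g} f_g^{(l)} = \big(\sum_{l,j}\phi_{l,j}\big)\circ\pi = 1$; equivariance $\gamma_h(f_g^{(l)}) = f_{hg}^{(l)}$ is immediate from $\gamma_h(\psi_{l,j,g}) = \psi_{l,j,hg}$ together with the $G$-invariance of $\pi$; and the commutation relation is automatic in $C(X)$. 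I expect the main obstacle to be organizational rather than conceptual: arranging the sheet labeling $G$-equivariantly and compatibly across all members of the cover, so that the translation relation holds exactly, and combining the evenly covered refinement with Ostrand's decomposition into $d+1$ disjoint families at the same time.
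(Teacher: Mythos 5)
Your proposal is correct and follows essentially the same route as the paper: bound $\dim(X/G)$ via the local homeomorphism and the finite (closed) sum theorem, take an evenly covered finite open cover, refine it to a $d$-decomposable one, lift a subordinate partition of unity sheetwise to build an exact equivariant Rokhlin partition of unity in $C(X)$, and push it forward through the equivariant unital homomorphism $\Psi\colon C(X)\to \A_{\infty}\cap \A'$ furnished by Lemma \ref{Lemma:ecm-central-sequence}. The organizational worry you flag at the end is resolved exactly as the paper does it: refine the evenly covered cover (refinements of evenly covered sets remain evenly covered) and define $f_g^{(l)}=\gamma_g\bigl(f_1^{(l)}\bigr)$, so the sheet labeling is handled by translating a single chosen lift rather than coordinating labels across the cover.
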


In order to prove the lemma, we recall the characterization of covering dimension in terms of decomposable covers. 

\begin{Def}
Let $X$ be a set. A family of subsets $(U_j)_{j \in I}$ is said to be \emph{$d$-decomposable} if there is a decomposition $I = \coprod_{k=0}^d I_k$ such that for any $k = 0,1,\ldots,d$ and any $j,j' \in I_k$, if $j \neq j'$ then $U_j \cap U_{j'} = \varnothing$.
\end{Def} 

\begin{Prop}{\cite[Proposition 1.5]{kirchberg-winter}}
Let $X$ be a normal topological space. The space $X$ has covering dimension at most $d$ if and only if every finite open cover of $X$ has a $d$-decomposable finite open refinement.
\end{Prop}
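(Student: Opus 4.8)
The plan is to prove the equivalence from the order characterization of covering dimension, which I take as my working definition: for a normal space~$X$, one has $\dim X \leq d$ precisely when every finite open cover admits a finite open refinement of order at most $d+1$, i.e.\ one in which no point of $X$ lies in more than $d+1$ members. With this in hand, the reverse implication is immediate. If $(U_j)_{j\in I}$ is $d$-decomposable with $I=\coprod_{k=0}^d I_k$, then a point $x$ lies in at most one $U_j$ from each class $I_k$, since distinct members of a single $I_k$ are disjoint; hence $x$ lies in at most $d+1$ of the sets, so a $d$-decomposable finite open cover automatically has order at most $d+1$. Thus if every finite open cover has a $d$-decomposable finite open refinement, then $\dim X \leq d$.

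For the forward implication, suppose $\dim X \leq d$ and fix a finite open cover $\mathcal U$. First I would pass to a finite open refinement $\mathcal V = (V_1,\dots,V_m)$ of order at most $d+1$, and let $N$ be its nerve, whose simplices are the subsets $S \subseteq \{1,\dots,m\}$ with $\bigcap_{i\in S} V_i \neq \varnothing$; the order bound forces $\dim N \leq d$. Using normality I would choose a partition of unity $(\phi_i)_{i=1}^m$ subordinate to $\mathcal V$, so $\supp \phi_i \subseteq V_i$, and form the associated continuous map $f\colon X \to |N|$, $f(x) = \sum_i \phi_i(x) e_i$, into the geometric realization; this is well defined because $\{i : \phi_i(x) > 0\}$ is a simplex of $N$.

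The heart of the argument is a coloring of the barycentric subdivision. Recall $|\mathrm{Sd}(N)| = |N|$, that a vertex of $\mathrm{Sd}(N)$ is a simplex $\sigma$ of $N$, and that a simplex of $\mathrm{Sd}(N)$ is a chain $\sigma_0 \subsetneq \cdots \subsetneq \sigma_r$. I color each vertex $\sigma$ by $\dim \sigma \in \{0,1,\dots,d\}$. Since dimensions strictly increase along a chain, no two distinct vertices of the same color lie in a common simplex of $\mathrm{Sd}(N)$; equivalently their open stars are disjoint. Pulling the open stars back through $f$ gives a finite open cover $\big(f^{-1}(\mathrm{st}(\sigma))\big)_\sigma$ of $X$, and grouping these sets by the color of $\sigma$ exhibits it as $d$-decomposable into the $d+1$ classes. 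To see it refines $\mathcal V$, note that if $f(x) \in \mathrm{st}(\sigma)$ then every vertex of $\sigma$ has positive barycentric coordinate in $f(x)$, so $\phi_{i_0}(x) > 0$ for any fixed $i_0 \in \sigma$, whence $x \in V_{i_0}$; and $V_{i_0}$ lies in some member of $\mathcal U$.

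The routine parts — existence of the subordinate partition of unity in a normal space, continuity and well-definedness of $f$, the identity $|\mathrm{Sd}(N)| = |N|$, and the standard fact that the open stars of two vertices meet if and only if the vertices span a common simplex — I would either cite or dispatch quickly. The genuine content, and the step I expect to be the main obstacle to state cleanly, is the barycentric coloring together with the refinement check: this is exactly the device that converts the numerical order bound $d+1$ into a partition of an open refinement into $d+1$ mutually disjoint families, while the carrier computation is what guarantees the new cover stays subordinate to $\mathcal V$ and hence to $\mathcal U$.
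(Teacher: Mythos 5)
Your proof is correct. Note that the paper itself offers no proof of this proposition --- it is quoted directly from Kirchberg--Winter as a known characterization of covering dimension --- and your argument (the trivial direction by observing that a $d$-decomposable cover has order at most $d+1$, and the forward direction via a refinement of order at most $d+1$, a subordinate partition of unity, the induced map to the nerve, and the coloring of the vertices of the barycentric subdivision by the dimension of the corresponding simplex) is exactly the classical dimension-theoretic argument behind the cited result, with all the key verifications (disjointness of open stars of same-color vertices, and the carrier computation showing the pulled-back stars refine the original cover) done correctly.
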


\begin{proof}[Proof of Lemma \ref{Lemma:X-Rokhlin-implies-finite-Rokhlin-dim}]
The quotient map $\pi\colon X \to
X/G$ is a local homeomorphism. Since the space $X/G$ is the image of $X$ under a local homeomorphism, it also has covering dimension at most $d$. (The space $X/G$ can be written as a union of finitely closed subsets, each of which is homeomorphic to a closed subspace of $X$, and thus its dimension is bounded above by the dimension of $X$ by \cite[Corollary 50.3]{Munkres-textbook}.) 
   Pick a finite open cover $(U_j)_{j=1,2,\ldots,n}$ of $X/G$ such that for any $j$, $\pi^{-1}(U_j)$ is homeomorphic to
$\# G$ disjoint copies of $U_j$, that is, for any $j$ there is an open subset $W_j \subseteq X$ such that 
$\pi|_{W_j}: W_j \to U_j$ is a homeomorphism and $\pi^{-1}(U_j) = \coprod_{g \in G} W_j\cdot g$.

 By passing to an open refinement, we may assume without loss of generality that the
cover $(U_j)_{j=1,2,\ldots,n}$ is $d$-decomposable. Pick a partition of unity
$(h_j)_{j=1,\ldots,n}$ of $X/G$ such that $\supp(h_j) \subseteq U_j$ for all $j$.

Since the cover $(U_j)_{j=1,2,\ldots,n}$
is $d$-decomposable, we can partition $\{1,2,\ldots,n\}$
into $d+1$ subsets $I_0,I_1,\ldots,I_d$ such that for any $k$ and any $j,j' \in I_k$, if  $j \neq j'$ then $U_j \cap U_{j'} = \varnothing$. In particular, for any $k$ and any $j,j' \in I_k$, if  $j \neq j'$ then $h_{j}h_{j'} = 0$. 

For $j=1,2,\ldots,n$ define $\widetilde{h}_j \in C(X)$ by 
$$
\widetilde{h}_j(x) = \left \{ 
\begin{matrix} 
h_j(\pi(x)) & \mid & x \in W_j \\
0 & \mid & \mathrm{otherwise} \, .
\end{matrix}
\right  .
$$
(It is easy to check that $\widetilde{h}_j$ is indeed continuous.)

Clearly, for any $k$ and any $j,j' \in I_k$, if $j \neq j'$ then $\widetilde{h}_j \widetilde{h}_{j'} = 0$.
  Define $f_1^{(k)}
= \sum_{j \in I_k} \widetilde{h}_j$. Denoting by $\gamma$ the action of $G$ on
$C(X)$, we now define $f_g^{(k)} = \gamma_g(f_1^{(k)})$. By our construction,
$f_g^{(k)}f_h^{(k)} = 0$ if $g \neq h$ and these functions form a partition of unity of
$X$. 

Let $\Psi\colon C(X) \to \A_{\infty} \cap \A'$ be an equivariant unital homomorphism. 
The elements $\Psi(f_g^{(k)})$ satisfy the conditions of Lemma
\ref{Lemma:unital-central-sequence-reformulation-finite-group}. Thus, $\dimrokct(\alpha) \leq d$ as required.
\end{proof}
 
\begin{Exl}	
\label{Exl:irrational-rotation-algebra}
Let $\theta \in (0,1)$ be an irrational number, and let $\A_{\theta}$ be the
irrational rotation algebra. Let $u$ and $v$ be the canonical unitary generators of
$\A_{\theta}$, satisfying $uv = e^{2\pi i \theta} vu$. Let $\alpha$ be the order
2 automorphism of $\A_{\theta}$ given by $\alpha(v)=v$ and $\alpha(u) = -u$, and
think of $\alpha$ as defining an action of $\Z_2$ on $\A_{\theta}$. We claim
that $\dimrokct(\alpha) = 1$.

To see that, let $(n_k)_{k \in \N}$ be a sequence of odd integers satisfying $\displaystyle
\lim_{k \to \infty}\textrm{dist} (n_k\theta,\Z) = 0$. Since
$vu^{n_k} = e^{2\pi i n_k \theta} uv$, one sees that for any $a \in \A_{\theta}$
we have  $\displaystyle \lim_{k \to \infty}\|[a,u^{n_k}]\| = 0$.
Since $n_k$ is odd, $\alpha(u^{n_k}) = - u^{n_k}$. Identifying the unitaries
$u^{n_k}$ with equivariant unital homomorphisms from $C(\T)$ to $\A_{\theta}$
(where the action on $\T$ is rotation by $\pi$), we see that $\dimrokct(\alpha)
\leq 1$ by Lemma \ref{Lemma:X-Rokhlin-implies-finite-Rokhlin-dim}.

However, $\alpha$ does not have the Rokhlin property (that is,
$\dimrokct(\alpha) \neq 0$). In fact, no action of any nontrivial finite group on $\A_{\theta}$ has the Rokhlin property. To see that, observe that any automorphism induces that identity map on $K_0$. Thus, if $\alpha\colon  G\to \aut(\A_{\theta})$ had the Rokhlin property, then there would be a family of projections $(p_g)_{g \in G}$ in $\A_{\theta}$, all of which have the same $K_0$ class, such that $\sum_{g \in G} p_g = 1$. Therefore, $[1]$ would be divisible by $\# G$, which is false. (For a more elaborate discussion of obstructions to the Rokhlin property, see \cite[Proposition 3.13]{Phillips-survey} and the surrounding discussion.)

A similar argument shows that the action of $\Z_p$ on $\A_{\theta}$ which fixes
$v$ and sends $u$ to $e^{2\pi i /p}u$ has Rokhlin dimension 1 with commuting towers. (An argument of a similar nature is used to show that certain actions of $\R$ on $\A_{\theta}$ have the Rokhlin property. See \cite[Proposition 2.5]{Kishimoto-flows}.)
\end{Exl}

We record the following straightforward lemma, without proof, for further use. 
\begin{Lemma}\label{L_3410_TPrdCt}
Let $G$ be a compact group,
let $\A,$ $\B,$ and $\Ch$ be unital $C^*$-algebras,
and let $\alpha \colon G \to \aut (\A),$
$\beta \colon G \to \aut (\B),$
and $\gamma \colon G \to \aut (\Ch)$
be actions of $G$ on $\A,$ $\B,$ and $\Ch.$
Suppose that $\A$  admits an approximate equivariant central unital homomorphism
from
 $\Ch.$
Then, for any $C^*$-tensor product for which the
diagonal action
$g \mapsto \alpha_g \otimes \beta_g$ of $G$ on $\A \otimes \B$
is defined,
$\A \otimes \B$
 admits an approximate equivariant central unital homomorphism from
$\Ch.$
\end{Lemma}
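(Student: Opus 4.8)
The plan is to compose a given approximate equivariant central unital homomorphism $Q \colon \Ch \to \A$ with the canonical unital inclusion $\iota \colon \A \to \A \otimes \B$, $\iota(a) = a \otimes 1_{\B}$, and to check that $\widetilde{Q} := \iota \circ Q$ witnesses the same property for the diagonal action on $\A \otimes \B$. The two structural facts I would use are that $\iota$ is an isometric unital $*$-homomorphism for any $C^*$-tensor norm, and that the diagonal action satisfies $(\alpha_g \otimes \beta_g) \circ \iota = \iota \circ \alpha_g$, since $\beta_g(1_{\B}) = 1_{\B}$. Given a finite set $F_0 \subseteq \Ch$, a finite set $F \subseteq \A \otimes \B$, and $\eps > 0$, I would first approximate each $z \in F$ to within some $\delta > 0$ by a finite sum of elementary tensors $\sum_{i} a_{i}^{(z)} \otimes b_{i}^{(z)}$, which is possible because the algebraic tensor product is dense. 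Collecting the finitely many factors $a_{i}^{(z)}$ into a finite set $F' \subseteq \A$, I would invoke the hypothesis to produce an $(F_0, F', \eps')$-equivariant central multiplicative map $Q \colon \Ch \to \A$ for a suitably small $\eps'$ to be determined, and set $\widetilde{Q} = \iota \circ Q$.

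Conditions (1) (approximate multiplicativity) and (3) (approximate equivariance) of Definition \ref{Def:ecm-map} are then inherited immediately: because $\iota$ is an isometric homomorphism, $\|\widetilde{Q}(xy) - \widetilde{Q}(x)\widetilde{Q}(y)\| = \|Q(xy) - Q(x)Q(y)\|$, and because $(\alpha_g \otimes \beta_g) \circ \iota = \iota \circ \alpha_g$, one has $\|\widetilde{Q}(\gamma_g(x)) - (\alpha_g \otimes \beta_g)(\widetilde{Q}(x))\| = \|Q(\gamma_g(x)) - \alpha_g(Q(x))\|$. Both are $< \eps$ as soon as $\eps' \le \eps$. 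Unitality of $\widetilde{Q}$ is immediate, and complete positivity follows since $\widetilde Q$ is a composition of completely positive maps.

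The only real work is the centrality condition (2). For an elementary tensor one computes $[\widetilde{Q}(x), a \otimes b] = [Q(x), a] \otimes b$, and since every $C^*$-tensor norm is a cross norm, $\|[\widetilde{Q}(x), a \otimes b]\| = \|[Q(x), a]\| \cdot \|b\|$. For a general $z \in F$, writing $z \approx \sum_i a_i \otimes b_i$ within $\delta$, one gets $\|[\widetilde{Q}(x), z] - \sum_i [Q(x), a_i] \otimes b_i\| \le 2\|\widetilde{Q}(x)\| \, \delta \le 2\delta$, using that $\widetilde{Q}$ is contractive and assuming without loss of generality that $\|x\| \le 1$, while $\|\sum_i [Q(x), a_i] \otimes b_i\| \le \sum_i \|[Q(x), a_i]\| \, \|b_i\| \le \eps' \sum_i \|b_i\|$. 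Choosing $\delta$ and $\eps'$ small relative to $\eps$ and to the finitely many norms $\|b_i\|$ and term counts occurring across all $z \in F$ then yields $\|[\widetilde{Q}(x), z]\| < \eps$.

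The main obstacle is exactly this last step, since a general element of $\A \otimes \B$ is not an elementary tensor and hence need not almost commute with $Q(x) \otimes 1_{\B}$ merely because $Q(x)$ almost commutes with elements of $\A$. The resolution is the standard reduction to elementary tensors via density of the algebraic tensor product, combined with the cross-norm identity $\|c \otimes b\| = \|c\| \, \|b\|$ valid for every $C^*$-norm; everything else is routine bookkeeping in the choice of $\eps'$ and $\delta$. This is presumably why the authors state the lemma without proof.
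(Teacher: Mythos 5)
Your argument is correct and is exactly the straightforward verification the authors had in mind when they recorded this lemma without proof: compose $Q$ with $a \mapsto a \otimes 1_{\B}$, note that conditions (1) and (3) of Definition \ref{Def:ecm-map} transfer verbatim because this inclusion is an isometric equivariant unital homomorphism, and handle centrality by approximating elements of $F$ by sums of elementary tensors and using the cross-norm identity. The only cosmetic point is the order of quantifiers at the end: $\delta$ should be fixed first, depending only on $\eps$ and $\max_{x \in F_0} \|x\|$, and then $\eps'$ chosen after the decompositions (and hence the quantities $\sum_i \|b_i^{(z)}\|$) are in hand; as written you let $\delta$ depend on data produced by the $\delta$-approximation, but this is easily untangled and does not affect correctness.
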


We now extend the definition of finite Rokhlin dimension for actions of finite
groups and of a single automorphism to the nonunital case. This definition will
be sufficient for extending the permanence properties from \cite{HWZ} to the
nonunital setting. We begin with the finite group case.

\begin{Def}
\label{def: nonunital Rokhlin finite groups}
Let $G$ be a finite group, let $\A$ a $C^*$-algebra and let $\alpha \colon  G \to \textup{Aut}
(\A)$ an action of $G$ on $\A$.
We say that $\alpha$ has \emph{Rokhlin dimension $d$ with commuting towers}, and
write $\dimrokct(\alpha)=d$, if $d$ is the least integer such that the following
holds: for any 
$\eps >0$ and every finite subset $F \subset \A$ there is a family $\left( f^{(l)}_g \right)_{l=0,1,\ldots,d\, ; g \in G}$ of positive contractions in $\A$
such that:
\begin{enumerate}
\item 
\label{def-finite-group-ortho}
$\left\|f_{g}^{(l)}f_{h}^{(l)}a\right\|< \eps$ for  $l=0,1,\ldots,d$, any $a \in \A$, and any $g, h$ in $G$ with $g \neq h$.
\item 
\label{def-finite-group-unit}
$\left\| \left ( \displaystyle \sum_{l=0}^{d}\sum_{g \in G} 
f_{g}^{(l)}\right )a - a \right\|<\eps$ for $a\in F$.
\item 
\label{def-finite-group-central}
$\left\|\big[f_{g}^{(l)},a \big]\right\| < \eps$ for $l \in \{ 0,1
,\ldots, d\}$, $g\in G$, and $a \in F$.
\item 
\label{def-finite-group-permuted}
$\left\|\left ( \alpha_h \Big(f_{g}^{(l)}\Big) - f_{hg}^{(l)} \right ) a \right\| <\eps$ for
 $l \in \{ 0 ,1,\ldots, d\}$, $a \in F$, and 
$g,h \in G$.
\item
\label{commuting tower assumption}
 $\left\| \left [ f_{g}^{(l)},f_{h}^{(k)} \right ]a\right\|< \eps$ for $k,l
\in \{ 0 ,1,\ldots, d\}$, $a \in F$, and $g,h\in G$.
\end{enumerate} 
\end{Def}
\begin{Def}
In the notation of Definition \ref{def: nonunital Rokhlin finite groups}, given
$F \subseteq \A$ finite and $\eps>0$, we call a family $\left(
f^{(l)}_g \right)_{l=0,1,\ldots,d\, ; g \in G}$  of positive elements satisfying the conditions of
Definition \ref{def: nonunital Rokhlin finite groups} with respect to the given
$F$ and $\eps$ a $(d,F,\eps)$-\emph{Rokhlin system}. 
\end{Def}

As in the unital case, we have the following equivalent reformulation using the
central sequence algebra.

\begin{Lemma}
\label{Lemma:central-sequence-reformulation-finite-group}
Let $G$ be a finite group, let $\A$ be a separable $C^*$-algebra, and let $\alpha \colon  G \to
\textup{Aut} (\A)$ be an action of $G$ on $\A$.
Then $\dimrokct(\alpha)=d$ if and only if $d$ is the least integer such that the
following holds:  there is a family 
$\left( f^{(l)}_g \right)_{l=0,1,\ldots,d\, ; g \in G}$ of positive contractions  in $\A_{\infty} \cap
\A'$
such that
\begin{enumerate}
\item 
\label{central-sequence-finite-group-ortho}
$f_{g}^{(l)}f_{h}^{(l)} a = 0$ for  $l=0,1,\ldots,d$, any $a \in \A$, and any $g, h \in G$ with $g \neq h$.
\item $\left ( \displaystyle \sum_{l=0}^{d}\sum_{g \in G}  f_{g}^{(l)}\right )a
= a$ for all $a\in \A$.
\item $\overline{\alpha}_h \Big(f_{g}^{(l)}\Big)a = f_{hg}^{(l)}a$ for all $l \in \{ 0,1
,\ldots, d\}$, $a \in \A$, and 
$g \in G$.
\item $\left [ f_{g}^{(l)},f_{h}^{(k)} \right ]a =0$ for any $k,l \in \{ 0,1
,\ldots, d\}$, any $a \in \A$, and any $g,h\in G$.
\end{enumerate} 
\end{Lemma}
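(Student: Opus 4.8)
The plan is to run the separable reindexing argument used (and omitted) for the unital Lemma~\ref{Lemma:unital-central-sequence-reformulation-finite-group}, now carrying along the $a$-weighted versions of the conditions appropriate to the nonunital setting. For a fixed nonnegative integer $d$, write $P(d)$ for the statement ``for every $\eps>0$ and every finite $F\subseteq\A$ there is a $(d,F,\eps)$-Rokhlin system'' and $Q(d)$ for the statement ``there is a family $\bigl(f^{(l)}_g\bigr)$ of positive contractions in $\A_{\infty}\cap\A'$ satisfying the four conditions listed in the statement.'' By Definition~\ref{def: nonunital Rokhlin finite groups}, $\dimrokct(\alpha)$ is the least $d$ with $P(d)$, while the right-hand side of the lemma is the least $d$ with $Q(d)$; so it suffices to prove $P(d)\Leftrightarrow Q(d)$ for each fixed $d$, since then the two sets of admissible $d$ coincide and hence so do their minima. (Both $P$ and $Q$ are also preserved under increasing $d$ by padding with zero towers, but this is not needed for the reduction.)

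For the forward implication $P(d)\Rightarrow Q(d)$, I would fix an increasing sequence of finite sets $F_n\subseteq\A$ whose union is dense, put $\eps_n=1/n$, choose a $(d,F_n,\eps_n)$-Rokhlin system $\bigl(f^{(l)}_g(n)\bigr)$ for each $n$, and let $f^{(l)}_g$ be the class of $\bigl(f^{(l)}_g(n)\bigr)_n$ in $\A_{\infty}$; these are positive contractions since each entry is one. Condition~(\ref{def-finite-group-central}) of Definition~\ref{def: nonunital Rokhlin finite groups} gives $\bigl\|[f^{(l)}_g(n),a]\bigr\|\to0$ for every $a$ in the dense union, and the estimate $\bigl\|[f^{(l)}_g,a]\bigr\|\le\bigl\|[f^{(l)}_g,a']\bigr\|+2\|a-a'\|$ upgrades this to all $a\in\A$, so $f^{(l)}_g\in\A_{\infty}\cap\A'$. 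The four relations are verified the same way: each coordinatewise $a$-weighted estimate tends to $0$ for $a$ in the dense union, giving the exact relation there, and the bounds $\bigl\|f^{(l)}_g f^{(l)}_h\bigr\|\le1$ and $\bigl\|\sum_{l,g}f^{(l)}_g\bigr\|\le(d+1)\#G$ let one pass from a dense set of $a$'s to all of $\A$ by the same density-and-contractivity step.

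For the converse $Q(d)\Rightarrow P(d)$, I would lift each $f^{(l)}_g$ to a sequence of positive contractions $\bigl(x^{(l)}_g(n)\bigr)_n$ in $\A$ (a positive contraction in a quotient lifts to a positive contraction, applied coordinatewise; alternatively a Choi--Effros lift as in the proof of Lemma~\ref{Lemma:ecm-central-sequence}). Recalling that $\overline{\alpha}_h$ is induced coordinatewise by $\alpha_h$, the four relations together with membership in $\A'$ translate into the coordinatewise convergences $\bigl\|x^{(l)}_g(n)x^{(l)}_h(n)a\bigr\|\to0$, $\bigl\|\bigl(\sum_{l,g}x^{(l)}_g(n)\bigr)a-a\bigr\|\to0$, $\bigl\|\alpha_h\bigl(x^{(l)}_g(n)\bigr)a-x^{(l)}_{hg}(n)a\bigr\|\to0$, $\bigl\|\bigl[x^{(l)}_g(n),x^{(k)}_h(n)\bigr]a\bigr\|\to0$, and $\bigl\|[x^{(l)}_g(n),a]\bigr\|\to0$, each for every fixed $a\in\A$. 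Given a finite $F$ and $\eps>0$, only finitely many $a\in F$ and finitely many index combinations occur, so a single large $n$ makes all of these smaller than $\eps$ at once, and $\bigl(x^{(l)}_g(n)\bigr)$ is then the required $(d,F,\eps)$-Rokhlin system.

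The point needing genuine care, and the only place the nonunital case differs from Lemma~\ref{Lemma:unital-central-sequence-reformulation-finite-group}, is the orthogonality condition~(\ref{def-finite-group-ortho}). In $\A_{\infty}$ the relation $f^{(l)}_g f^{(l)}_h a=0$ only forces the product to annihilate $\A$, not to vanish, so the finite-stage orthogonality must be read as tested against $a\in F$ (and upgraded to all of $\A$ in the limit by the density-and-contractivity step above) rather than as a norm bound on $f^{(l)}_g(n)f^{(l)}_h(n)$. Should one insist on exact or norm orthogonality at each finite stage, it can be arranged by a standard functional-calculus perturbation of the lifts $x^{(l)}_g(n)$, using that $\sum_{l,g}x^{(l)}_g(n)$ acts as an approximate unit on $F$, without disturbing the other estimates. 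Everything else is routine bookkeeping with $\eps$'s.
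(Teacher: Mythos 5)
Your proof is correct and is precisely the standard sequence-algebra reindexing and lifting argument that the paper omits (the lemma is stated only by analogy with the unital case, whose proof is likewise omitted as a ``straightforward exercise''). You also correctly isolate the one genuinely delicate point in the nonunital setting --- that the finite-stage orthogonality condition must be read as tested against the finite set $F$ rather than literally against all of $\A$, and that exact orthogonality of the lifts can be arranged if desired, consistent with Remark~\ref{Rmk:finite-group-ortho} --- so nothing further is needed.
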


\begin{Rmk}
\label{Rmk:central-sequence-reformulation-finite-group-Moreover}
 With the notation of Lemma \ref{Lemma:central-sequence-reformulation-finite-group} above, if $\dimrokct(\alpha)=d$ and $\B \subseteq \A_{\infty}$ is any separable subset, then the family $\left( f^{(l)}_g \right)_{l=0,1,\ldots,d\, ; g \in G}$ can in addition be chosen to satisfy
$$
f^{(l)}_gb = b f^{(l)}_g
$$
for all $b \in \B$, for $l=0,1,\ldots,d$, and for all $g \in G$.

This is shown using a standard diagonalization method.
\end{Rmk}

\begin{Rmk}
\label{Rmk:finite-group-ortho}
Condition (\ref{central-sequence-finite-group-ortho}) in Lemma \ref{Lemma:central-sequence-reformulation-finite-group} can be strengthened to require that $f_g^{(l)}f_h^{(l)} = 0$, rather than obtaining $0$ only after multiplying by an element of $\A$. To see this, let $\left ( f_g^{(l)} \right )_{l=0,1,\ldots,d \, ;  g \in G}$ be a system in $\A_{\infty} \cap \A'$ as in the lemma. The annihilator $\mathrm{Ann}(\A)$ is an ideal in $\A_{\infty} \cap \A'$. Let $\pi \colon \A_{\infty} \cap \A' \to \A_{\infty} \cap \A' / \mathrm{Ann}(\A)$ be the quotient map. Any system of contractive lifts of $\left ( \pi(f_g^{(l)}) \right )_{l=0,1,\ldots,d \, ;  g \in G}$ to $\A_{\infty} \cap \A'$ will satisfy the conditions of Lemma \ref{Lemma:central-sequence-reformulation-finite-group} as well. Since orthogonal contractions can be lifted to orthogonal contractions (the cone over $\C^n$ is projective), we can choose Rokhlin elements with this added orthogonality condition. Likewise, one shows that strengthening condition (\ref{def-finite-group-ortho}) in Definition \ref{def: nonunital Rokhlin finite groups} to require that $f_g^{(l)}f_h^{(l)} = 0$ gives an equivalent definition.
\end{Rmk}

We record the following simple observation. The proof is immediate.
\begin{Lemma}
\label{Lemma:finite-subgroup}
Let $G$ be a finite group, let $\A$ be a $C^*$-algebra, and let $\alpha \colon G \to \aut(\A)$ be an action with $\dimrokct(\alpha)\leq d$. For any subgroup $H < G$ we have $\dimrokct(\alpha|_{H}) \leq d$.
\end{Lemma}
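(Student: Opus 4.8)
The plan is to manufacture Rokhlin towers for $\alpha|_H$ directly from towers for $\alpha$, re-indexing by $H$ in a way that keeps the number of towers at $d+1$. The mechanism is to sum the given Rokhlin elements over a transversal for the cosets of $H$. Concretely, I would fix representatives $g_1, \ldots, g_m$ for the right cosets $H\backslash G$, so that $G = \coprod_{j=1}^m H g_j$ and every $g \in G$ is uniquely of the form $h g_j$ with $h \in H$ and $j \in \{1, \ldots, m\}$; note that left multiplication by any $h' \in H$ preserves each coset $H g_j$.

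Given $\eps > 0$ and a finite set $F \subseteq \A$, I would use $\dimrokct(\alpha) \leq d$ to produce a $(d, F, \eps_0)$-Rokhlin system $\big(f_g^{(l)}\big)_{l=0,\ldots,d\,;\, g \in G}$ with $\eps_0 = \eps/(\# G)^2$, taking the orthogonality to be exact (as permitted by Remark \ref{Rmk:finite-group-ortho}), so that $f_g^{(l)} f_{g'}^{(l)} = 0$ whenever $g \neq g'$. Using the same levels $l = 0, 1, \ldots, d$, I would then set, for $h \in H$,
$$
F_h^{(l)} = \sum_{j=1}^m f_{h g_j}^{(l)} \, ,
$$
and check the conditions of Definition \ref{def: nonunital Rokhlin finite groups} for $\alpha|_H$. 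For fixed $l$ the summands $f_{h g_1}^{(l)}, \ldots, f_{h g_m}^{(l)}$ have distinct indices, hence are pairwise orthogonal, so each $F_h^{(l)}$ is a genuine positive contraction. The partition-of-unity condition is inherited verbatim, since $\sum_l \sum_{h \in H} F_h^{(l)} = \sum_l \sum_{g \in G} f_g^{(l)}$. For the equivariance condition, applying the $G$-equivariance termwise gives $\alpha_{h'}\big(F_h^{(l)}\big) \approx \sum_j f_{h' h g_j}^{(l)} = F_{h'h}^{(l)}$ for $h, h' \in H$, with error at most $m\,\eps_0$. Centrality and the commuting-tower condition pass to the finite sums, picking up a factor of at most $m$ and $m^2$ respectively; since $m = [G:H] \leq \# G$, the choice $\eps_0 = \eps/(\# G)^2$ keeps every estimate below $\eps$. (Working instead in $\A_\infty \cap \A'$ via Lemma \ref{Lemma:central-sequence-reformulation-finite-group} when $\A$ is separable would make all of these relations exact.)

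The one step that is not purely formal is the orthogonality across distinct elements of $H$: for $h \neq h'$ in $H$ I must see that $F_h^{(l)} F_{h'}^{(l)} = 0$, i.e.\ that $f_{h g_j}^{(l)} f_{h' g_{j'}}^{(l)} = 0$ for all $j, j'$. This holds because $h g_j = h' g_{j'}$ would force $g_{j'} \in H g_j$, hence $j = j'$ and then $h = h'$; so the indices $h g_j$ and $h' g_{j'}$ are always distinct in $G$ and the $G$-orthogonality applies. This disjointness of right cosets is precisely what both guarantees orthogonality within each level and, crucially, allows the whole coset to be collapsed into a single $H$-indexed family, so that the restricted system uses only $d+1$ towers rather than $(d+1)[G:H]$. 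With the $H$-system exhibited, $\dimrokct(\alpha|_H) \leq d$ follows.
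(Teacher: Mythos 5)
Your argument is correct: summing the $G$-indexed Rokhlin elements over a transversal for the right cosets $H\backslash G$ is exactly the re-indexing needed, and your verification of orthogonality via the uniqueness of the decomposition $g = hg_j$ is the only non-formal point. The paper omits the proof entirely (``The proof is immediate''), and your construction is the standard argument it has in mind, so there is nothing to compare beyond noting that your write-up fills in that omission correctly.
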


\begin{Lemma}
\label{Lemma:Rokhlin-pointwise-outer}
Let $G$ be a finite group, let $\A$ be a $C^*$-algebra, and let $\alpha \colon G \to \aut(\A)$ be an action with $\dimrokct(\alpha)<\infty$. Then the action $\alpha$ is pointwise outer.
\end{Lemma}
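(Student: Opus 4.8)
The plan is to prove the contrapositive: if $\alpha_{g_0}$ is inner for some $g_0 \in G \setminus \{1\}$, then $\dimrokct(\alpha) = \infty$. So I would fix such a $g_0$, write $\alpha_{g_0} = \mathrm{Ad}(u)$ for a unitary $u \in M(\A)$, assume $\A \neq 0$ (the case $\A = 0$ being vacuous), and fix a positive $a_0 \in \A$ with $\|a_0\| = 1$. The conceptual point is that in any Rokhlin system the elements indexed by $h$ and by $g_0 h$ must be nearly orthogonal, since $g_0 h \neq h$; but conjugation by $u$ simultaneously implements $\alpha_{g_0}$ (sending the $h$-tower to the $g_0 h$-tower, by condition (\ref{def-finite-group-permuted})) and acts almost trivially on the Rokhlin elements, because they almost commute with $\A$. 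These two facts are incompatible unless every Rokhlin element nearly annihilates $a_0$, which contradicts the approximate-unit condition (\ref{def-finite-group-unit}).

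To make this precise I would fix $d$ with $\dimrokct(\alpha) \le d$, set $N = (d+1)\cdot \# G$, choose $\eps > 0$ so small that $2N\sqrt{\eps} + \eps < 1$ (e.g.\ $\eps = (16N^2)^{-1}$), and take the finite set $F = \{a_0, \, u^* a_0\} \subseteq \A$. Suppose, toward a contradiction, that a family $(f_h^{(l)})$ of positive contractions satisfying Definition \ref{def: nonunital Rokhlin finite groups} for this $F$ and $\eps$ exists. Writing $f = f_h^{(l)}$ and $f' = f_{g_0 h}^{(l)}$, condition (\ref{def-finite-group-permuted}) gives $\|(u f u^* - f')a_0\| < \eps$, since $\alpha_{g_0}(f) = u f u^*$. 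The decisive maneuver — and the one place where the argument is delicate — is to absorb the multiplier $u$ into $F$: because $u^* a_0 \in F$ and $a_0 \in F$, condition (\ref{def-finite-group-central}) controls the relevant commutators, and using $u u^* = 1$ one gets
$$ \big\| u f u^* a_0 - f a_0 \big\| \leq \big\| [f, u^* a_0] \big\| + \big\| [a_0, f] \big\| < 2\eps . $$
Combining the two estimates yields $\|f' a_0 - f a_0\| < 3\eps$, i.e.\ the $h$- and $g_0 h$-towers agree to within $3\eps$ against $a_0$.

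Now I would bring in orthogonality. Since $g_0 h \neq h$, condition (\ref{def-finite-group-ortho}) gives $\|f f' a_0\| < \eps$, and replacing $f' a_0$ by $f a_0$ (at a cost of $3\eps$, as $\|f\| \le 1$) yields $\|f^2 a_0\| < 4\eps$. As $f \ge 0$ and $a_0 \ge 0$, we have $\|f a_0\|^2 = \|a_0 f^2 a_0\| \le \|f^2 a_0\| < 4\eps$, so $\|f a_0\| < 2\sqrt{\eps}$ for every index $(h,l)$. Summing the $N$ Rokhlin elements and invoking condition (\ref{def-finite-group-unit}) gives
$$ 1 = \| a_0 \| \leq \left\| \left( \sum_{l,h} f_h^{(l)} \right) a_0 \right\| + \eps < 2 N \sqrt{\eps} + \eps < 1 , $$
a contradiction. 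Hence no such system exists for this $F$ and $\eps$, contradicting $\dimrokct(\alpha) \le d$; therefore $\alpha_{g_0}$ is not inner.

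The main obstacle, as flagged above, is that a unitary implementing $\alpha_{g_0}$ a priori lives in $M(\A)$ rather than in $\A$, so the near-centrality hypothesis — which only constrains commutators with elements of $\A$ — does not directly say that $f$ almost commutes with $u$. Placing $u^* a_0$, which does lie in $\A$, into $F$ is exactly what sidesteps this. Everything else is bookkeeping with the defining inequalities, and, since the argument works directly from Definition \ref{def: nonunital Rokhlin finite groups}, it requires no separability assumption and so applies to the general $\A$ in the statement.
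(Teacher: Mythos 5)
Your proof is correct and follows essentially the same strategy as the paper's: implement the inner automorphism by $u \in M(\A)$, absorb $u^*a$ into the finite set so that near-centrality controls conjugation by $u$, and then play the permutation condition against tower orthogonality to contradict the approximate-unit condition. The only differences are bookkeeping (the paper uses a $G$-invariant $a$ and the set $\{a^{1/2}, u^*a^{1/2}, a^{1/2}u\}$, and runs the pigeonhole in the opposite direction by locating one index with $\|af_g^{(l)}\|$ large rather than showing all $\|f_h^{(l)}a_0\|$ are small).
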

\begin{proof}
 Suppose not. Let $h$ be a nontrivial element of $G$, and suppose that there is a unitary $u \in M(\A)$ such that $\alpha_h(a) = uau^*$ for all $a \in \A$. Let $a$ be a nonzero $G$-invariant positive element in $\A$ of norm $1$. Let $d=\dimrokct(\alpha)$. Fix $\eps>0$ such that
$$
\left ( \frac{1-\eps}{(d+1) \cdot \# G} \right )^2 > 5\eps \; .
$$
 Let $(f_g^{(l)})_{l=0,1,\ldots,d \, ; g \in G}$ be a Rokhlin system for the finite set $\{a^{1/2},u^*a^{1/2},a^{1/2}u\}$ and $\eps$. Since 
$$
\left \|\left ( \sum_{g \in G} \sum_{l=0}^d f_g^{(l)} \right ) a - a \right \| < \eps \, ,
$$
there exist $g \in G$ and $l \in \{0,1,\ldots d\}$ such that $\|af_g^{(l)}\| > \frac{1-\eps}{(d+1) \cdot \# G}$.
We have 
$$
\left \|a^{1/2}uf_g^{(l)}u^*a^{1/2} - af_g^{(l)} \right \|<2\eps
\quad {\mbox{and}}\quad 
\left \| a^{1/2}uf_g^{(l)}u^*a^{1/2} - af_{hg}^{(l)} \right \|<2\eps
\, .
$$
Thus 
$$
\left ( \frac{1-\eps}{(d+1) \cdot \# G} \right )^2 <
\left \|(af_g^{(l)})(af_g^{(l)})^* \right \| \leq 
\left \| af_g^{(l)}f_{hg}^{(l)}a \right \| + 4\eps \leq
5\eps \, ,
$$
which is a contradiction.
\end{proof}

Now we consider the case of a single automorphism.

\begin{Def} \label{nonunital Rokhlin-dim-single-auto}
Let $\A$ be a $C^*$-algebra and $d \in \N$. An automorphism $\alpha$ of $\A$ is
said to have \emph{Rokhlin dimension  
 $d$ with commuting towers} if $d$ is the least integer such that the following
holds: for any finite set $F \subset \A$, 
any $p>0$, and any $\eps>0$, there are positive elements 
$$
f_{0,0}^{(l)}\, , f_{0,1}^{(l)}\, ,\ldots,f_{0,\, p-1}^{(l)}\;\; \mathrm{and} \;\; f_{1,0}^{(l)} \, , f_{1,1}^{(l)} \, , \ldots,f_{1,\, p}^{(l)}
$$ 
for $l =
0,1,\ldots,d$ such that:
\begin{enumerate}
\item 
\label{def-single-auto-ortho}
$\|f_{q,k}^{(l)}f_{r,j}^{(l)}a\|<\eps$ for any $a \in F$, $l=0,1,\ldots,d$, for $q,r = 0,1$, for  $k = 0,1,\ldots,p-1+q$ and $j=0,1,\ldots,p-1+r$ with $(q,k) \neq
(r,j)$.
\item $\left\|\left ( \displaystyle \sum_{l=0}^{d} \left [
\sum_{j=0}^{p-1}
f_{0,j}^{(l)} + \sum_{j=0}^{p}
f_{1,j}^{(l)} 
\right ]
\right ) a - a \right\|<\eps$ for all $a \in F$.
\item $\left\|[f_{r,j}^{(l)},a]\right\| < \eps$ for  $l=0,1,\ldots,d$, for $r=0,1$,  $j=0,1,\ldots,p-1+r$ and for $a \in F$.
\item $\left\|\left ( \alpha(f_{r,j}^{(l)}) - f_{r,j+1}^{(l)} \right ) a\right\|<\eps$ for 
$l=0,1,\ldots,d$, for $r = 0,1$, for $j=0,1,\ldots,p-2+r$ and for all $a \in F$.
\item  $\left\|\left ( \alpha(f_{0,p-1}^{(l)} + f_{1,p}^{(l)}) - (f_{0,0}^{l} +
f_{1,0}^{l}) \right ) a\right\|<\eps$ for  $l=0,1,\ldots,d$ and for all $a \in F$.
\item $\|[f_{q,k}^{(l)},f_{r,j}^{(m)}]a\|< \eps$ for all $a\in F$, for $l,m = 0,1,\ldots,d$, for $q,r = 0,1$,  for $k = 0,1,\ldots,p-1+q$ and for $j=0,1,\ldots,p-1+r$.
\end{enumerate} 
We write in this case $\dimrokct(\alpha)=d$.

We refer to each sequence
$f_{r,0}^{(l)},f_{r,1}^{(l)},f_{r,2}^{(l)},\ldots$ or to $\left(f_{r,j}^{(l)}
\right)_{j=0,1,\ldots,p-1+r}$ as a \emph{tower}, to the length of the sequence as the \emph{height}
of the tower, and to the 
pair of towers for $r=0,1$ as a double tower. If the double tower satisfies the
conditions with respect to a given $(d,F,\eps)$, we  refer to those
elements as a \emph{$(d,F,\eps)$-double tower of height $p$}.
\end{Def}

\begin{Exl} Rokhlin dimension zero for automorphisms of nonunital $C^*$-algebras coincides with the definition of the Rokhlin property for nonunital $C^*$-algebras from \cite[Definition 1.2]{brown-hirshberg}). (Formally, the definition of the Rokhlin property in \cite{brown-hirshberg} is slightly stronger: it is reformulated as in Lemma \ref{Lemma:central-sequence-reformulation-single-auto}, except that instead of item (\ref{Lemma:central-sequence-reformulation-single-auto:orthogonality}), the elements in question are required to be orthogonal even without multiplying by an element from $\A$; however, those definitions are equivalent --- see Remark \ref{Rmk:single-auto-ortho} below.)

Such automorphisms can arise from endomorphisms which satisfy the Rokhlin property. To give a concrete example, we review the representation of $\Oh_n$ as a
corner in a crossed product, from \cite[Section 2]{Cuntz}. Consider $M_{n^{\infty}} \cong
M_n \otimes M_n \otimes \cdots$. Let $e \in M_n$ be a fixed minimal projection.
Let $\alpha\colon M_{n^{\infty}} \to M_{n^{\infty}}$ be the nonunital endomorphism
given by $\alpha(a_1 \otimes a_2 \otimes \cdots) = e \otimes a_1 \otimes a_2
\otimes \cdots$. Let $\widetilde{\alpha}$ be the induced automorphism on the
stationary inductive limit $\K \otimes M_{n^{\infty}} \cong
\underset{\longrightarrow}{\lim}(M_{n^{\infty}},\alpha)$. One can check that
$\widetilde{\alpha}$ has Rokhlin dimension $0$ (see \cite[Proposition 2.2]{brown-hirshberg}). It follows then from Theorem
\ref{Thm:permanence-dimension-single-auto} below that the crossed product, which is isomorphic to $\Oh_n
\otimes \K$, has finite nuclear dimension. The bound for nuclear dimension given in the statement
of the theorem is 3. However when $n$ is even, one can obtain single Rokhlin towers of height
$2^k$ (see \cite[Proposition 4.1 and Remark 4.3]{BSKR}), and therefore the proof of the theorem can in fact be used to yield
nuclear dimension 1. Since finite nuclear dimension passes to hereditary
subalgebras, the same holds for $\Oh_n$ as well. That $\Oh_n$ has finite nuclear
dimension was shown in \cite[Theorem 7.4]{winter-zacharias} using a
different argument not involving the Rokhlin property, from which it follows (\cite[Theorem 7.5]{winter-zacharias}) that the same holds for general Kirchberg algebras satisfying the UCT. (The bound on the nuclear dimension for such algebras was improved recently; see \cite{enders,rss-nuc-dim}.) 
The construction of Kirchberg algebras as corners of crossed products of AF algebras by automorphisms with Rokhlin dimension 0 can be carried out in greater generality. It was shown in \cite[Theorem 3.1 and Corollary 4.6]{rordam95} that for any pair of abelian groups $G_0$ and $G_1$ with $G_1$ torsion free and any $g_0 \in G_0$ one can obtain in this way a Kirchberg algebra $A$ with $(K_0(A),[1],K_1(A)) \cong (G_0,g_0,G_1)$.
\end{Exl}

As in the case of a finite group action, we can reformulate Rokhlin dimension for a single automorphism in terms of the central sequence algebra.

\begin{Lemma}
\label{Lemma:central-sequence-reformulation-single-auto}
Let $\A$ a separable $C^*$-algebra and let $\alpha \in \aut (\A)$.
Then $\dimrokct(\alpha)=d$ if and only if $d$ is the least integer such that the
following holds:  for any integer $p>0$ there are positive contractions 
$$
f_{0,0}^{(l)},f_{0,1}^{(l)},\ldots,f_{0,p-1}^{(l)}, f_{1,0}^{(l)}, f_{1,1}^{(l)},\ldots,f_{1,p}^{(l)}
$$ 
for $l = 0,1,\ldots,d$
in $ \A_{\infty} \cap \A'$
such that:
\begin{enumerate}
\item 
\label{Lemma:central-sequence-reformulation-single-auto:orthogonality}
$f_{q,k}^{(l)}f_{r,j}^{(l)}a=0$ for any $a \in \A$, for $l=0,1,\ldots,d$, for  $q,r = 0,1$,  for $k = 0,1,\ldots,p-1+q$ and $j=0,1,\ldots,p-1+r$ with $(q,k) \neq
(r,j)$.
\item $\left ( \displaystyle \sum_{l=0}^{d} \left [
\sum_{j=0}^{p-1}
f_{0,j}^{(l)} + \sum_{j=0}^{p}
f_{1,j}^{(l)} 
\right ]
\right ) a = a$ for all $a \in \A$.
\item $\left ( \overline{\alpha}(f_{r,j}^{(l)}) - f_{r,j+1}^{(l)} \right ) a=0$ for 
$l=0,1,\ldots,d$, for $r = 0,1$, for $j=0,1,\ldots,p-2+r$, and for all $a \in \A$.
\item  $\left ( \overline{\alpha}(f_{0,p-1}^{(l)} + f_{1,p}^{(l)}) - (f_{0,0}^{l} +
f_{1,0}^{l}) \right ) a=0$ for  $l=0,1,\ldots,d$ and for all $a \in \A$.
\item $[f_{q,k}^{(l)},f_{r,j}^{(m)}]a=0$ for  $a\in \A$, for $l,m = 0,1,\ldots,d$, for $q,r = 0,1$,  for $k = 0,1,\ldots,p-1+q$, and for $j=0,1,\ldots,p-1+r$.
\end{enumerate} 
\end{Lemma}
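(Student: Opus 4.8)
The plan is to prove this exactly as the single-automorphism analogue of Lemma \ref{Lemma:central-sequence-reformulation-finite-group}, via the standard reindexing (diagonal sequence) argument relating the local ``for every finite $F$ and $\eps$'' formulation of Definition \ref{nonunital Rokhlin-dim-single-auto} to exact relations in $\A_{\infty} \cap \A'$. Since both formulations quantify over the same parameter $p>0$ and each involves only finitely many conditions, it suffices to fix a nonnegative integer $d$ and a height $p>0$ and show that the conditions of Definition \ref{nonunital Rokhlin-dim-single-auto} hold for this $d$ (and this $p$, for all finite $F$ and all $\eps>0$) if and only if the displayed relations hold in $\A_{\infty} \cap \A'$ for this $d$ and $p$; the equality of the two ``least such $d$'' then follows formally, since the two sets of witnessing $d$ coincide. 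Throughout I use that $\overline{\alpha}$ acts on $\A_{\infty}$ coordinatewise by $\alpha$ and is isometric, so conditions (3)--(5) transport between the two pictures without loss.

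For the direction ``central sequence $\Rightarrow$ Definition'', given positive contractions $f_{q,j}^{(l)} \in \A_{\infty} \cap \A'$ as in the statement, I would lift each to a bounded sequence of positive contractions $\big( (f_{q,j}^{(l)})_n \big)_n$ in $\ell^{\infty}(\N, \A)$ (positive contractions lift to positive contractions by truncating a self-adjoint lift coordinatewise). Each exact relation in the lemma, e.g. $f_{q,k}^{(l)} f_{r,j}^{(l)} a = 0$, asserts that the corresponding sequence of norms tends to $0$; likewise centrality $[f_{q,j}^{(l)}, a] = 0$ in $\A_{\infty}$ gives $\lim_n \|[(f_{q,j}^{(l)})_n, a]\| = 0$, which is condition (3). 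Given $F$ and $\eps$ there are only finitely many such norm-sequences to control, so I would choose $n$ large enough that all of them lie below $\eps$ for every $a \in F$ simultaneously; the $n$-th terms are then the required double tower of height $p$.

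For the converse I would use separability to fix an increasing sequence of finite sets $F_1 \subseteq F_2 \subseteq \cdots$ with dense union in $\A$, and for the fixed $p$ apply Definition \ref{nonunital Rokhlin-dim-single-auto} with $(F_n, p, 1/n)$ to produce positive contractions $(f_{q,j}^{(l)})_n \in \A$ for each $n$; setting $f_{q,j}^{(l)} = \big[ ( (f_{q,j}^{(l)})_n )_n \big] \in \A_{\infty}$ gives positive contractions. For $a \in F_m$ and $n \geq m$ the commutator estimate is below $1/n$, so $[f_{q,j}^{(l)}, a] = 0$ on the dense set $\bigcup_m F_m$ and hence, by boundedness of the $f_{q,j}^{(l)}$ and continuity of multiplication, for all $a \in \A$; thus the elements lie in $\A_{\infty} \cap \A'$. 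The same passage to the limit turns each approximate relation (1)--(5) into the corresponding exact relation against every $a$ in the dense set, and these extend to all $a \in \A$ because each relation is continuous and bounded in $a$.

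The argument is routine reindexing, and I expect the only genuine care to be needed in two places. First, the repeated move from ``holds for all $a$ in a dense subset of $\A$'' to ``holds for all $a \in \A$'', which rests on the uniform bound coming from working with contractions together with continuity (and the isometry of $\overline{\alpha}$) of the operations involved. Second, the normalization ensuring the resulting elements are genuine positive contractions, which is handled exactly as in the finite-group reformulation Lemma \ref{Lemma:central-sequence-reformulation-finite-group} and its accompanying orthogonality strengthening in Remark \ref{Rmk:finite-group-ortho}. Neither presents a real obstacle; the single-automorphism bookkeeping (two towers of heights $p$ and $p+1$, the cyclic boundary condition (4), and the index ranges $k = 0, \ldots, p-1+q$) is merely heavier than in the finite-group case.
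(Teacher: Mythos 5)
Your argument is correct and is precisely the standard lifting/diagonalization argument the paper has in mind: the paper omits the proof entirely, treating it (as it does for the finite-group analogue, Lemma \ref{Lemma:central-sequence-reformulation-finite-group}) as a straightforward exercise. Your two flagged points of care --- passing from a dense set of $a$ to all of $\A$ via boundedness, and normalizing the lifts to positive contractions --- are exactly the right ones, and both are handled as you describe.
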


\begin{Rmk}
\label{Rmk:central-sequence-reformulation-single-auto-Moreover}
As in the case of finite group actions, with the notation of Lemma \ref{Lemma:central-sequence-reformulation-single-auto} above, if $\dimrokct(\alpha)=d$ and $\B \subseteq \A_{\infty}$ is any separable subset, then the Rokhlin elements $f_{0,0}^{(l)},f_{0,1}^{(l)},\ldots,f_{0,p-1}^{(l)}, f_{1,0}^{(l)}, f_{1,1}^{(l)},\ldots,f_{1,p}^{(l)}
$ 
can in addition be chosen to satisfy
$$
f^{(l)}_{r,j}b = b f^{(l)}_{r,j}
$$
for all $b \in \B$, for all $l=0,1,\ldots,d$, for $r=0,1$, and for $j=0,1,\ldots,p-1+r$.
\end{Rmk}

\begin{Rmk}
\label{Rmk:single-auto-ortho}
Condition (\ref{Lemma:central-sequence-reformulation-single-auto:orthogonality}) in Lemma \ref{Lemma:central-sequence-reformulation-single-auto} can be strengthened to require that
$f_{q,k}^{(l)}f_{r,j}^{(l)}=0$, for the same indices that appear there. The proof of this is the same as in Remark \ref{Rmk:finite-group-ortho}. Likewise, one can strengthen condition (\ref{def-single-auto-ortho}) in Definition \ref{nonunital Rokhlin-dim-single-auto} to require that $f_{q,k}^{(l)}f_{r,j}^{(l)}=0$.
\end{Rmk}

The paper \cite{Kirchberg-Abel} is devoted to a study of the $C^*$-algebra $(\A_{\omega}\cap A')/\mathrm{Ann}(\A)$, where $\omega$ is a free ultrafilter, as a suitable substitute for $\A_{\omega}\cap A'$ when $\A$ is nonunital. We do not use this formalism explicitly here. However it is worth noting that Lemma \ref{Lemma:central-sequence-reformulation-single-auto} takes a rather natural form if one considers the image of the Rokhlin elements in the quotient $(\A_{\infty} \cap \A')/\mathrm{Ann}(\A)$.

\section{Actions of finite groups: permanence properties}
\label{sec:finite-group-permanence-properties}

In this section we consider permanence properties for crossed products by
actions with finite Rokhlin dimension, and study the behavior of such actions
under extensions.

We begin by extending the permanence properties from \cite{HWZ} to the
nonunital setting, which  we state as Theorems
\ref{Thm:permanence-dimension-finite-groups} and
\ref{Thm:permanence-Z-finite-groups}.

\begin{Thm}
\label{Thm:permanence-dimension-finite-groups}
Let $G$ be a finite group, let $\A$ be a $C^*$-algebra with finite decomposition rank
and let $\alpha\colon G\to \textup{Aut}(\A)$
be an action with $\dimrokct(\alpha)=d$. Then the crossed product $\A
\rtimes_{\alpha} G$ has finite 
decomposition rank. In fact, 
$$
\dr(\A \rtimes_{\alpha} G) \leq (\dr(\A) +1) (d+1)-1  \; .
$$ 
The same statement is true 
for nuclear dimension in place  of decomposition rank. 
\end{Thm}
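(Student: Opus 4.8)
The plan is to establish the decomposition rank bound and then to observe that the same construction, with the joint contractivity of the reassembling map dropped, yields the nuclear dimension bound. To bound $\dr(\A \rtimes_{\alpha} G)$ by $N := (\dr(\A)+1)(d+1) - 1$, I must produce, for each finite $F \subseteq \A \rtimes_{\alpha} G$ and each $\eps > 0$, a finite-dimensional algebra $E$ and completely positive contractive (c.p.c.) maps $\psi \colon \A \rtimes_{\alpha} G \to E$ and $\varphi \colon E \to \A \rtimes_{\alpha} G$ with $\varphi$ c.p.c., $\varphi = \sum_{i=0}^{N} \varphi^{(i)}$ with each $\varphi^{(i)}$ c.p.c.\ order zero, and $\|\varphi \psi(x) - x\| < \eps$ on $F$. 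Since finite sums $\sum_{g \in G} a_g u_g$, with $a_g \in \A$ and $u_g \in M(\A \rtimes_{\alpha} G)$ the implementing unitaries, are dense, I may assume $F$ consists of such elements and reduce to controlling the maps on the coefficients $a_g$ and the $u_g$.

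First I would fix the two pieces of input data. Using $\dr(\A) < \infty$, choose a c.p.c.\ approximation $\A \xrightarrow{\mu} F_0 \xrightarrow{\nu} \A$ of the coefficients occurring in $F$, with $F_0$ finite dimensional, $\nu$ c.p.c., and $\nu = \sum_{i=0}^{\dr(\A)} \nu^{(i)}$ a sum of c.p.c.\ order zero maps. In parallel, by Lemma~\ref{Lemma:central-sequence-reformulation-finite-group} together with Remark~\ref{Rmk:finite-group-ortho}, choose a Rokhlin system $(f^{(l)}_g)_{l,g}$ in $\A_{\infty} \cap \A'$ which is \emph{exactly} orthogonal within each tower, exactly $G$-permuted, mutually commuting, and which acts as a partition of unity on $\A$. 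Working in $\A_{\infty} \cap \A'$ is the device that converts the approximate Rokhlin relations into exact identities (after multiplying by elements of $\A$), and the generators $\A$ and $u_g$ together with these elements produce the desired maps into $\A_{\infty} \rtimes_{\overline{\alpha}} G \subseteq (\A \rtimes_{\alpha} G)_{\infty}$; a final reindexing at a single large sequence index will transfer everything back to the finite-$\eps$ statement.

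The core of the argument is carried out one Rokhlin colour $l$ at a time. For fixed $l$, exact orthogonality and equivariance make $u_g (f^{(l)}_1)^{1/2}$ behave like the columns of a $G$-indexed system of matrix units: one computes $u_g (f^{(l)}_1)^{1/2} = (f^{(l)}_g)^{1/2} u_g$ and $\big(u_g f^{(l)}_1 u_h^{*}\big)\big(u_k f^{(l)}_1 u_m^{*}\big) = u_g f^{(l)}_1 f^{(l)}_{h^{-1}k} u_{h^{-1}k} u_m^{*}$, which vanishes unless $h = k$ because of the tower orthogonality. Thus the $l$-th tower furnishes a c.p.c.\ order zero map $\Theta^{(l)} \colon M_{\# G}(\A) \to \A \rtimes_{\alpha} G$ sending $E_{g,h} \otimes a \mapsto u_g (f^{(l)}_1)^{1/2} a (f^{(l)}_1)^{1/2} u_h^{*}$, where the off-diagonal group indices are killed by the orthogonality of the $f$'s and the diagonal case is controlled by centrality. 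I would then define $\varphi^{(l,i)} := \Theta^{(l)} \circ (\nu^{(i)} \text{ entrywise}) \colon M_{\# G}(F_0) \to \A \rtimes_{\alpha} G$; because the group part and the $F_0$ part factor, order-zeroness of $\varphi^{(l,i)}$ reduces directly to order-zeroness of $\nu^{(i)}$ (diagonal group index) and orthogonality of the $f^{(l)}_g$ (off-diagonal group index), while its contractivity follows from $\|\sum_h (f^{(l)}_h)^{1/2} u_h \nu^{(i)}(1_{F_0}) u_h^{*} (f^{(l)}_h)^{1/2}\| \le 1$, the summands being orthogonal. Dually, $\psi^{(l)} \colon \A \rtimes_{\alpha} G \to M_{\# G}(F_0)$ would compress by the $(f^{(l)}_h)^{1/2}$, use the canonical conditional expectation $E \colon \A \rtimes_{\alpha} G \to \A$ to read off the coefficient of $u_g$ into the matrix entry $(h, g^{-1}h)$, and apply $\mu$ entrywise. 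Setting $\psi = \bigoplus_{l=0}^{d} \psi^{(l)}$ into $E := \bigoplus_{l=0}^{d} M_{\# G}(F_0)$ and $\varphi = \sum_{l=0}^{d} \sum_{i=0}^{\dr(\A)} \varphi^{(l,i)}$ gives exactly $(d+1)(\dr(\A)+1)$ order zero summands, i.e.\ the value $N$; and a routine index computation shows $\sum_i \varphi^{(l,i)} \psi^{(l)}(a u_g) \approx \sum_h f^{(l)}_h \, a u_g$, so that summing over $l$ and invoking $\sum_{l,h} f^{(l)}_h = 1$ reassembles the identity on $F$.

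The main obstacle, as usual in the nonunital passage, is the bookkeeping: the Rokhlin relations hold only after multiplication by $\A$ and only approximately at a finite index, so the commutators $[f^{(l)}_g, a_g]$ and the equivariance defects $\alpha_h(f^{(l)}_g) - f^{(l)}_{hg}$ must be absorbed into the error budget, and the supports of the coefficients must be arranged to lie where $\sum_{l,h} f^{(l)}_h$ acts as a unit, which is where a quasicentral approximate unit for $\A$ enters. I expect no single conceptual step to be hard once the construction is carried out in $\A_{\infty} \cap \A'$, where the order zero and matrix-unit identities become exact; the delicate part is controlling these errors uniformly and, for the decomposition rank statement (as opposed to nuclear dimension), verifying that the full map $\varphi = \sum_{l,i} \varphi^{(l,i)}$ is contractive. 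The latter is where I would spend the most care: it follows from contractivity of $\nu$ together with the fact that $\sum_{l,g} f^{(l)}_g$ is a contraction acting as a local unit, so that the reinserted matrix units assemble to an element of norm at most one; for nuclear dimension this joint contractivity is not required, so the same computation gives the bound with $\dr$ replaced by $\dimnuc$ throughout.
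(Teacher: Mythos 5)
The paper gives no written proof of this theorem: it is declared to be a ``straightforward'' generalization of \cite[Theorem 1.3]{HWZ}, so your task was to reconstruct that argument in the nonunital setting, and your overall strategy (Rokhlin towers giving order zero maps $M_{\# G}(F_0) \to \A \rtimes_\alpha G$, one per colour $l$ and per summand $\nu^{(i)}$ of the c.p.c.\ approximation of $\A$, hence $(d+1)(\dr(\A)+1)$ pieces) is exactly the intended one. However, your construction has a concrete flaw in the reassembly step. You compress by $(f_h^{(l)})^{1/2}$ in $\psi^{(l)}$ \emph{and} reinsert $(f_1^{(l)})^{1/2}$ on each side in $\Theta^{(l)}$. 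Tracking the powers: the $(h,g^{-1}h)$ entry of $\psi^{(l)}(a u_g)$ is approximately $\mu\big(f_1^{(l)}\alpha_{h^{-1}}(a)\big)$ (one power of $f^{(l)}$), and applying $\Theta^{(l)}$ to $E_{h,g^{-1}h}\otimes \nu\mu\big(f_1^{(l)}\alpha_{h^{-1}}(a)\big)$ produces $u_h f_1^{(l)}\cdot f_1^{(l)}\alpha_{h^{-1}}(a) u_h^* u_g = \big(f_h^{(l)}\big)^2 a u_g$. So $\sum_i\varphi^{(l,i)}\psi^{(l)}(au_g) \approx \sum_h \big(f_h^{(l)}\big)^2 a u_g$, not $\sum_h f_h^{(l)} a u_g$ as you claim, and $\sum_{l,h}\big(f_h^{(l)}\big)^2$ is not an approximate unit on $\A$ when $d\geq 1$ (e.g.\ two colours with $f\equiv 1/2$ give total $1/2$). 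The standard repair is to put \emph{all} of the Rokhlin elements into $\varphi$: take $\psi$ to be the regular-representation embedding $x\mapsto\big[E(u_h^* x u_k)\big]_{h,k}$ (a $*$-homomorphism into $M_{\# G}(\A)$, hence c.p.c.) followed by $\mu$ entrywise, and keep $\Theta^{(l)}(E_{g,h}\otimes a)= w_g^{(l)} a \big(w_h^{(l)}\big)^*$ with $w_g^{(l)}=\big(f_g^{(l)}\big)^{1/2}u_g$; then the composition yields $\sum_h f_h^{(l)} a u_g$ with a single power of $f$, as needed.

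A second, smaller gap concerns contractivity of $\varphi=\sum_{l,i}\varphi^{(l,i)}$ for the decomposition rank statement. You invoke ``the fact that $\sum_{l,g}f_g^{(l)}$ is a contraction,'' but this is not part of Definition \ref{def: nonunital Rokhlin finite groups} or Lemma \ref{Lemma:central-sequence-reformulation-finite-group}: orthogonality gives $\sum_g f_g^{(l)}\leq 1$ for each fixed $l$, but across colours one only gets $\sum_{l,g}f_g^{(l)}\leq d+1$, and the condition $\big(\sum_{l,g}f_g^{(l)}\big)a=a$ does not force the sum to be a contraction. You must first normalize: the $f_g^{(l)}$ all commute, so functional calculus in the commutative $C^*$-algebra they generate (equivalently, working modulo $\mathrm{Ann}(\A)$ in $\A_\infty\cap\A'$, where the sum becomes the unit, and relifting) replaces them by elements with the same properties relative to $\A$ and with total sum $\leq 1$. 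With both repairs the argument goes through and each individual $\varphi^{(l,i)}$ is automatically contractive (its value at $1$ is dominated by $\sum_g f_g^{(l)}\leq 1$), which is all that is needed for the nuclear dimension version.
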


\begin{Thm}
\label{Thm:permanence-Z-finite-groups}
Let $G$ be a finite group, let $\A$ be a separable $\Zh$-absorbing $C^*$-algebra, and
let $\alpha\colon G\to \aut(\A)$
be an action with $\dimrokct(\alpha)<\infty$. Then  $\A \rtimes_{\alpha} G$ is
$\Zh$-absorbing.
\end{Thm}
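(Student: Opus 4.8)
The plan is to produce a unital copy of $\Zh$ in a central sequence algebra of the crossed product, manufacturing it $G$-equivariantly out of the $\Zh$-structure already present in $\A$. Throughout I would work with Kirchberg's central sequence algebra $F(\A) = (\A_{\infty} \cap \A')/\mathrm{Ann}(\A)$, which is unital even though $\A$ need not be and carries an induced $G$-action $\overline{\alpha}$. The two reductions I rely on are: first, the central-sequence characterization of $\Zh$-absorption (Toms--Winter, together with the strong self-absorption of $\Zh$ and, in the nonunital case, Kirchberg's $F(\cdot)$ formalism), namely that a separable $C^*$-algebra $B$ is $\Zh$-absorbing if and only if there is a unital $*$-homomorphism $\Zh \to F(B)$, applied to $B = \A \rtimes_{\alpha} G$ (which is separable since $G$ is finite); and second, the same characterization applied to $\A$ itself, which yields a unital $*$-homomorphism $\psi \colon \Zh \to F(\A)$ that is, however, not equivariant.

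The bridge between the two central sequence algebras is the observation that $G$-invariant central sequences of $\A$ are central in the crossed product. An element of $F(\A)$ fixed by $\overline{\alpha}$ is represented by a central sequence that commutes with $\A$ and, being invariant, commutes with the implementing unitaries $u_g \in M(\A \rtimes_{\alpha} G)$ (since $u_g d u_g^* = \overline{\alpha}_g(d)$); such a sequence respects the relevant annihilator ideals, so one obtains a unital $*$-homomorphism $F(\A)^{\overline{\alpha}} \to F(\A \rtimes_{\alpha} G)$. It therefore suffices to produce a unital $*$-homomorphism $\Zh \to F(\A)^{\overline{\alpha}}$ into the fixed-point algebra; equivalently, this amounts to showing that $\alpha$ is equivariantly $\Zh$-absorbing, with $\Zh$ carrying the trivial action.

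To equivariantize $\psi$ I would use the Rokhlin system. Passing to $F(\A)$ turns the relations of Lemma \ref{Lemma:central-sequence-reformulation-finite-group} into exact identities, so I obtain positive contractions $\big(f_g^{(l)}\big)_{l=0,\dots,d;\, g\in G}$ in $F(\A)$ with $f_g^{(l)} f_h^{(l)} = 0$ for $g \neq h$, with $\sum_{l,g} f_g^{(l)} = 1$, with commuting towers, and with $\overline{\alpha}_h\big(f_g^{(l)}\big) = f_{hg}^{(l)}$; by the diagonalization of Remark \ref{Rmk:central-sequence-reformulation-finite-group-Moreover} I may further take them to commute with the separable set $\bigcup_{g} \overline{\alpha}_g(\psi(\Zh))$. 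For each color $l$ define $\psi^{(l)} \colon \Zh \to F(\A)$ by $\psi^{(l)}(z) = \sum_{g \in G} f_g^{(l)}\, \overline{\alpha}_g(\psi(z))$. A direct check shows each $\psi^{(l)}$ is a contractive order zero map landing in $F(\A)^{\overline{\alpha}}$ (equivariance follows by reindexing $g \mapsto h^{-1}g$ and using $\overline{\alpha}_h(f_g^{(l)}) = f_{hg}^{(l)}$), and $\sum_{l=0}^{d} \psi^{(l)}(1) = \sum_{l,g} f_g^{(l)} = 1$. For $d = 0$ the single map $\psi^{(0)}$ is already a unital equivariant homomorphism, recovering the Rokhlin-property case.

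The hard part will be the case $d > 0$: the $d+1$ order zero maps $\psi^{(l)}$ have overlapping supports and non-commuting ranges, so they do not assemble into a homomorphism by any naive averaging, and this is precisely where finite Rokhlin dimension rather than the Rokhlin property costs something. The resolution I would pursue exploits the strong self-absorption $\Zh \cong \Zh^{\otimes \infty}$: writing each $\psi^{(l)}(z) = F^{(l)} \pi^{(l)}(z)$ via the structure theorem for order zero maps, with $F^{(l)} = \psi^{(l)}(1)$ commuting central elements summing to $1$ and $\pi^{(l)}$ a homomorphism into the associated support algebra, I would run a reindexing/intertwining argument inside $F(\A)^{\overline{\alpha}}$ to absorb the $d+1$ overlapping pieces into a single unital equivariant homomorphism $\Zh \to F(\A)^{\overline{\alpha}}$, at the cost of passing once more to a sequence algebra. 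This promotion of finitely many equivariant order zero maps summing to the unit into one honest homomorphism, using only that the target is $\Zh$-stable and that $\Zh$ is strongly self-absorbing, is the technical core; the remaining nonunital bookkeeping (the multiplier-algebra home of the $u_g$ and the tracking of annihilator ideals) is routine within the $F(\cdot)$ formalism.
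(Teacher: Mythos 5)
Your framework (invariant central sequences of $\A$ become central sequences of $\A \rtimes_{\alpha} G$; equivariantize a copy of $\Zh$ in the central sequence algebra using the Rokhlin elements) is exactly the paper's, and everything up to the construction of the $d+1$ equivariant order zero maps is sound. But there are two genuine problems with what follows. First, your maps $\psi^{(l)}(z) = \sum_{g} f_g^{(l)}\,\overline{\alpha}_g(\psi(z))$ are built from a \emph{single} homomorphism $\psi$, so for $l \neq l'$ the ranges of $\psi^{(l)}$ and $\psi^{(l')}$ involve products of $\overline{\alpha}_g(\psi(\Zh))$ with $\overline{\alpha}_h(\psi(\Zh))$ for $g \neq h$ with no orthogonality of the $f$'s to kill the cross terms, and distinct group translates of one central-sequence copy of $\Zh$ need not commute with each other. (Within a fixed color the orthogonality $f_g^{(l)} f_h^{(l)} = 0$ saves you; across colors it does not.) This is precisely why the paper constructs $d+1$ \emph{different} unital multiplicative maps $\iota_0, \ldots, \iota_d \colon \Zh \to \A_{\infty} \cap \A'$, by a diagonalization, with the property that $[\overline{\alpha}_g(\iota_k(x)), \iota_l(y)] = 0$ for $k \neq l$; without that, the colored maps do not have commuting images and the merging step has no starting point.

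Second, and more seriously, the step you call the ``technical core'' --- promoting $d+1$ order zero maps from $\Zh$ with units summing to $1$ (and, in your version, non-commuting ranges) to a single unital homomorphism --- is not a routine reindexing/intertwining argument and is not established by your sketch; in particular the elements $F^{(l)} = \psi^{(l)}(1)$ are not central in $F(\A)$, only relatively central with respect to $\A$ and the chosen separable sets, so the structure-theorem decomposition does not assemble the way you describe. The paper's actual mechanism never merges order zero maps defined on all of $\Zh$: it works with the matrix building blocks of $\Zh_{n,n+1}$, uses the universal $C^*$-algebra $D_n^{(m)}$ of $m$ \emph{commuting} order zero images of $M_n$ together with \cite[Lemma 5.3]{HWZ} to fuse the $d+1$ colors into a single order zero map $M_n \to \A_{\infty} \cap \A'$ (and likewise for $M_{n+1}$), then applies the R{\o}rdam--Winter gluing to get one order zero map from $\Zh_{n,n+1}$ whose unit acts as the identity on $\A$, which by \cite[Proposition 4.1]{HRW} suffices for $\Zh$-absorption of the crossed product (Lemma \ref{lemma:enough-to-embed-building-block} and Corollaries \ref{cor:d-order-zero-maps} and \ref{Cor:enough-to-embed-several-order-zero-pairs}). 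Commutativity of the colored images is the load-bearing hypothesis of that fusion, so your proof as written has a gap exactly where the finite Rokhlin dimension (as opposed to the Rokhlin property) has to be paid for.
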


 Theorem
\ref{Thm:permanence-dimension-finite-groups} is a generalization of \cite[Theorem 1.3]{HWZ} to the nonunital setting. The modification required to obtain this generalization is
straightforward and will be omitted. For Theorem
\ref{Thm:permanence-dimension-finite-groups}, it is not necessary to assume that
the different Rokhlin towers approximately commute (condition (\ref{commuting
tower assumption}) in Definition \ref{def: nonunital Rokhlin finite groups}).

Theorem \ref{Thm:permanence-Z-finite-groups}
requires more argument. We will omit proofs when they are straightforward
modifications or corollaries of results that have appeared elsewhere.

\begin{Lemma}
\label{Lemma:universal-space-commuting-contractions}
Let $X = \{(x_0,x_1) \in [0,1]^2 \mid 0< x_0+x_1 \leq 1\}$.
The universal $C^*$-algebra generated by two commuting positive contractions
$a_0,a_1$ satisfying $a_0+a_1\leq 1$ is isomorphic to $C_0(X)$, in such a way that for $j=0,1$, the element $a_j$ becomes the function   $a_j(x_0,x_1) = x_j$. 
\end{Lemma}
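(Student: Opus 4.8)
The plan is to combine Gelfand duality with the universal property. As a preliminary I would note that the universal $C^*$-algebra $\A$ is well-defined, because the relation $a_0 + a_1 \leq 1$ (interpreted in the unitization) forces $0 \leq a_j \leq 1$ and so bounds the norms of the generators. Now the two generators $a_0,a_1$ are commuting self-adjoint elements that generate $\A$, so every element of $\A$ is a norm-limit of polynomials in the mutually commuting elements $a_0,a_1$; hence $\A$ is commutative, and by Gelfand duality $\A \cong C_0(Y)$, where $Y$ is the spectrum of $\A$, that is, its space of nonzero $*$-homomorphisms into $\C$ with the weak-$*$ topology, a locally compact Hausdorff space.

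Next I would introduce the evaluation map $\Phi \colon Y \to \R^2$, $\Phi(\varphi) = (\varphi(a_0),\varphi(a_1))$, and verify that it is a continuous bijection of $Y$ onto $X$. Continuity is immediate from the definition of the weak-$*$ topology, and injectivity holds because a character is determined by its values on a generating set. To see that $\Phi(Y) \subseteq X$, extend $\varphi$ to a unital character of the unitization and apply it to the relations $a_j \geq 0$ and $1 - a_0 - a_1 \geq 0$ to get $\varphi(a_0), \varphi(a_1) \geq 0$ and $\varphi(a_0) + \varphi(a_1) \leq 1$; moreover $\varphi(a_0) + \varphi(a_1) > 0$, since otherwise $\varphi$ would vanish on both generators, hence on all of $\A$, contradicting $\varphi \neq 0$. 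Surjectivity onto $X$ is exactly where the universal property enters: given $(x_0,x_1) \in X$, the scalars $x_0,x_1 \in \C$ are commuting positive contractions with $x_0 + x_1 \leq 1$, so the universal property produces a homomorphism $\varphi \colon \A \to \C$ with $\varphi(a_j) = x_j$, and this $\varphi$ is nonzero precisely because $(x_0,x_1) \neq (0,0)$.

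The main obstacle is to upgrade the continuous bijection $\Phi \colon Y \to X$ to a homeomorphism, since a continuous bijection of locally compact spaces need not be one. Here I would pass to one-point compactifications. First, because $\Phi$ is a continuous surjection and $X = T \smallsetminus \{(0,0)\}$ is noncompact (where $T = \{(x_0,x_1) \in [0,1]^2 \mid x_0 + x_1 \leq 1\}$ is compact), the space $Y$ is noncompact, so $Y^+$ is its genuine one-point compactification; on the target side the one-point compactification of $X$ is $T$ itself, with the point at infinity identified with the origin. Under $\A \cong C_0(Y)$ the generators $a_0,a_1$ become functions in $C_0(Y)$, so their Gelfand transforms $\varphi \mapsto \varphi(a_j)$ vanish at infinity; consequently $\Phi$ extends continuously to $\Phi^+ \colon Y^+ \to T$ by sending the point at infinity to $(0,0)$. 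This $\Phi^+$ is a continuous bijection between compact Hausdorff spaces, hence a homeomorphism, and restricting it to $Y$ exhibits $\Phi \colon Y \to X$ as a homeomorphism.

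Finally I would read off the conclusion: the induced isomorphism $\A \cong C_0(Y) \cong C_0(X)$ sends $a_j$ to its Gelfand transform $\varphi \mapsto \varphi(a_j)$, which under the identification $\varphi \leftrightarrow (x_0,x_1)$ is exactly the coordinate function $(x_0,x_1) \mapsto x_j$, as required. I expect the homeomorphism step of the third paragraph to be the only part requiring genuine care; the remaining verifications are routine, the essential input being the universal property, used for surjectivity of $\Phi$, and the vanishing at infinity of the generators, used to compactify.
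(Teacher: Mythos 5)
Your proof is correct and complete; the paper itself omits the argument as ``straightforward,'' and what you have written is precisely the standard argument the authors intend: Gelfand duality plus the universal property to get a continuous bijection onto $X$, and the one-point compactification $X^+ = \{(x_0,x_1)\in[0,1]^2 \mid x_0+x_1\leq 1\}$ (with $(0,0)$ as the point at infinity, using that the Gelfand transforms of $a_0,a_1$ vanish at infinity) to upgrade it to a homeomorphism. No gaps.
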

The proof is straightforward and will be omitted. In the above picture, let
$p_0,p_1$ be the support projections of $a_0,a_1$ in $C_0(X)^{**}$, and let $p$ be the
support projection of $a_0+a_1$. It is easy to construct two positive
contractions $g_0,g_1 \in M(C_0(X))$ such that the support projection of $g_0$ is
$p_0$, the support projection of $g_1$ is $p_1$ and $g_0+g_1 = p$. For example, for $r \in (0,1]$ and $\theta \in [0,\pi/2]$ for which $(r\cos(\theta),r\sin(\theta)) \in X$, set $g_1(r\cos(\theta),r\sin(\theta)) =
2\theta/\pi$ and $g_0 = 1-g_1$.

We refer the reader to \cite[Theorem 3.3]{winter-zacharias-order-zero} for 
the structure of order zero maps, which we use below. If $\A,\B$ are $C^*$-algebras with $\A$ unital and $\varphi\colon \A \to \B$ is a completely positive order zero map, then there is a homomorphism $\pi\colon  \A \to M(C^*(\varphi(\A))) \cap \varphi(1)' \subseteq \B^{**}$ such that for all $a \in \A$ we have $\varphi(a) = \pi(a)\varphi(1)$. We call $\pi$ the support homomorphism of $\varphi$. We write 
$$
\Zh_{n,n+1} = \{f \in C([0,1] , M_n \otimes M_{n+1}) \mid f(0) \in M_n \otimes 1 \; \mathrm{and} \; f(1) \in 1 \otimes M_{n+1}\}  
.
$$
One can define order zero contractions $\theta_0 \colon M_n \to \Zh_{n,n+1}$ and $\theta_1 \colon M_{n+1} \to \Zh_{n,n+1}$ by $\theta_0(a)(t) = (1-t)\cdot a\otimes 1_{M_{n+1}}$ and $\theta_1(a)(t) = t\cdot 1_{M_n} \otimes a$. One checks that $\theta_0(1) + \theta_1(1) = 1$, and the images of these two maps generate $\Zh_{n,n+1}$.
\begin{Lemma}
Let $\A$ be a $C^*$-algebra. Suppose $\varphi_0\colon M_{n} \to \A$ and $\varphi_1\colon M_{n+1} \to \A$ are two
contractive order zero maps with commuting images such that
$\varphi_0(1)+\varphi_1(1) \leq 1$. Then there is an order zero map
$\Phi\colon \Zh_{n,n+1} \to \A$ with $\Phi(1) = \varphi_0(1)+\varphi_1(1)$.
\end{Lemma}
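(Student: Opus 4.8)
The plan is to use the structure theorem for order zero maps to reduce the statement to assembling two \emph{commuting} order zero maps into a single unital $*$-homomorphism out of $\Zh_{n,n+1}$, and then to compress by a positive element to return to the order zero world. First I would apply the structure theorem to write $\varphi_0 = \pi_0(\cdot)\,h_0$ and $\varphi_1 = \pi_1(\cdot)\,h_1$, where $h_i = \varphi_i(1)$ are positive contractions and $\pi_0\colon M_n \to \A^{**}$, $\pi_1\colon M_{n+1}\to \A^{**}$ are the support homomorphisms, with $\pi_i$ commuting with $h_i$. The hypothesis that $\varphi_0,\varphi_1$ have commuting images says that $C^*(\varphi_0(M_n))$ and $C^*(\varphi_1(M_{n+1}))$ commute; passing to the von Neumann algebras they generate in $\A^{**}$ (bicommutant), one obtains that $h_0$, $h_1$, $\pi_0(M_n)$, $\pi_1(M_{n+1})$ commute with one another (apart from within each matrix factor). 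In particular $\mu(a\otimes b) = \pi_0(a)\pi_1(b)$ is a unital $*$-homomorphism $M_n\otimes M_{n+1}\to\A^{**}$.

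Next I would produce the rescaling. Put $h = h_0 + h_1$ and let $p,p_0,p_1$ be the support projections of $h,h_0,h_1$. Since $h_0 + h_1 \le 1$, the joint spectrum of the commuting pair $(h_0,h_1)$ lies in the set $X = \{(x_0,x_1): x_i\ge 0,\ 0 < x_0+x_1\le 1\}$ of Lemma \ref{Lemma:universal-space-commuting-contractions}, so bounded Borel functional calculus yields commuting positive contractions $w_0 = h_0(h_0+h_1)^{-1}$ and $w_1 = h_1(h_0+h_1)^{-1}$ in $\A^{**}$ — the operator-theoretic analogues of the multipliers $g_0,g_1$ constructed immediately after that lemma — with $w_i h = h_i$, $w_0 + w_1 = p$, and $\supp(w_i) = p_i$. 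These commute with $\pi_0(M_n)$ and $\pi_1(M_{n+1})$. Consequently $\widehat{\theta}_0(a) = \pi_0(a)w_0$ and $\widehat{\theta}_1(b) = \pi_1(b)w_1$ are commuting contractive order zero maps from $M_n$ and $M_{n+1}$ into the corner $p\A^{**}p$, with $\widehat{\theta}_0(1)+\widehat{\theta}_1(1) = w_0 + w_1 = p$, the unit of that corner.

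Finally I would feed this pair into the universal property of $\Zh_{n,n+1}$. Since $\Zh_{n,n+1}$ is the universal $C^*$-algebra generated by commuting order zero maps $\theta_0\colon M_n$ and $\theta_1\colon M_{n+1}$ with $\theta_0(1)+\theta_1(1)=1$ (this is the content of its fibered description, with fibers $M_n$ at $t=0$, $M_{n+1}$ at $t=1$, and $M_n\otimes M_{n+1}$ in between), the pair $(\widehat\theta_0,\widehat\theta_1)$ induces a unital $*$-homomorphism $\pi_\Phi\colon\Zh_{n,n+1}\to p\A^{**}p$ with $\pi_\Phi\circ\theta_i = \widehat\theta_i$. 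Because $\pi_\Phi(\Zh_{n,n+1})\subseteq p\A^{**}p$ commutes with $h$, the formula $\Phi(f) := \pi_\Phi(f)\,h$ defines a completely positive order zero map with $\Phi(1) = ph = h = \varphi_0(1)+\varphi_1(1)$, as required. To see that $\Phi$ actually takes values in $\A$ and not merely $\A^{**}$, I would pass to the cone picture: $\Phi$ corresponds to a $*$-homomorphism $\rho\colon C_0((0,1])\otimes\Zh_{n,n+1}\to\A^{**}$ with $\rho(\iota\otimes f) = \Phi(f)$, whose image $C^*(\Phi(\Zh_{n,n+1}))$ is generated by the images of the generators $\iota\otimes\theta_0(a)$, $\iota\otimes\theta_1(b)$ and $g\otimes 1$; these equal $\pi_0(a)w_0 h = \varphi_0(a)$, $\varphi_1(b)$, and $g(h)$ respectively, all of which lie in $\A$.

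The hard part will be the step invoking the universal property of $\Zh_{n,n+1}$, that is, showing that two commuting order zero maps whose units sum to the identity genuinely assemble into a single $*$-homomorphism out of $\Zh_{n,n+1}$. The subtlety is concentrated on the two endpoint fibers, where one of the maps degenerates (its unit part $w_i$ drops strictly below $p$): a naive attempt to define the homomorphism as $\mu$ followed by a functional calculus in $w_1$ introduces spurious support projections, for instance a factor $\pi_1(1) = p_1$ multiplying $h_0$, and one has $p_1 h_0 \ne h_0$ in general. It is precisely the boundary conditions defining $\Zh_{n,n+1}$ — values in $M_n\otimes 1$ at $t=0$ and in $1\otimes M_{n+1}$ at $t=1$ — that reconcile the two representations along the degenerate fibers and make the assembly consistent. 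I would handle this either by citing the universal property of $\Zh_{n,n+1}$ directly, or self-containedly by checking that $\widehat\theta_0,\widehat\theta_1$ satisfy the defining relations fiberwise and that the resulting section-valued map is a well-defined $*$-homomorphism. The remaining bookkeeping — the commutation relations in $\A^{**}$ and the functional-calculus identities $w_i h = h_i$ and $w_0 + w_1 = p$ — is routine.
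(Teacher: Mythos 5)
Your proposal is correct and follows essentially the same route as the paper: decompose each $\varphi_i$ via its support homomorphism, rescale the units $h_i$ to commuting positive contractions $w_i$ with $w_0+w_1=p$ (the paper realizes these as $\psi^{**}(g_i)$ for explicit multipliers $g_i$ of $C_0(X)$, you as $h_i(h_0+h_1)^{-1}$ by Borel functional calculus — the same thing), invoke the universal property of $\Zh_{n,n+1}$ (\cite[Proposition 2.5]{rordam-winter}) to get a unital homomorphism into $p\A^{**}p$, and compress by $h$ to obtain $\Phi$ with image in $\A$. Your verification via the cone picture that $\Phi$ lands in $\A$ is, if anything, slightly more explicit than the paper's.
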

\begin{proof}
Write $f_j = \varphi_j(1)$ for $j=0,1$. Let $\pi_0$ and $\pi_1$ be the support homomorphisms of $\varphi_0$ and $\varphi_1$, so that $\varphi_j (a) = \pi_j(a)f_j$. 
 Define $f = f_1+f_2$.
Let $X$ be the space defined in Lemma \ref{Lemma:universal-space-commuting-contractions}, and let $a_0,a_1$ be the two positive elements from that lemma. Then there is a homomorphism $\psi \colon C_0(X) \to \A$ such that $\psi(a_0) = f_0$ and $\psi(a_1) = f_1$. We extend $\psi$ to a homomorphism $\psi^{**} \colon C_0(X)^{**} \to \A^{**}$. Let $g_0$, $g_1$, and $p$ be as in the discussion after Lemma \ref{Lemma:universal-space-commuting-contractions}. 
Set $\widetilde{g}_0 = \psi^{**}(g_0)$, $\widetilde{g}_1 = \psi^{**}(g_1)$, and $\widetilde{p} = \psi^{**}(p)$.
 Define $\widetilde{\varphi}_j \colon M_{n+j} \to \widetilde{p} \A^{**} \widetilde{p}$ for $j=0,1$ by $\widetilde{\varphi}_j(a) = \pi_j(a)g_j$. Then 
$\widetilde{\varphi}_0$ and $\widetilde{\varphi}_1$ are order zero maps from $M_n$ and $M_{n+1}$, respectively, to
$\widetilde{p}\A^{**}\widetilde{p}$ such that $\varphi_0(1)+\varphi_1(1) = \widetilde{p}$. By \cite[Proposition 2.5]{rordam-winter}, they 
therefore give rise to a unital homomorphism $\widetilde{\varphi}\colon \Zh_{n,n+1} \to
\widetilde{p}\A^{**}\widetilde{p}$. Now define $\Phi \colon \Zh_{n,n+1} \to \A^{**}$ by
$\Phi(a) = \widetilde{\varphi}(a)f$ for $a \in \Zh_{n,n+1}$. Then
$\Phi$ is an order zero map with $\Phi(1) = f$, and since $\Zh_{n,n+1}$ is generated by the images of the homomorphisms used in \cite[Proposition 2.5]{rordam-winter}, it is straightforward to verify
that its image is in $\A$.
\end{proof}

\begin{Cor}
\label{cor:d-order-zero-maps}
Let $\A$ be a $C^*$-algebra. Suppose $\varphi_0^{(k)}\colon M_{n} \to \A$, and $\varphi_1^{(k)}\colon M_{n+1} \to \A$, for
$k=0,1,\ldots d$, are contractive order zero maps with commuting images such that 
$$
\sum_{k=0}^d \left [ \varphi_0^{(k)}(1) + \varphi_1^{(k)}(1) \right ] \leq 1 
 .
$$ 
Then there is an order zero map $\Phi\colon \Zh_{n,n+1} \to \A$ with $\Phi(1) =
\displaystyle \sum_{k=0}^d \left [ \varphi_0^{(k)}(1) + \varphi_1^{(k)}(1) \right ] $.

Suppose furthermore $G$ is a discrete group acting on $\A$, and $F \subset
\Zh_{n,n+1}$ is a given finite subset. For any finite subset $G_0 \subseteq
G$ and any $\delta>0$ there exists an $\eps>0$ such that the following holds. If
$\left \|\alpha_g ( \varphi_j^{(k)}(x) ) - \varphi_j^{(k)}(x)\right \| < \eps\|x\|$ for $j = 0,1$, for all $x
\in M_{n+j}$, and for all $g \in G_0$, then there exists an order zero map 
$\Phi$ as above which furthermore satisfies $\|\alpha_g ( \Phi(x) ) - \Phi(x)\|<\delta$ for all $x \in F$
and all $g \in G_0$. 
\end{Cor}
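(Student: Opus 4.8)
The plan is to prove the first assertion by repeating the construction in the proof of the preceding Lemma, after first merging the $d+1$ order zero maps out of $M_n$ (respectively $M_{n+1}$) into a single support homomorphism in the bidual. Write $h_j^{(k)} = \varphi_j^{(k)}(1)$ and $f = \sum_{k=0}^d \big(h_0^{(k)} + h_1^{(k)}\big)$, and let $\pi_j^{(k)}$ be the support homomorphism of $\varphi_j^{(k)}$, so that $\varphi_j^{(k)}(a) = \pi_j^{(k)}(a)\,h_j^{(k)}$ with $\pi_j^{(k)}$ taking values in $\A^{**}$ and commuting with $h_j^{(k)}$. Since the images of all the $\varphi_j^{(k)}$ commute, the subalgebras they generate commute, hence so do their weak closures in $\A^{**}$; in particular the ranges of the various $\pi_j^{(k)}$ commute with one another and with the commutative von Neumann subalgebra $M \subseteq \A^{**}$ generated by all the $h_j^{(k)}$.

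First I would merge the support homomorphisms. Let $p_j^{(k)} \in M$ be the support projection of $h_j^{(k)}$. Working inside the commutative algebra $M$, I disjointify: I choose pairwise orthogonal projections $q_j^{(k)} \le p_j^{(k)}$ in $M$ with $\sum_{j,k} q_j^{(k)} = \bigvee_{j,k} p_j^{(k)}$, the support projection $\tilde p$ of $f$. Because each $q_0^{(k)}$ lies in $M$ and so commutes with every $\pi_0^{(k')}(a)$, the formula $\Pi_0(a) = \sum_{k} \pi_0^{(k)}(a)\, q_0^{(k)}$ defines a unital homomorphism $\Pi_0 \colon M_n \to \big(\sum_k q_0^{(k)}\big)\A^{**}\big(\sum_k q_0^{(k)}\big)$, and likewise a homomorphism $\Pi_1 \colon M_{n+1} \to \A^{**}$; their ranges commute. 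Next, exactly as in the discussion following Lemma \ref{Lemma:universal-space-commuting-contractions}, I choose positive contractions $g_j^{(k)} \in M$ with support projection $p_j^{(k)}$ and $\sum_{j,k} g_j^{(k)} = \tilde p$. Then $\Theta_0(a) = \Pi_0(a)\sum_k g_0^{(k)}$ and $\Theta_1(a) = \Pi_1(a)\sum_k g_1^{(k)}$ are contractive order zero maps into $\tilde p\,\A^{**}\tilde p$ with commuting images satisfying $\Theta_0(1)+\Theta_1(1) = \tilde p$, so by \cite[Proposition 2.5]{rordam-winter} they assemble into a unital homomorphism $\Sigma \colon \Zh_{n,n+1} \to \tilde p\,\A^{**}\tilde p$. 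Finally I set $\Phi(x) = \Sigma(x)\,f$; this is an order zero map with $\Phi(1) = f$, and that its range lies in $\A$ (rather than merely in $\A^{**}$) is verified by the same computation as at the end of the proof of the preceding Lemma, the point being that each $g_j^{(k)} f$ is a continuous function of the $h$'s lying in $\A$.

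For the equivariant statement I would first record that the construction above is natural enough to preserve \emph{exact} invariance: if $\alpha_g(\varphi_j^{(k)}(x)) = \varphi_j^{(k)}(x)$ for all $x$ and all $g \in G_0$, then $\alpha_g$ fixes each $h_j^{(k)}$, hence fixes $M$ pointwise (so it fixes the projections $p_j^{(k)}$, $q_j^{(k)}$ and the functions $g_j^{(k)}$), and by uniqueness of the support homomorphism it fixes each $\pi_j^{(k)}$ on the relevant support; consequently it fixes $\Pi_0$, $\Pi_1$, $\Sigma$, and therefore $\Phi$. The approximate statement should then follow by a standard reindexing argument. If it failed, I would obtain for each $m$ a system $\big(\varphi_{j,m}^{(k)}\big)$ with invariance defect less than $1/m$ for which no admissible $\Phi$ (that is, no order zero $\Phi\colon\Zh_{n,n+1}\to\A$ with $\Phi(1)=\sum_{j,k}\varphi_{j,m}^{(k)}(1)$) is $\delta$-equivariant on $F$. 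Passing to the sequence algebra $\A_\infty$, the sequences $\big(\varphi_{j,m}^{(k)}\big)_m$ define order zero maps $M_{n+j} \to \A_\infty$ with commuting images that are \emph{exactly} $\overline{\alpha}_g$-invariant for $g \in G_0$; applying the first part in $\A_\infty$ and invoking the exact-invariance observation yields an order zero map $\Phi_\infty \colon \Zh_{n,n+1} \to \A_\infty$, with $\Phi_\infty(1) = \sum_{j,k}\big[(\varphi_{0,m}^{(k)}(1))_m + (\varphi_{1,m}^{(k)}(1))_m\big]$, that is honestly $\overline{\alpha}_g$-invariant.

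It then remains to descend $\Phi_\infty$ to genuine maps on $\A$: lifting it to a sequence of completely positive maps and correcting these to honest order zero maps with the prescribed unit would produce, for all large $m$, an admissible $\Phi$ for the $m$-th system that is $\delta$-equivariant on $F$, contradicting the choice of that system. I expect this correction step to be the main obstacle: one must turn an approximately order zero completely positive map whose value at $1$ is only approximately correct into an exactly order zero map hitting the prescribed element $\sum_{j,k}\varphi_{j,m}^{(k)}(1)$ exactly, all without spoiling approximate equivariance. This is where I would use the weak stability of the order zero relations for $\Zh_{n,n+1}$ (equivalently, projectivity of the cone), taking care that the normalization fixing the value at $1$ is carried out $G_0$-approximately equivariantly.
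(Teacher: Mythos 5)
Your reduction of the first assertion to the two-map lemma via disjointification in $\A^{**}$ does not work. With $Q_j = \sum_k q_j^{(k)}$ and $G_j = \sum_k g_j^{(k)}$ you have $\Theta_j(1) = \Pi_j(1)G_j = Q_jG_j$, so $\Theta_0(1)+\Theta_1(1) = Q_0G_0+Q_1G_1 = \tilde p - Q_0G_1 - Q_1G_0$, and the error terms vanish only when the supports of the various $h_j^{(k)}$ are essentially disjoint. Already in the scalar case $\A = \C$, $d=0$, $h_0 = h_1 = \tfrac12$, the disjointification forces (say) $q_0 = 1$, $q_1 = 0$, and with $g_0=g_1=\tfrac12$ you get $\Theta_0(1)+\Theta_1(1)=\tfrac12 \neq \tilde p = 1$, so \cite[Proposition 2.5]{rordam-winter} cannot be applied. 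A second, independent problem is that the $q_j^{(k)}$ are Borel (not continuous) functions of the $h$'s, so expressions such as $q_0^{(k)}G_0f$ need not lie in the $C^*$-algebra generated by the $h$'s, and the range of your $\Phi$ need not lie in $\A$; the discussion after Lemma \ref{Lemma:universal-space-commuting-contractions} works precisely because the $g_j$ there are \emph{multipliers} of $C_0(X)$. The underlying difficulty is that merging $d+1$ commuting order zero maps out of the \emph{same} matrix algebra into one with unit $\sum_k\varphi_0^{(k)}(1)$ is genuinely nontrivial (note that the naive sum $\sum_k\varphi_0^{(k)}$ is completely positive but not order zero). The paper handles this by passing to the universal $C^*$-algebra $D_n^{(d+1)}$ generated by $d+1$ commuting order zero copies of $M_n$ and invoking \cite[Lemmas 5.2 and 5.3]{HWZ}, which produce, at the level of that universal algebra, an order zero map $\theta\colon M_n\to D_n^{(d+1)}$ with $\theta(1)$ equal to any prescribed positive central element; pushing forward along the classifying homomorphism $D_n^{(d+1)}\to\A$ gives the merged map, and only then is the two-map lemma applied.

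For the equivariant refinement, the paper's proof is a direct continuity argument: the classifying homomorphism $\pi\colon D_n^{(d+1)}\to\A$ is $\eps$-invariant on a compact generating set, hence $\delta$-invariant on the fixed compact set $\theta(\mathrm{ball}(M_n))$ for $\eps$ small, and similarly for the final map on $F\subseteq\Zh_{n,n+1}$. Your contradiction-and-sequence-algebra route is plausible in outline, but, as you yourself note, it hinges on descending an exactly invariant order zero map $\Zh_{n,n+1}\to\A_{\infty}$ with a \emph{prescribed} unit to order zero maps on $\A$ hitting the prescribed elements exactly while remaining approximately equivariant; that lifting/normalization step is exactly where the content lies and is not supplied. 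Even granting it, the exact-invariance naturality you invoke rests on the first part, which as it stands is not correct.
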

\begin{proof}
Let $D_n^{(m)}$ be the kernel of the canonical map $(CM_n^+)^{\otimes m} \to \C$. It follows by induction from \cite[Lemma 5.2]{HWZ} that $D_n^{(m)}$ satisfies the following universal property with respect to $m$ commuting order zero contractions from $M_n$. Let 
$\eta\colon M_n \to CM_n = C_0((0,1],M_n)$ be the order zero map given by $\eta(a)(t) = ta$. For $j=1,2,\ldots,m$, we define $\eta_j\colon M_n \to D_n^{(m)}$ to be the $j$'th coordinate map 
$$
\eta_j(a) = 1\otimes 1 \otimes \cdots \otimes 1 \otimes \eta(a) \otimes 1 \otimes \cdots \otimes 1 . 
$$
Then if $\A$ is any $C^*$-algebra and $\sigma_1,\sigma_2,\ldots,\sigma_m \colon M_n \to \A$ are contractive order zero maps with commuting images, there exists a (unique) homomorphism $\pi \colon D_n^{(m)} \to \A$ such that $\sigma_j = \pi \circ \eta_j$ for $j=1,2,\ldots,m$. 

By \cite[Lemma 5.3]{HWZ}, if $h$ is any positive element in the center $Z(D_n^{(m)})$, then there exists an order zero map $\theta\colon  M_n \to D_n^{(m)}$ with $\theta(1) = h$ (and with $\|\theta\| = \|h\|$). In particular, it follows that if $\varphi_0^{(0)}, \varphi_0^{(1)},\ldots,\varphi_0^{(d)} \colon M_n \to \A$ are order zero maps as in the statement, then there exists an order zero map $\varphi_0 \colon M_n \to \A$ with $ \varphi_0(1) = \sum_{k=0}^d  \varphi_0^{(k)}(1)$. Similarly, there is an order zero map $\varphi_1 \colon M_{n+1} \to \A \cap \varphi_0(M_n)'$ such that $\varphi_1(1) = \sum_{k=0}^d  \varphi_1^{(k)}(1)$. Therefore, the existence of the map $\Phi$ as in the statement of the corollary follows from the previous lemma. 

The refined statement involving the discrete group action is a modification of the above argument. The added assumption says that $\|\alpha_g(\pi(x)) - \pi(x)\|<\eps$ for all $x$ in a generating set of $D_n^{(m)}$ and for all $g \in G_0$. Therefore, if $\eps$ is chosen to be sufficiently small, we have $\|\alpha_g(\pi(\theta(x))) - \pi(\theta(x))\|<\delta$ for all $x$ in the unit ball of $M_n$. We omit the details.
\end{proof}

We require the following simple adaptation of \cite[Lemma 5.4]{HWZ} to the nonunital
setting, that in turn is based on \cite[Lemma 2.4]{HW}. It uses the
characterization of $\mathcal{D}$-stability from \cite[Proposition 4.1]{HRW}. We recall the notation $\overline{\alpha}$ from after Definition \ref{def: positive Rokhlin finite groups}.

\begin{Lemma}
\label{lemma:enough-to-embed-building-block}
Let $\A$ and $\B$ be separable $C^*$-algebras, with $\B$ unital. Let $G$ be a discrete
countable group with an action $\alpha\colon G \to \aut(\A)$. Suppose that $(\B_n)_{n=1,2,3,\ldots}$ is a
sequence of nuclear subalgebras of $\B$ with dense
union such that $1_{\B} \in \B_n$ for all $n$. Suppose that for any $n \in \N$, any finite subset $F \subseteq \B_n$,  any
$\eps>0$, and any finite set $G_0 \subseteq G$ there is a completely positive contraction $\gamma\colon \B_n
\to \A_{\infty}\cap \A '$ such that:
\begin{enumerate}
\item $\|(\overline{\alpha}_g(\gamma(x)) - \gamma(x))a\|<\eps$ for all $x \in F$, all
$g \in G_0$, and all $a \in \A$ with norm at most 1.
\item $a\gamma(1) = a$ for all $a \in \A$.
\item $a(\gamma(xy) - \gamma(x)\gamma(y)) = 0$ for all $a \in \A$ and $x,y \in
\B_n$.
\label{lemma:enough-to-embed-building-block:multiplicativity-condition}
\end{enumerate}
Then there is a completely positive contraction $\Gamma \colon \B \to \A_{\infty}\cap \A '$ satisfying:
\begin{enumerate}[label=(\arabic*$\, '$)]
\item $a\overline{\alpha}_g(\Gamma(x)) = a\Gamma(x)$ for all $a \in \A$, $x \in \B$, and $g \in G$.
\item $a\Gamma(1) = a$ for all $a \in \A$.
\item $a(\Gamma(xy) - \Gamma(x)\Gamma(y)) = 0$ for all $a \in \A$ and  $x,y \in \B$.
\end{enumerate}

If $\B$ is furthermore strongly self absorbing then the full
crossed product $\A \rtimes_{\alpha} G$ absorbs $\B$ tensorially.
\end{Lemma}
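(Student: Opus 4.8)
The plan is to prove the two assertions in turn: first the existence of the single map $\Gamma$ with properties (1$\,'$)--(3$\,'$), obtained from the building blocks by a reindexing argument; and then, assuming $\B$ strongly self-absorbing, to feed $\Gamma$ into the characterization of $\B$-stability of \cite[Proposition 4.1]{HRW}.

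For the construction of $\Gamma$ I would argue by a diagonal sequence. Using separability, fix increasing finite sets $F_n \subseteq \B_n$ with $\bigcup_n F_n$ dense in $\B$, increasing finite sets $K_n$ in the unit ball of $\A$ with dense union, and an exhaustion $G_0(1) \subseteq G_0(2) \subseteq \cdots$ of $G$. For each $n$ apply the hypothesis to $(F_n, 1/n, G_0(n))$ to get a completely positive contraction $\gamma_n \colon \B_n \to \A_{\infty} \cap \A'$, and (using nuclearity of $\B_n$) lift it by Choi--Effros to a completely positive contraction $\big(\gamma_n^{(k)}\big)_k \colon \B_n \to \ell^{\infty}(\N,\A)$. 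Conditions (1)--(3) together with $\gamma_n(\B_n) \subseteq \A'$ say precisely that the relevant $\limsup_k$ defects (in invariance, unitality, multiplicativity, and centrality, each as prescribed) are $<1/n$ or $0$; hence there are strictly increasing indices $k_n$ at which all these defects are $<1/n$ on the test data from $F_n,K_n,G_0(n)$. Setting $\Gamma^{(n)} := \gamma_n^{(k_n)}$ (and $0$ where undefined) gives a completely positive contraction from $\bigcup_n \B_n$ into $\ell^{\infty}(\N,\A)$; composing with the quotient onto $\A_{\infty}$ and extending by continuity (everything is contractive) yields $\Gamma$. Since the test sets exhaust, the defects tend to $0$, giving $\Gamma(\B)\subseteq \A_{\infty}\cap \A'$ and (1$\,'$)--(3$\,'$); for (1$\,'$) one uses in addition that $\overline{\alpha}_g$ preserves $\A_{\infty}\cap \A'$.

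For the self-absorption statement, write $E = \A \rtimes_{\alpha} G$, with $\A \subseteq E$ and implementing unitaries $u_g \in M(E)$ satisfying $u_g a u_g^* = \alpha_g(a)$. By \cite[Proposition 4.1]{HRW} it suffices to produce a unital homomorphism from $\B$ into the central sequence algebra $(E_{\infty}\cap E')/\mathrm{Ann}(E)$; I would obtain it by showing that $\Gamma$, viewed in $E_{\infty}$ via $\A_{\infty}\subseteq E_{\infty}$, descends to such a map. The key computation is that each commutator lands in the annihilator: for $x \in \B$, $a \in \A$, $g \in G$ one has $[\Gamma(x),u_g] = \big(\Gamma(x)-\overline{\alpha}_g(\Gamma(x))\big)u_g$, and writing $w_g := \Gamma(x)-\overline{\alpha}_g(\Gamma(x))$, condition (1$\,'$) gives $a w_g = w_g a = 0$ for all $a\in\A$; testing $[\Gamma(x),u_g]$ against a spanning element $a u_h \in E$ and using $u_g a = \alpha_g(a) u_g$ then shows $[\Gamma(x),u_g]\,\cdot\,(au_h)=0$ and $(au_h)\cdot[\Gamma(x),u_g]=0$, so $[\Gamma(x),u_g]\in \mathrm{Ann}(E)$. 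Combined with $[\Gamma(x),a]=0$ for $a\in\A$, this gives $[\Gamma(x),z]\in \mathrm{Ann}(E)$ for all $z\in E$. The same bookkeeping upgrades (2$\,'$) and (3$\,'$): the multiplicativity and unitality defects, which (1$\,'$)--(3$\,'$) control only modulo $\mathrm{Ann}(\A)$, are annihilated by every $au_h\in E$ because $\overline{\alpha}_g$ maps $\mathrm{Ann}(\A)$ into itself and conjugation by $u_g$ maps $\A$ into $\A$. Thus $\Gamma$ induces a unital homomorphism $\B \to (E_{\infty}\cap E')/\mathrm{Ann}(E)$, and \cite[Proposition 4.1]{HRW} yields $E \otimes \B \cong E$.

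The main obstacle is precisely this last, nonunital, bookkeeping: $\Gamma$ satisfies all of its defining relations only after multiplication by elements of $\A$, whereas the characterization of $\B$-stability demands relations modulo $\mathrm{Ann}(E)$. The point that makes it work, and the step I would be most careful about, is that all the noncommutativity of $E$ is carried by the unitaries $u_g$, which never occur in $E$ unaccompanied by an $\A$-factor; that factor is exactly what one multiplies against to invoke the ``modulo $\mathrm{Ann}(\A)$'' relations, while $\Gamma(\B)\subseteq\A'$ and the invariance of $\mathrm{Ann}(\A)$ under $\overline{\alpha}_g$ let one transport the relations across $\Gamma$-values and across the unitaries. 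I would verify explicitly that (1$\,'$) is applied on the correct side and that all finitely many group elements appearing in a given test set are accommodated, since that is where a side or sign error would hide.
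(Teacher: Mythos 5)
Your overall strategy is the same as the paper's: a diagonal argument to produce $\Gamma$, followed by pushing $\Gamma$ into the central sequence algebra of $E=\A\rtimes_\alpha G$ and invoking \cite[Proposition 4.1]{HRW}. The second half is done correctly and in fact more explicitly than in the paper: your computation $[\Gamma(x),u_g]=\bigl(\Gamma(x)-\overline{\alpha}_g(\Gamma(x))\bigr)u_g$, together with $\Gamma(\B)\subseteq\A'$ and the observation that $a\,\overline{\alpha}_h(w)=\overline{\alpha}_h\bigl(\alpha_{h^{-1}}(a)\,w\bigr)$ transports the ``modulo $\mathrm{Ann}(\A)$'' relations across the unitaries, is exactly the content of the paper's one-line verification that $\gamma(x)$ commutes with the spanning elements $au_g$ and that the hypotheses of \cite[Proposition 4.1]{HRW} hold.

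There is, however, one localized gap in your construction of $\Gamma$. Each $\gamma_n$ is defined only on $\B_n$, and the lemma does \emph{not} assume the $\B_n$ are nested; it only assumes $\bigcup_n\B_n$ is dense. Your choices of ``increasing finite sets $F_n\subseteq\B_n$,'' the assignment ``$\Gamma^{(n)}:=\gamma_n^{(k_n)}$ and $0$ where undefined,'' and the final ``extend by continuity from $\bigcup_n\B_n$'' all presuppose that $\bigcup_n\B_n$ is at least a linear subspace on which a single well-defined completely positive contraction has been produced --- which fails unless the $\B_n$ increase (and even then the $0$-padding only gives linearity modulo $c_0(\N,\A)$, a point worth a sentence). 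The paper's proof devotes its first paragraph precisely to this issue: using nuclearity of $\B_n$ it factors $\gamma_n$ approximately through matrix algebras, extends the downward maps to all of $\B$ by Arveson's extension theorem, and re-diagonalizes to obtain maps $\gamma_n'\colon\B\to\A_\infty\cap\A'$ with the same properties, defined on a common domain; only then does it run the diagonalization you describe. Your argument is fine as written in the application (where $\B=\Zh$ is an increasing union of the $\Zh_{n_k,n_k+1}$), but to prove the lemma as stated you need either to add the Arveson extension step or to restrict the hypothesis to nested $\B_n$. Everything else checks out.
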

\begin{proof} 
We first claim that the maps $\gamma$ in the hypothesis can be assumed to be defined on all of $\B$. To see that, since $\B_n$ is nuclear, we can choose a sequence of completely positive maps $\theta_1,\theta_2,\ldots$ from $B_n$ to $\A_{\infty}\cap \A '$ which admit a factorization via completely positive maps $\psi_j$ and $\varphi_j$ as in the following diagram:
$$
\xymatrix{
\B_n \ar[r]^{\psi_j} \ar@/_1pc/[rr]_{\theta_j} & M_k \ar[r]^{\varphi_j} &  \A_{\infty}\cap \A ' }  ,
$$
and such that
$\displaystyle \lim_{j \to \infty} \theta_j(x) = \gamma(x)$ for all $x \in B_n$. 
Using the Arveson extension theorem, for each $j=1,2,\ldots$ we can extend  $\psi_j$ to all of $B$. We write $\overline{\theta}_j$ for the composition of $\varphi_j$ with the chosen extension of $\psi_j$ to all of $\B$. Lift $\overline{\theta}_j$ to a completely positive map
$$
(\overline{\theta}_j(1),\overline{\theta}_j(2),\ldots) \colon \B \to l^{\infty}(\N,A)
$$
One checks that for a suitable increasing sequence $(n_j)_{j=1,2,\ldots}$, the composition of the map $(\overline{\theta}_1(n_1),\overline{\theta}_2(n_2),\ldots)  \colon \B \to l^{\infty}(A)$ with the quotient map onto $\A_{\infty}$ yields a completely positive map $\gamma' \colon \B \to \A_{\infty} \cap \A'$  which satisfies the first three conditions of the lemma.

Pick finite sets $F_n \subseteq B_n$ whose union is dense in $B$ and pick increasing finite subsets $G_n \subseteq G$ whose union is all of $G$. Choose maps $\varphi_n\colon B \to \A_{\infty} \cap \A'$ as in the statement, for $\eps = 1/n$, extended to $B$ as discussed above. For each such map we choose a completely positive contractive lifting to a map $\widetilde{\gamma}_n = (\gamma_n(1), \gamma_n(2),\ldots) \colon  B \to l^{\infty}(\N,\A)$. A standard diagonalization argument now yields an increasing sequence $(m_n)_{n \in \N}$ such that the map $(\gamma_1(m_1),\gamma_2(m_2),\ldots)$, composed with the quotient map, yields a map $B \to \A_{\infty} \cap \A'$ as required.

Assume now that $B$ is strongly self absorbing. 

The canonical inclusion $\A \hookrightarrow \A\rtimes_{\alpha}G$ induces an inclusion $\A_{\infty} \hookrightarrow (\A\rtimes_{\alpha}G)_{\infty}$. Pick a completely positive contraction $\gamma$ as in the statement. Composing with the canonical inclusion (and retaining the same notation), we can view $\gamma$ as a completely positive contraction from $B$ to $(\A\rtimes_{\alpha}G)_{\infty} \cap \A'$. 
Let $g \in G$, $a \in \A$ and $x \in \B$. Let $u_g \in M(\A\rtimes_{\alpha}G)$ be the canonical unitary corresponding to $g$. Then
$$
a u_g \gamma(x) = a \overline{\alpha}_g(\gamma(x)) u_g = a \gamma(x) u_g = \gamma(x) a u_g
 .
$$
Since the elements of the form $a u_g$ for $a \in \A$ and $g \in G$ span $\A \rtimes_{\alpha}G$, we find that $\gamma$ is a map into $(\A\rtimes_{\alpha}G)_{\infty} \cap (\A\rtimes_{\alpha}G) ' $, and that the conditions of \cite[Proposition 4.1]{HRW} are satisfied. Thus, $\A \rtimes_{\alpha}G$ is $B$-absorbing.
\end{proof}

\begin{Cor}
\label{Cor:enough-to-embed-several-order-zero-pairs}
Let $\A$ be a separable $C^*$-algebra. Let $G$ be a discrete countable group
with an action $\alpha\colon G \to \aut(\A)$. Let $d$ be a fixed natural number.
Suppose that for any $n$,  any $\eps>0$, and any finite set $G_0 \subseteq G$,
there are contractive order zero maps $\varphi_j^{(k)}\colon M_{n+j} \to 
\A_{\infty}\cap \A '$ for $j=0,1$ and $k=0,1,\ldots, d$, with commuting images, such that 
$\left \|\overline{\alpha}_g(\varphi_j^{(k)}(x)) - \varphi_j^{(k)}(x) \right \|<\eps\|x\|$ for all
$x \in M_{n+j}$ and all $g \in G_0$ and such that
$$
f= \sum_{k=0}^d \left [ \varphi_0^{(k)}(1) + \varphi_1^{(k)}(1) \right ]
$$ 
satisfies $f \leq 1$ and $af = a$ for all $a \in \A$. Then the full crossed
product $\A \rtimes_{\alpha} G$ is $\Zh$-absorbing.
\end{Cor}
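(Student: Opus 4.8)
The plan is to apply Lemma~\ref{lemma:enough-to-embed-building-block} with $\B = \Zh$, the Jiang-Su algebra, which is separable, unital, and strongly self-absorbing. First I would recall (see \cite{rordam-winter}) that $\Zh$ can be written as the limit of an inductive system $\Zh_{n_1,n_1+1} \to \Zh_{n_2,n_2+1} \to \cdots$ of dimension drop algebras with unital connecting homomorphisms; taking $\B_m$ to be the image of $\Zh_{n_m,n_m+1}$ in $\Zh$ gives an increasing sequence of nuclear unital subalgebras with dense union, each containing $1_{\Zh}$. Thus it will suffice to produce, for each building block (which we may identify with $\Zh_{n,n+1}$), each finite $F \subseteq \Zh_{n,n+1}$, each $\eps>0$, and each finite $G_0 \subseteq G$, a completely positive contraction $\gamma\colon \Zh_{n,n+1} \to \A_\infty \cap \A'$ satisfying conditions (1)--(3) of that lemma; its last clause will then yield that $\A \rtimes_\alpha G$ absorbs $\Zh$.

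To construct $\gamma$, I would invoke the hypothesis of the corollary with this same $n$, with the given $G_0$, and with a parameter $\eps'>0$ to be specified, obtaining contractive order zero maps $\varphi_j^{(k)}\colon M_{n+j} \to \A_\infty \cap \A'$ (for $j=0,1$ and $k=0,1,\ldots,d$) with commuting images, approximately $G_0$-invariant to within $\eps'\|x\|$, and with $f = \sum_{k}\big[\varphi_0^{(k)}(1)+\varphi_1^{(k)}(1)\big] \leq 1$ and $af=a$ for all $a\in\A$. Applying Corollary~\ref{cor:d-order-zero-maps} to the $C^*$-algebra $\A_\infty \cap \A'$ with the restricted action $\overline{\alpha}$, with $\delta<\eps$, and with the finite set $F$ above, and choosing $\eps'$ small enough as permitted by its refined statement, produces an order zero map $\Phi\colon \Zh_{n,n+1} \to \A_\infty \cap \A'$ with $\Phi(1)=f$ and $\|\overline{\alpha}_g(\Phi(x)) - \Phi(x)\| < \delta$ for all $x\in F$ and $g\in G_0$. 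Setting $\gamma = \Phi$, I note that $\gamma$ is completely positive with $\|\Phi\| = \|\Phi(1)\| = \|f\| \leq 1$, hence a completely positive contraction; condition (2) is immediate from $a\gamma(1)=af=a$, and condition (1) follows from the norm estimate on $\overline{\alpha}_g(\Phi(x)) - \Phi(x)$.

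The one point requiring real input is condition (3), approximate multiplicativity modulo the annihilator. By the structure theorem for order zero maps \cite[Theorem 3.3]{winter-zacharias-order-zero}, writing $\pi$ for the support homomorphism of $\Phi$, one has $\Phi(x)\Phi(y) = \pi(xy)\Phi(1)^2 = \Phi(xy)\Phi(1) = \Phi(xy)f$ for all $x,y\in\Zh_{n,n+1}$. Since $\Phi$ takes values in $\A_\infty \cap \A'$, every $\Phi(z)$ commutes with each $a\in\A$, so that for $a \in \A$,
$$
a\big(\Phi(xy) - \Phi(x)\Phi(y)\big) = a\Phi(xy)(1-f) = \Phi(xy)\,a(1-f) = 0,
$$
using $af=a$. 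Thus condition (3) holds exactly, and Lemma~\ref{lemma:enough-to-embed-building-block} gives that $\A \rtimes_\alpha G$ is $\Zh$-absorbing.

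I expect the main difficulty to be organizational rather than deep: one must quantify the refined (equivariant) half of Corollary~\ref{cor:d-order-zero-maps} correctly so that the approximate $G_0$-invariance of the assembled map $\Phi$ is secured by shrinking $\eps'$, and one must verify that the building-block presentation of $\Zh$ may indeed be run through the consecutive dimension drop algebras $\Zh_{n,n+1}$, so that the hypothesis (phrased for $M_n$ and $M_{n+1}$) matches the building blocks. The genuinely substantive step is the order zero identity $\Phi(x)\Phi(y) = \Phi(xy)\Phi(1)$ combined with $af=a$, which is exactly what converts a (non-multiplicative) order zero map into one that is multiplicative modulo $\mathrm{Ann}(\A)$.
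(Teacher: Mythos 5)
Your proposal is correct and follows essentially the same route as the paper: assemble the hypothesized order zero maps into a single order zero map $\Phi\colon \Zh_{n,n+1}\to \A_\infty\cap\A'$ via Corollary~\ref{cor:d-order-zero-maps} (with its equivariant refinement), use the order zero identity $\Phi(x)\Phi(y)=\Phi(xy)\Phi(1)$ together with $a\Phi(1)=a$ to get multiplicativity modulo $\mathrm{Ann}(\A)$, and then feed the dimension drop building blocks of $\Zh$ into Lemma~\ref{lemma:enough-to-embed-building-block}. Your write-up is somewhat more explicit about the contractivity of $\Phi$ and the choice of $\eps'$ than the paper's, but the argument is the same.
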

\begin{proof}
Using the notation
of Corollary \ref{cor:d-order-zero-maps}, for $n \in \N$ we obtain an order zero map
$\Phi_n \colon \Zh_{n,n+1} \to \A_{\infty} \cap \A'$ satisfying the first two conditions
of Lemma \ref{lemma:enough-to-embed-building-block}. Since $\Phi_n$ is an order zero map,
we have $\Phi_n(1)\Phi_n(xy) =\Phi_n(x)\Phi_n(y)$ for all $x,y \in \Zh_{n,n+1}$. So, for all $a \in \A$, since
$a\Phi_n(1) = a$, we have
$$
a(\Phi_n(xy) - \Phi_n(x)\Phi_n(y)) = a(\Phi_n(1)\Phi_n(xy) -\Phi_n(x)\Phi_n(y)) = 0
$$
as required. Now write $\Zh$ as an inductive limit of $C^*$-algebras of the form $\Zh_{n_k,n_k+1}$ for an increasing sequence $(n_k)_{k \in \N}$. Apply 
Lemma \ref{lemma:enough-to-embed-building-block} to get the conclusion.  
\end{proof}

\begin{proof}[Proof of Theorem \ref{Thm:permanence-Z-finite-groups}]
 We show that the conditions of Corollary
\ref{Cor:enough-to-embed-several-order-zero-pairs} hold.

Let $r$ be a given positive integer. Fix two order zero maps $\theta_0\colon M_{r}
\to \Zh$ and $\theta_1\colon M_{r+1}
\to \Zh$ with commuting ranges such that 
$\theta_0(1)+\theta_1(1) = 1$. 

We claim that there are completely positive contractions $\iota_0,\iota_1,\ldots,\iota_d\colon \Zh \to \A_{\infty} \cap \A '$
satisfying:
\begin{enumerate}
\item \label{Z-fg-unital} 
$a\iota_k(1) = a$ for all $a \in \A$.
\item \label{Z-fg-multiplicative}
$a(\iota_k(xy) - \iota_k(x)\iota_k(y)) = 0$ for all $a \in \A$ and all $x,y \in
\Zh$.
\item $[\overline{\alpha}_g(\iota_k(x)),\iota_l(y)] = 0$ for all $k,l \in \{1,2,\ldots,d\}$ with $k < l$, for all $g \in G$, and for all $x,y \in \Zh$.
\end{enumerate}
First, use \cite[Proposition 4.1(d)]{HRW} to choose $\iota_0\colon \Zh \to \A_{\infty} \cap \A '$ satisfying conditions (\ref{Z-fg-unital}) and (\ref{Z-fg-multiplicative}) above. To get $\iota_1$, lift $\iota_0$ to a completely positive contraction $(\psi_1,\psi_2,\ldots) \colon \Zh \to l^{\infty}(\N,\A)$. Choose an increasing sequence $F_1 \subseteq F_2 \subseteq \ldots$ of finite subsets of $\Zh$ with dense union. Choose an increasing sequence $(n_j)_{j\in \N}$ such that
$$
\|[\alpha_g(\psi_j(x)),\psi_{n_j}(y)]\|<1/j
$$ 
for all $x,y \in F_j$ and all $g \in G$. Define $\iota_1$ to be composition of the map $(\psi_{n_1},\psi_{n_2},\ldots)$ with the quotient map to $\A_{\infty}$. One readily checks that $\iota_1$ satisfies the required conditions. Proceeding inductively, we construct $\iota_2,\iota_3,\ldots,\iota_d$ in a similar way.

Let $\left( f^{(l)}_g \right)_{l=0,1,\ldots,d\, ; g \in G}$ be a family of Rokhlin elements in $\A_{\infty}
\cap \A'$, as in Lemma
\ref{Lemma:central-sequence-reformulation-finite-group}, which is furthermore
chosen to commute with $\overline{\alpha}_g(\iota_k(\Zh))$ for all $g \in G$ and all
$k=0,1,\ldots,d$ (using Remark \ref{Rmk:central-sequence-reformulation-finite-group-Moreover}).) 

For $j=0,1$ and $x \in M_{r+j}$ define
$$
\theta_j^{(k)} (x)= \sum_{g \in G} f_{g}^{(k)}\overline{\alpha}_g(\iota_k \circ
\theta_j(x)) 
 .
$$
The images of these maps are clearly fixed by the action of $G$ on $\A_{\infty}
\cap \A'$, and have commuting images. Set 
$$
f = \sum_{k=0}^d \left [ \theta_0^{(k)}(1) + \theta_1^{(k)}(1) \right ]
 . 
$$
If $a \in \A$ then for $k=0,1,\ldots,d$ we have
$$
\iota_k(\theta_0(1) + \theta_1(1)) a = a
$$
and thus for all $g \in G$ we have
$$
\overline{\alpha}_g\big (\iota_k(\theta_0(1) + \theta_1(1))\big ) a = a
$$
as well. Therefore, for  $k=0,1,\ldots,d$,
$$
\left (\theta_0^{(k)}(1) + \theta_1^{(k)}(1) \right )a =  \sum_{g \in G}
f_{g}^{(k)}\overline{\alpha}_g\big (\iota_k ( \theta_0(1) + \theta_1(1))\big ) a
= \sum_{g \in G} f_{g}^{(k)} a
\, ,
$$
so
$$
fa =  \sum_{k=0}^d \sum_{g \in G} f_{g}^{(k)} a = a
.
$$
One similarly checks that $f \leq 1$. Thus the family of maps $(\theta_j^{(k)})_{j=0,1  ;  k=0,1,\ldots,d}$ satisfies the
conditions of Corollary \ref{Cor:enough-to-embed-several-order-zero-pairs}.
\end{proof}

We now consider the behavior of finite Rokhlin dimension under extensions.

\begin{Lemma}
\label{Lemma:G-fixed-approximate-id}
Let $\alpha \colon G \to \aut(A)$ be an action of a compact Hausdorff group $G$ on a $C^*$-algebra $\A$. Let $\J \lhd \A$ be an invariant ideal. Then there is a quasicentral approximate identity for $\J$ in $\A$ which is contained in the fixed point algebra $\J^G$.
\end{Lemma}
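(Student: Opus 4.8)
The plan is to produce the desired approximate identity by first invoking the classical existence theorem for quasicentral approximate identities and then symmetrizing over the group by averaging against normalized Haar measure. Concretely, let $(u_\lambda)_\lambda$ be a quasicentral approximate identity for $\J$ consisting of positive contractions, whose existence is the standard result of Arveson (see also Akemann--Pedersen); I would index it so that $\|[u_\lambda, a]\| \to 0$ for every $a \in \A$. Since $G$ is compact, the action is point-norm continuous, so for each $a \in \A$ the orbit $\{\alpha_g(a) : g \in G\}$ is a norm-compact subset of $\A$. At the outset I would record the elementary but crucial uniformity principle: because each $u_\lambda$ is a contraction, both $u_\lambda x \to x$ and $[u_\lambda, x] \to 0$ hold \emph{uniformly} for $x$ ranging over any norm-compact set (cover it by finitely many $\eps$-balls and use $\|u_\lambda\| \le 1$).

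Next I would define the averaged elements
$$
\bar{u}_\lambda = \int_G \alpha_g(u_\lambda) \, dg,
$$
the Bochner integral against normalized Haar measure. Because $\J$ is a closed, $\alpha$-invariant, convex subcone of $\A$ and $g \mapsto \alpha_g(u_\lambda)$ is continuous with values in the positive contractions of $\J$, the integral $\bar{u}_\lambda$ is again a positive contraction lying in $\J$. Invariance of Haar measure gives $\alpha_h(\bar{u}_\lambda) = \int_G \alpha_{hg}(u_\lambda)\,dg = \bar{u}_\lambda$ for all $h \in G$, so $\bar{u}_\lambda \in \J^G$. It then remains to check that $(\bar{u}_\lambda)_\lambda$ is still an approximate identity for $\J$ and is still quasicentral.

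For both verifications I would use the identities $\alpha_g(u_\lambda)\,x = \alpha_g\big(u_\lambda\,\alpha_{g^{-1}}(x)\big)$ and $[\alpha_g(u_\lambda), a] = \alpha_g\big([u_\lambda, \alpha_{g^{-1}}(a)]\big)$, together with the fact that each $\alpha_g$ is isometric. Integrating and using $\int_G x\,dg = x$, these yield
$$
\|\bar{u}_\lambda x - x\| \le \sup_{g \in G} \big\| u_\lambda\,\alpha_{g^{-1}}(x) - \alpha_{g^{-1}}(x) \big\|,
\qquad
\|[\bar{u}_\lambda, a]\| \le \sup_{g \in G} \big\| [u_\lambda, \alpha_{g^{-1}}(a)] \big\|.
$$
In each case the supremum runs over the (compact) orbit of $x$, respectively of $a$, so by the uniformity principle recorded above both right-hand sides tend to $0$. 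This exhibits $(\bar{u}_\lambda)_\lambda$ as a quasicentral approximate identity contained in $\J^G$, as required.

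The step I expect to be the main obstacle is the passage from pointwise convergence of the chosen quasicentral approximate identity to convergence that is uniform over each group orbit; this is precisely what must survive the averaging, and it is what forces me to exploit both the compactness of $G$ (hence of orbits, via point-norm continuity) and the contractivity of the $u_\lambda$. Everything else --- positivity and $G$-invariance of $\bar{u}_\lambda$, and its membership in $\J$ --- is a routine consequence of the properties of the Bochner integral against Haar measure.
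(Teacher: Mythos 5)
Your proof is correct and follows exactly the route the paper takes: its entire proof reads ``Choose a quasicentral approximate identity for $\J$ in $\A$ and average it over the group.'' Your write-up simply supplies the details the paper leaves implicit --- invariance of the Haar average, and the uniform convergence over compact orbits needed to see that averaging preserves both the approximate identity property and quasicentrality --- and these details are all handled correctly.
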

\begin{proof}
Choose a quasicentral approximate identity for $\J$ in $\A$ and average it over the group.
\end{proof}

\begin{Prop}
\label{Prop:ideal-and-quotient-finite-group}
Let $\alpha\colon G \to \aut(\A)$ be an action with $\dimrokct(\alpha)=d$. 
\begin{enumerate}
\item Suppose $\B \subseteq \A$ is a $G$-invariant hereditary subalgebra. Let $\beta$ be the restriction of $\alpha$ to $\B$. Then $\dimrokct(\beta) \leq \dimrokct(\alpha)$. 
\item Suppose
$\J\lhd \A$ is an $\alpha$-invariant ideal. Then the restriction of $\alpha$ to
$\J$ and the induced action on $\A/\J$ both have Rokhlin dimension with
commuting towers at most $d$.
\end{enumerate}

\end{Prop}
\begin{proof}
Let $F \subset \B$ be a finite subset, and let $\eps>0$. We may assume without loss of generality that $\|b\|\leq 1$ for all $b \in F$. The $C^*$-algebra $\B$ has an approximate identity in the fixed point subalgebra $\B^G$ (using Lemma \ref{Lemma:G-fixed-approximate-id} with $\B$ thought of as an ideal in itself). By picking an element sufficiently far out in such an approximate identity, we can choose a positive contraction $e \in \B^G$ such that $\|eb - b\| \leq \eps/5$ and $\|be - b\|<\eps/5$ for all $b \in F$. Choose a $(d,F\cup\{e\},\eps/5)$-Rokhlin system  $\left (f_g^{(k)}\right )_{k=0,1,\ldots,d \, ; g \in G}$ in $\A$. For $k=0,1,\ldots,d$ and for all $g \in G$ let 
$$
x_g^{(k)}=ef_g^{(k)}e \in \B
\, .
$$
It is straightforward to verify that the family $\left (x_g^{(k)}\right )_{k=0,1,\ldots,d \, ; g \in G}$ forms a $(d,F,\eps)$-Rokhlin system for the action $\beta$.

For the second part, the case of the quotient action on $\A/\J$ is immediate, and restricting to an ideal is a special case of restricting to a hereditary subalgebra.
\end{proof}

\begin{Thm}
\label{Thm:extensions-finite-group}
Let $\alpha\colon G \to \aut(\A)$ be an action of a finite group $G$ on a $C^*$-algebra $A$. Suppose $\J\lhd \A$ is an
$\alpha$-invariant ideal. Suppose the restriction of $\alpha$ to $\J$ and the
induced action on $\A/\J$  have Rokhlin dimensions with commuting towers
$d_{\J}$ and $d_{\A/\J}$. Then $\dimrokct(\alpha) \leq d_{\J} +
d_{\A/\J} +1$.
\end{Thm}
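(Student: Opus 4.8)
The plan is to build a Rokhlin system for $\alpha$ by concatenating two families of towers: a family of $d_{\A/\J}+1$ towers coming from the quotient action, cut down by $(1-e)$, and a family of $d_{\J}+1$ towers coming from the ideal, cut down by $e$, where $e\in\J^G$ is a $G$-invariant quasicentral approximate identity element supplied by Lemma~\ref{Lemma:G-fixed-approximate-id}. Since the two families together comprise $(d_{\A/\J}+1)+(d_{\J}+1)=d_{\J}+d_{\A/\J}+2$ towers, this yields the bound $\dimrokct(\alpha)\le d_{\J}+d_{\A/\J}+1$. Fixing a finite set $F\subseteq\A$ of contractions and $\eps>0$, I will produce a $(d_{\J}+d_{\A/\J}+1,F,\eps)$-Rokhlin system.

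First I would apply Definition~\ref{def: nonunital Rokhlin finite groups} to the induced action $\overline{\alpha}$ on $\A/\J$ with data $(\pi(F),\eps_1)$ for a small auxiliary $\eps_1$, obtaining positive contractions $\overline{a}_g^{(l)}$ ($0\le l\le d_{\A/\J}$, $g\in G$), which by Remark~\ref{Rmk:finite-group-ortho} may be taken exactly orthogonal within each tower, and lift each to a positive contraction $a_g^{(l)}\in\A$. Because $\pi$ intertwines $\alpha$ and $\overline{\alpha}$, every Rokhlin defect of the family $(a_g^{(l)})$ --- the products $a_g^{(l)}a_h^{(l)}$ for $g\ne h$, the elements $\left(\sum_{l,g}a_g^{(l)}\right)b-b$, $[a_g^{(l)},b]$, $\alpha_h(a_g^{(l)})-a_{hg}^{(l)}$, and $[a_g^{(l)},a_h^{(k)}]$ --- lies in $\J$ up to an error of order $\eps_1$. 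I would then choose $e\in\J^G$ far enough along the $G$-invariant quasicentral approximate identity that $e$ nearly commutes with $F\cup\{a_g^{(l)}\}$ and, using the standard identity $\lim_\lambda\|(1-e_\lambda)x\|=\|x+\J\|$ for an approximate identity of $\J$, that $(1-e)$ nearly annihilates each of the finitely many defect elements above. Setting $f_g^{(l)}=(1-e)^{1/2}a_g^{(l)}(1-e)^{1/2}$, the cutdown converts these defects into genuine norm-small quantities: for instance $f_g^{(l)}f_h^{(l)}\approx(1-e)^{3/2}a_g^{(l)}a_h^{(l)}(1-e)^{1/2}$ is small because $a_g^{(l)}a_h^{(l)}\in\J$, while the $G$-invariance $\alpha_h(e)=e$ makes the equivariance defect equal to $(1-e)^{1/2}\bigl(\alpha_h(a_g^{(l)})-a_{hg}^{(l)}\bigr)(1-e)^{1/2}$.

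For the ideal family I would invoke $\dimrokct(\alpha|_\J)=d_\J$ to obtain a $(d_\J,F',\eps_2)$-Rokhlin system $(b_g^{(m)})$ in $\J$ (exactly orthogonal within towers, by Remark~\ref{Rmk:finite-group-ortho}), where the finite set $F'\subseteq\J$ is chosen to contain $e$, $e^{1/2}$, the elements $e^{1/2}a_g^{(l)}e^{1/2}$, and $e^{1/2}be^{1/2}$ and $e^{1/2}b$ for $b\in F$; put $c_g^{(m)}=e^{1/2}b_g^{(m)}e^{1/2}$. Verifying that the combined family $(f_g^{(l)})\cup(c_g^{(m)})$, reindexed as $d_{\J}+d_{\A/\J}+2$ towers, satisfies Definition~\ref{def: nonunital Rokhlin finite groups} is then a bookkeeping exercise in which quasicentrality of $e$ lets one commute $e$ past every element of $F$ and every $a_g^{(l)}$ at the cost of controllable errors. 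The unit condition comes out as $Tb\approx(1-e)b+eb=b$, the first summand from $(1-e)^{1/2}\sigma(1-e)^{1/2}b$ with $\sigma=\sum a_g^{(l)}$, and the second from $e^{1/2}\tau e^{1/2}b\approx eb$ with $\tau=\sum b_g^{(m)}$, using $\tau(e^{1/2}b)\approx e^{1/2}b$.

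I expect the genuine obstacle to be condition~(\ref{commuting tower assumption}) of Definition~\ref{def: nonunital Rokhlin finite groups} for the \emph{cross} pairs $[f_g^{(l)},c_h^{(m)}]$, since one family is cut by $(1-e)$ and the other by $e$. Reducing modulo the quasicentral errors gives $[f_g^{(l)},c_h^{(m)}]\approx[a_g^{(l)},b_h^{(m)}]\,(e-e^2)$, so what is needed is that the ideal elements $b_h^{(m)}$ approximately commute with the lifted quotient elements $a_g^{(l)}\in\A\smallsetminus\J$ --- a commutation across the ideal that is not part of the ideal's own Rokhlin data. The device that resolves this is precisely the inclusion of $e^{1/2}a_g^{(l)}e^{1/2}\in\J$ in $F'$: near-commutation of $b_h^{(m)}$ with this element yields $e^{1/2}[a_g^{(l)},b_h^{(m)}]e^{1/2}\approx0$, and since $[a_g^{(l)},b_h^{(m)}]$ nearly commutes with $e$ (by the Jacobi identity together with quasicentrality) one rewrites $[a_g^{(l)},b_h^{(m)}](e-e^2)\approx(1-e)^{1/2}\bigl(e^{1/2}[a_g^{(l)},b_h^{(m)}]e^{1/2}\bigr)(1-e)^{1/2}$, which is therefore small. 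The remaining intra-family commutators reduce in the same manner to the commuting-tower data already built into the quotient and ideal systems.
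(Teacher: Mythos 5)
Your proposal is correct and follows essentially the same route as the paper's proof: lift the quotient Rokhlin system with exact orthogonality (projectivity of the cone over $\C^n$), split $1=(1-e)+e$ using a $G$-invariant quasicentral approximate unit for $\J$, cut the quotient towers by $1-e$ and the ideal towers by $e$, and feed compressions of the lifted quotient elements (the paper uses $qx_g^{(k)}$, you use $e^{1/2}a_g^{(l)}e^{1/2}$) into the finite set for the ideal system to control the cross-commutators, yielding $d_{\A/\J}+d_\J+2$ towers. The only cosmetic difference is the sandwiching convention --- the paper forms $\sqrt{x_g^{(k)}}(1-q)\sqrt{x_g^{(k)}}$ and $\sqrt{y_g^{(k)}}\,q\,\sqrt{y_g^{(k)}}$ so that exact orthogonality within towers is preserved, whereas your $(1-e)^{1/2}a_g^{(l)}(1-e)^{1/2}$ gives only approximate orthogonality (sufficient by Remark~\ref{Rmk:finite-group-ortho}); also note that the translation defect $\alpha_h(a_g^{(l)})-a_{hg}^{(l)}$ is only near $\J$ after multiplying by an element of $F$, which is all that is needed since condition~(\ref{def-finite-group-permuted}) carries that multiplier.
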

\begin{proof}
We denote by $\pi\colon  \A \to \A/\J$ the quotient map, and by $\overline{\alpha}$ the
quotient action on $\A/\J$. Let $F \subseteq \A$ be a finite subset and let $\eps>0$. We assume without loss of generality that $\|a\| \leq 1$ for all $a \in F$.
Pick a $(d_{\A/\J},\pi(F),\eps/7)$-Rokhlin system $\left (b_g^{(k)}\right )_{k=0,1,\ldots,d_{\A/\J}\, ; g \in G}$ in $\A/\J$. Using Remark \ref{Rmk:finite-group-ortho}, we
may assume without loss of generality that  $b_h^{(k)}b_g^{(k)} =0$ for $k=0,1,\ldots,d$ and all $g,h\in G$ with $g \neq h$. 
Since the cone over $\C^n$ is projective, there are $x_g^{(k)}\in \A$, for $g \in G$ and $k=0,1,\ldots,d$, such that $0\leq x_g^{(k)} \leq 1$  and $\pi (x_g^{(k)}) = b_g^{(k)}$, and moreover $x_g^{(k)}x_h^{(k)} = 0$ whenever $g \neq h$.

For any $a
\in F$, for any $g\in G$, and for $k=0,1,\ldots,d$ we have
$\textrm{dist}\left ([x_g^{(k)},a],\J \right )<\eps/7$. So if $(e_{\lambda})_{\lambda \in \Lambda}$ is an approximate
identity for $\J$ then $\lim_{\lambda}\left \|[x_g^{(k)},a](1-e_{\lambda})\right \| < \eps/7$. Also, we have 
$$
\lim_{\lambda}\left \|\left (1-\sum_{k=0}^{d_{\A/\J}}\sum_{g \in G}
x_g^{(k)}\right ) a(1-e_{\lambda})\right \| =0
\quad {\mbox{and}} \quad 
\lim_{\lambda}\left \|[x_g^{(k)},x_h^{(l)}]a(1-e_{\lambda}) \right \|  =0 \,.
$$

By taking an element far enough out in a $G$-invariant quasicentral approximate identity for $J$ in $\A$ (see Lemma \ref{Lemma:G-fixed-approximate-id}), we can find 
 a positive contraction $q \in \J$ which satisfies the
following conditions:
\begin{enumerate}
\item
\label{2.10-quasicentral-q}
 $\displaystyle \|qa - aq\|<\frac{\eps}{21\cdot \# G \cdot (d_{\A/\J}+1)}$ for all
$$
a \in F \cup \left
\{ \alpha_h(x_g^{(k)}),\alpha_h\left (\sqrt{x_g^{(k)}} \right ) \mid g,h \in G, \;
k=0,1,\ldots,d_{\A/\J} \right \}
$$
\item 
\label{2.10-q-condition-2}
$\left \|[x_g^{(k)},a](1-q) \right \|<\eps/7$   for all $a \in F$.
\item $\alpha_g(q) = q$ for all $g \in G$.
\item $\displaystyle \left \| \left (1-\sum_{k=0}^{d_{\A/\J}}\sum_{g \in G}
x_g^{(k)}\right )a(1-q)\right \|<\eps/7$ for all $a \in F$.
\item 
\label{2.10-q-condition-5}
$\left \|\left (\alpha_h(x_g^{(k)}) - x_{hg}^{(k)} \right )a(1-q)\right \| <\eps/7$ for all $h,g \in G$, for all $a \in F$, and for 
$k=0,1,\ldots,d_{\A/\J}$.
\item $\left \|\left [x_g^{(k)},x_h^{(l)}\right ]a(1-q) \right \| < \eps/7$ for all $g,h \in G$, for all $a \in F$, and for $k,l
=0,1,\ldots,d_{\A/\J}$.
\end{enumerate}

Now, fix a finite set $\widetilde{F} \subseteq \J$ such that for all $a \in F$, all
$g\in G$, and $k=0,1,\ldots,d_{\A/\J}$ we have  $\textrm{dist}([x_g^{(k)},a],\widetilde{F})<\eps/7$. Let
$$
F_{\J} = \widetilde{F} \cup \{qa \mid a \in F\} \cup \{q\} \cup \{qx_g^{(k)} \mid g
\in G, k=0,1,\ldots,d_{\A/\J}\} 
\, .
$$
Set $M = \max \{\|f\| \mid f \in F_{\J}\} $. Choose $\delta \in (0,\eps/42)$ such that 
whenever $\B$ is a $C^*$-algebra and $b,c \in \B$ satisfy $0 \leq b \leq 1$, $\|c\|\leq M$, and $\|bc-cb\|<\delta$, then 
$$
\|b^{1/2}c - cb^{1/2}\| < \frac{\eps}{21\cdot \# G \cdot (d_{\J}+1)} \, .
$$
(That $\delta$ exists follows by approximating $b^{1/2}$ with polynomials in $b$.)

Choose a $(d_{\J},F_{\J},\delta)$-Rokhlin
system $\left (y_g^{(k)} \right )_{k=0,1,\ldots,d_J \, ; g \in G}$ in $\J$. We assume again that $y_g^{(k)}y_h^{(k)} = 0$ for $g
\neq h$. For $g \in G$ and $k=0,1,\ldots,d_{\J}$, we then have 
\begin{equation}
\label{2.10-y,q}
\left \| \left [\sqrt{y_g^{(k)}},f \right ] \right \|<\frac{\eps}{21\cdot \# G \cdot (d_{\J}+1)}
\end{equation} 
for all $f \in F_{\J}$.
Set 
\begin{equation}
\label{2.10-(*)}
z_g^{(k)} = \sqrt{y_g^{(k)}}q\sqrt{y_g^{(k)}} \, .
\end{equation}
Notice that 
\begin{equation}
\label{2.10-(**)}
\left \|z_g^{(k)} - y_g^{(k)}q \right \| < \frac{\eps}{21\cdot \# G \cdot (d_{\J}+1)}
\, .
\end{equation}

It follows now that  for all $g,h
\in G$, and $k=0,1,\ldots,d$ we have, using (\ref{2.10-(**)}) in the first step and $\delta<\eps/42$ in the last step,
\begin{align}
\label{2.10-E1}
\|\alpha_g(z_h^{(k)}) - z_{gh}^{(k)}\| & <  
\|\alpha_g(y_h^{(k)}q) - y_{gh}^{(k)}q\| + \frac{2\eps}{21(d_{\J} + 1)} 
\\ \nonumber
& =  
\|(\alpha_g(y_h^{(k)}) - y_{gh}^{(k)})q\| + \frac{2\eps}{21(d_{\J} + 1)}
<\delta + \frac{2\eps}{21(d_{\J} + 1)} < \frac{\eps}{7}
\, .
\end{align}

We write $x \approx_{\eps} y$ to mean $\|x-y\|<\eps$. 
For any $a \in F$, we have, using (\ref{2.10-(**)}) at the first step,
\begin{align}
\label{2.10-E7}
z_g^{(k)}a - az_g^{(k)} & 
  \approx_{\eps/7}   \;\;  y_g^{(k)}qa - qay_g^{(k)} \\ \nonumber
{} & \approx_{\eps/7}   \;\;  y_g^{(k)}qa - y_g^{(k)}qa = 0 \, ,
\end{align}
and, by using (\ref{2.10-(**)}),
\begin{equation}
\label{2.10-4X23_1}
\sum_{g \in G}\sum_{k=0}^{d_J} z_g^{(k)} \approx_{\eps/7} \sum_{g \in G}\sum_{k=0}^{d_J} y_g^{(k)}q \approx_{\eps/7} q  \;\; .
\end{equation}
Furthermore, for any $g,h \in G$ and $k,l=0,1,\ldots,d_{\J}$ we have
\begin{align}
\label{2.10-NewE}
\left \|z_g^{(k)}z_h^{(l)} - z_h^{(l)}z_g^{(k)} \right \|  & \leq
\left \|z_g^{(k)} - qy_g^{(k)} \right \| +
\left \|z_h^{(l)} - y_h^{(l)}q \right \| 
\\ \nonumber {} &  + 
\left \|z_h^{(l)} - qy_h^{(l)} \right \| +
\left \|z_g^{(k)} - y_g^{(k)}q \right \| +
\left \|q y_g^{(k)} y_h^{(l)} q - q y_h^{(l)} y_g^{(k)} q \right \| \, .
\end{align}
By (\ref{2.10-(**)}) and selfadjointness, each of the first four terms is less than $\eps/21$. The last term is at most $\left \| q \left [y_g^{(k)},y_h^{(l)} \right ] q \right \|<\delta$. Therefore
\begin{equation}
\label{2.10-E2}
\left \| \left [z_g^{(k)},z_h^{(l)} \right ] \right \|
 \leq \frac{4\eps}{21} + \delta <
\frac{5\eps}{21}  
\, .
\end{equation}
Set  $\widetilde{x}_g^{(k)} = \sqrt{x_g^{(k)}}(1-q)\sqrt{x_g^{(k)}}$. For $g,h \in G$ with
$g \neq h$ and $k=0,1,\ldots,d_{\A/\J}$, we have
\begin{equation}
\label{2.10-E8}
\widetilde{x}_g^{(k)}\widetilde{x}_h^{(k)} = 0 \, .
\end{equation} 
Now, by condition (\ref{2.10-quasicentral-q}) from the list of conditions the element $q$ was chosen to satisfy,
\begin{equation}
\label{2.10-xEst}
\|\widetilde{x}_g^{(k)} -  x_g^{(k)}(1-q) \| < \frac{\eps}{21\cdot \# G \cdot (d_{\A/\J}+1)}
\, ,
\end{equation}
so for all $a \in F$, for all $g,h \in G$, and for $k=0,1,\ldots,d_{\A/\J}$
we have
\begin{align}
\label{2.10-E5}
\|(\alpha_h(\widetilde{x}_g^{(k)}) - \widetilde{x}_{hg}^{(k)})a\| & < \|(\alpha_h(x_g^{(k)})
- x_{hg}^{(k)})(1-q)a\|+\frac{2\eps}{21} 
\\ \nonumber
{}& < 
\|(\alpha_h(x_g^{(k)})
- x_{hg}^{(k)})a(1-q)\|+\frac{2\eps}{21} + \frac{\eps}{21(d_{\A/\J} +1)}  
<
\frac{2\eps}{7}  
.
\end{align}
Similarly, at the second step using (\ref{2.10-xEst}) and conditions (\ref{2.10-quasicentral-q}) and (\ref{2.10-q-condition-2}) from the list of conditions $q$ was chosen to satisfy,
\begin{align}
\label{2.10-x,a}
\left \| \left [\widetilde{x}_g^{(k)},a \right ] \right \| 
\leq & 2 \left \| \widetilde{x}_g^{(k)} - x_g^{(k)}(1-q) \right \|
+
\|(1-q) a - a (1-q)\| + 
\left \| (x_g^{(k)}a - ax_g^{(k)})(1-q) \right \| \\ \nonumber
< & \frac{2 \eps}{21 \cdot \# G \cdot (d_{A/J}+1)} + \frac{\eps}{7} + \frac{\eps}{7} <
\eps \, ,
\end{align}
and
\begin{align}
\label{2.10-4X23_2}
  \left \| \left (\sum_{k=0}^{d_{\A/\J}}\sum_{g \in G} \widetilde{x}_g^{(k)}\right )a
- a(1-q)\right \| 
 & {} \\ \nonumber
  {}  & \kern-8em \leq 
\sum_{k=0}^{d_{\A/\J}}\sum_{g \in G} \left \| \widetilde{x}_g^{(k)} - x_g^{(k)}(1-q)
\right \| 
\\ \nonumber {} & \kern-6em
 +  \left \| \sum_{k=0}^{d_{\A/\J}}\sum_{g \in G} x_g^{(k)} \right \| \cdot
\|(1-q)a - a(1-q)\|
+
\left \| \sum_{k=0}^{d_{\A/\J}}\sum_{g \in G} x_g^{(k)} a(1-q) \right \| 
\\ \nonumber {}& \kern-8em
\leq   
(1+d_{\A/\J})\cdot \# G \cdot \frac{\eps}{21 \cdot \# G \cdot (d_{\A/\J} + 1)} 
\\ \nonumber {}& \kern-6em + 
 (1+d_{\A/\J})\cdot \# G \cdot \frac{\eps}{21\cdot \# G \cdot (d_{\A/\J} + 1)} + \frac{\eps}{7} \\ \nonumber
 {}& \kern-8em <\frac{2\eps}{7}
\end{align}
for all $a \in F$. For the approximate commutation condition, for all $a \in F$ we have, using an argument similar to (\ref{2.10-NewE}) at the first step, $\|1-q\|\leq 1$ at the second step, and $\left \| \left [ x_g^{(k)}, x_h^{(l)} \right ] \right \|\leq 2$ and condition (\ref{2.10-q-condition-5}) from the list of conditions for the choice of $q$ at the third step,
\begin{align}
\label{2.10-E3}
\left \| \left [\widetilde{x}_g^{(k)},\widetilde{x}_h^{(l)} \right ]a \right \| 
& \leq 
\left \|(1-q)\left [x_g^{(k)},x_h^{(l)} \right ] (1-q)a \right \| + \frac{4\eps}{21} 
\\ \nonumber
{} & \leq 
\left \| \left [x_g^{(k)},x_h^{(l)} \right ] \big ( (1-q)a - a(1-q) \big ) \right \| + \left \|\left [x_g^{(k)},x_h^{(l)} \right ] a (1-q) \right \|+  \frac{4\eps}{21} 
\\ \nonumber
{} & < 2 \cdot \frac{\eps}{21 \cdot \# G \cdot (d_{A/J}+1)} + \frac{\eps}{7} +  \frac{4\eps}{21} 
<  \frac{3\eps}{7}  
\, .
\end{align}
Now, for all $a \in F$, for $g,h \in G$, for $k=0,1,\ldots,d_{\A/\J}$, and for $l=0,1,\ldots,d_{\J}$ we have, by commuting the term $(1-q)$ all the way to the right in the second step, and by commuting the term $q$ to the left three terms in the first summand and one term to the right in the second summand, and using the bounds from condition (\ref{2.10-quasicentral-q}) from the list of conditions $q$ was chosen to satisfy and from inequality (\ref{2.10-y,q}):
\begin{align*}
\label{2.10-E4}
\left ( \widetilde{x}_g^{(k)} z_h^{(l)} -  z_h^{(l)} \widetilde{x}_g^{(k)}
 \right )  a 
  {}&
   \\ {}& \kern-7em
 =
 \left (
\sqrt{x_g^{(k)}} (1-q) \sqrt{x_g^{(k)}}\cdot \sqrt{y_h^{(l)}} q \sqrt{y_h^{(l)}} -
 \sqrt{y_h^{(l)}} q \sqrt{y_h^{(l)}}\cdot \sqrt{x_g^{(k)}} (1-q) \sqrt{x_g^{(k)}}
 \right ) a 
 \\ {}& \kern-7em
\approx_{2\eps/21+2\delta} 
 \left (
\sqrt{x_g^{(k)}} \cdot \sqrt{x_g^{(k)}}\cdot \sqrt{y_h^{(l)}} q \sqrt{y_h^{(l)}} -
 \sqrt{y_h^{(l)}} q \sqrt{y_h^{(l)}}\cdot \sqrt{x_g^{(k)}} \cdot \sqrt{x_g^{(k)}}
 \right ) (1-q)a
\\ {}&  \kern-7em
\approx_{2\eps/21+2\delta} 
 \left (
(qx_g^{(k)}) y_h^{(l)} -
 y_h^{(l)} (q x_g^{(k)})
 \right ) (1-q)a
\end{align*}
and 
$$
 \left \|\left [qx_g^{(k)},y_h^{(l)} \right ]\right \| \cdot \|(1-q)a\|  < \delta
\, .
$$
Therefore, 
\begin{equation}
\label{2.10-E4}
\left \| \left [ \widetilde{x}_g^{(k)}, z_h^{(l)} \right ] a \right \| < \frac{4\eps}{21} + 5\delta < \eps \, .
\end{equation}
Finally, for any $a \in F$, 
\begin{align}
\label{2.10-E6}
 {}& \left \| \left (\sum_{k=0}^{d_{\A/\J}}\sum_{g \in G} \widetilde{x}_g^{(k)}  +
\sum_{k=0}^{d_{\J}}\sum_{g \in G} z_g^{(k)} \right) a - a\right \|   \\ \nonumber
{}& \quad \leq  
 \left \| \left (\sum_{k=0}^{d_{\A/\J}}\sum_{g \in G} \widetilde{x}_g^{(k)}\right )a
- a(1-q)\right \| +
\left \| \left ( \sum_{k=0}^{d_{\J}}\sum_{g \in G} z_g^{(k)} \right )  - q
\right \| 
+
\|qa-aq\| \\ \nonumber
{}& \quad
< \frac{2\eps}{7} + \frac{2\eps}{7} + \frac{\eps}{21\cdot\# G \cdot (d_{A/J}+1)} < \eps
\, .
\end{align}
For $g \in G$ and $k=0,1,\ldots,d_{\A/\J} + d_{\J} + 1$, set
$$
f_g^{(k)} = \left \{ 
\begin{matrix}
\widetilde{x}_g^{(k)} & \mid & k=0,1,\ldots,d_{\A/J} \\
z_g^{(k-d_{\A/\J} -1 )} & \mid & k=d_{\A/\J} + 1,d_{\A/\J} + 2,\ldots,d_{\A/\J} + d_{\J} + 1   
\, .
\end{matrix}
\right .
$$

One checks now that $\left (f_g^{(k)}\right)_{k=0,1,\ldots,d_{\A/\J}+d_{\J}+1}$ is a
$(d_{\A/\J}+d_{\J}+1,F,\eps)$-Rokhlin system, as follows.
\begin{itemize}
\item Condition (\ref{def-finite-group-ortho}) in Definition \ref{def: nonunital Rokhlin finite groups} follows from (\ref{2.10-E8}) and the fact that $z_g^{(k)}z_h^{(k)} = 0$ for $k=0,1,\ldots,d_J$ and all $g,h \in G$ with $g \neq h$.
\item Condition (\ref{def-finite-group-unit}) follows from (\ref{2.10-E6}).
\item Condition (\ref{def-finite-group-central}) follows from (\ref{2.10-x,a}) and  (\ref{2.10-E7}).
\item Condition (\ref{def-finite-group-permuted}) follows from (\ref{2.10-E5}) and (\ref{2.10-E1}).
\item Condition (\ref{commuting tower assumption}) follows from (\ref{2.10-E3}), (\ref{2.10-E2}), and (\ref{2.10-E4}).
\end{itemize}
This completes the proof that  $\dimrokct(\alpha) \leq
d_{\A/\J} + d_{\J} + 1$, as required.
\end{proof}

We conclude this section by applying the results above concerning equivariant extensions to the case of actions on type I $C^*$-algebras. We first need the following proposition.
\begin{Prop}
\label{prop:Hausdorff-spectrum}
Let $\A$ be a separable $C^*$-algebra with Hausdorff primitive ideal space $X$. Suppose $G$ is a finite group, and $\alpha \colon G \to \A$ is an action. Suppose $\alpha$ induces a free action on $X$, and suppose $X$ has covering dimension $d$. Then $\dimrokct(\alpha) \leq d$. 
\end{Prop}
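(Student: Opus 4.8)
The plan is to exploit the fact that a separable $C^*$-algebra with Hausdorff primitive ideal space $X = \mathrm{Prim}(\A)$ is a $C_0(X)$-algebra. By the Dauns--Hofmann theorem, $C_b(X) \cong Z(M(\A))$, so each $f \in C_0(X)$ acts as a central multiplier $f\cdot a$ of $\A$, and $\|b\| = \sup_{x \in X} \|b(x)\|$ for $b \in \A$, where $b(x)$ denotes the image of $b$ in the fibre $\A/P_x$. The action $\alpha$ extends to $M(\A)$ and restricts on $C_0(X) \subseteq Z(M(\A))$ to the action $\gamma$ induced by the given free $G$-action on $X$; thus $\alpha_g(f \cdot a) = \gamma_g(f)\cdot \alpha_g(a)$ for $f \in C_0(X)$ and $a \in \A$. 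Since $G$ is finite and acts freely on the Hausdorff space $X$, the quotient map $\pi \colon X \to X/G$ is a $(\# G)$-fold covering map onto a Hausdorff locally compact space, and, exactly as in the proof of Lemma \ref{Lemma:X-Rokhlin-implies-finite-Rokhlin-dim} (using \cite[Corollary 50.3]{Munkres-textbook}), every compact subset of $X/G$ has covering dimension at most $d$.

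First I would reduce the verification to producing, for each finite $F \subseteq \A$ (with $\|a\| \le 1$) and each $\eps > 0$, a family of commuting positive contractions $f_g^{(k)} \in C_0(X)$, for $k = 0,1,\ldots,d$ and $g \in G$, satisfying $f_g^{(k)} f_h^{(k)} = 0$ for $g \ne h$, the equivariance $\gamma_g(f_h^{(k)}) = f_{gh}^{(k)}$, and $\sum_{k,g} f_g^{(k)} = e$ for some $e \in C_0(X)$ with $0 \le e \le 1$ and $e = 1$ on a prescribed compact $G$-invariant set $K$. To build these I would mimic the construction in the proof of Lemma \ref{Lemma:X-Rokhlin-implies-finite-Rokhlin-dim}, now in the locally compact setting: using that the fibre-norm functions $x \mapsto \|a(x)\|$ lie in $C_0(X)$, choose $K$ to be a $G$-invariant compact set outside of which $\|a(x)\| < \eps$ for all $a \in F$; cover $\pi(K)$ by finitely many evenly covered open sets; pass to a $d$-decomposable finite refinement over a relatively compact neighborhood by \cite[Proposition 1.5]{kirchberg-winter}; take a subordinate partition of unity that equals $1$ on $\pi(K)$; lift each function to a single sheet of the covering; and symmetrize by setting $f_g^{(k)} = \gamma_g(f_1^{(k)})$. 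Freeness, together with the disjointness of the sheets over each evenly covered set, gives the within-tower orthogonality exactly as before.

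Next I would push these functions into $\A$. Writing $\bar f_g^{(k)}$ for the image of $f_g^{(k)}$ in $Z(M(\A))$, I would invoke Lemma \ref{Lemma:G-fixed-approximate-id} (with $\A$ regarded as an ideal in itself) to obtain a $G$-invariant quasicentral approximate identity, and choose from it a positive contraction $u \in \A^G$ with $\|ua - a\| < \eps$ and $\|[u,a]\| < \eps$ for all $a \in F$. Setting $F_g^{(k)} = \bar f_g^{(k)} u \in \A$, centrality of $\bar f_g^{(k)}$ makes these positive contractions in $\A$ for which conditions (\ref{def-finite-group-ortho}) and (\ref{commuting tower assumption}) of Definition \ref{def: nonunital Rokhlin finite groups} hold \emph{exactly} (both reduce to products of the commuting, tower-orthogonal $\bar f$'s), and condition (\ref{def-finite-group-permuted}) holds exactly because $u$ is $G$-fixed and $\alpha_g(\bar f_h^{(k)}) = \bar f_{gh}^{(k)}$. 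Condition (\ref{def-finite-group-central}) follows from $[F_g^{(k)}, a] = \bar f_g^{(k)}[u,a]$ and quasicentrality, while condition (\ref{def-finite-group-unit}) follows from $(\sum_{k,g} F_g^{(k)})a = \bar e u a$ together with $\|ua - a\| < \eps$ and $\|(1-e)a\| \le \eps$ (the latter from $e = 1$ on $K$ and the choice of $K$). This exhibits a $(d, F, \eps)$-Rokhlin system and yields $\dimrokct(\alpha) \le d$.

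The step I expect to be the main obstacle is the middle one: carrying out the covering-dimension and partition-of-unity construction of the equivariant functions $f_g^{(k)}$ in the non-compact setting while keeping the within-tower orthogonality and the exact equivariance $\gamma_g(f_h^{(k)}) = f_{gh}^{(k)}$, and arranging that their sum is a function equal to $1$ on the chosen compact set $K$. Everything else is bookkeeping with the $C_0(X)$-structure and the quasicentral approximate identity, where the centrality of the $\bar f_g^{(k)}$ makes most of the Rokhlin conditions hold on the nose rather than merely approximately.
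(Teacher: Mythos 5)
Your proposal is correct and follows essentially the same route as the paper: identify $C_0(X)$ with central multipliers via Dauns--Hofmann, run the covering-dimension/partition-of-unity construction of Lemma \ref{Lemma:X-Rokhlin-implies-finite-Rokhlin-dim} to get equivariant, tower-orthogonal functions, and multiply by a $G$-invariant (quasicentral) approximate unit from $\A$. The only difference is cosmetic: the paper exhausts $X$ by $G$-invariant compact subsets $X_N$, builds the functions on $X_N$ and extends them to $C_0(X)$ (after perturbing $F$ to be supported over $X_N$), whereas you build them directly in $C_0(X)$ around a compact invariant set $K$ --- which, if anything, sidesteps the small issue of preserving orthogonality and equivariance under the extension step.
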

\begin{proof}
By the Dauns-Hoffman theorem, we can identify $C_b(X)$ with the center of $M(\A)$. By slight abuse of notation, we denote by $\alpha$ the extension of the action of $\alpha$ to $C_b(X)$ and to $C_0(X)$. Let $X_1 \subseteq X_2 \subseteq \cdots \subseteq X$ be an increasing sequence of $G$-invariant compact subsets such that $\bigcup_n X_n = X$. Each $X_n$ has covering dimension at most $d$. We can view $G$ as acting on $C(X_n)$ as well, and by Lemma \ref{Lemma:X-Rokhlin-implies-finite-Rokhlin-dim}, those actions have Rokhlin dimension at most $d$ with commuting towers.

Let $F \subseteq \A$ be a finite subset, and let $\eps>0$. By perturbing $F$, we may assume without loss of generality that there is an $N$ such that $C_0(X \smallsetminus X_N)a = 0$ for all $a \in F$. 
Let $(f_g^{(l)})_{l=0,1,\ldots,d \, ; g \in G}$ be a family of Rokhlin elements in $C(X_N)$ as in Definition \ref{def: positive Rokhlin finite groups}. Extend each  of them to positive a contraction in $C_0(X)$. We retain the same notation for the extensions of those elements. Let $e \in A$ be a $G$-invariant positive contraction in $\A$ which satisfies 
$\|ea - a\| < \eps/2$ and $\|ae-a\|<\eps/2$ for all $a \in F$. One readily verifies that the family $(f_g^{(l)}e)_{l=0,1,\ldots,d \, ; g \in G}$ satisfies the conditions of Definition \ref{def: nonunital Rokhlin finite groups}.
\end{proof}

\begin{Cor}
\label{Cor:type-I}
Let $\A$ be a separable type I $C^*$-algebra with finite composition series $\A = I_n \rhd I_{n-1} \rhd \cdots \rhd I_1 \rhd I_0 = 0$, in which $I_{k+1}/I_{k}$ has Hausdorff primitive ideal space. Suppose the primitive ideal space of $I_{k}/I_{k-1}$ has finite covering dimension for $k=1,2,\ldots,n$. Let $G$ be a finite group, and let $\alpha \colon G \to \aut(\A)$ be an action. If $G$ acts freely on the primitive ideal space of $A$ then $\dimrokct(\alpha) <\infty$.
\end{Cor}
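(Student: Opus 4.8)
The plan is to reduce everything to Proposition \ref{prop:Hausdorff-spectrum} by manufacturing a \emph{finite $G$-invariant} chain of ideals whose subquotients have Hausdorff primitive ideal space of finite covering dimension, and then to stack these layers using the extension result, Theorem \ref{Thm:extensions-finite-group}. The given series need not be $G$-invariant, so the first task is to produce one.

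Write $X = \mathrm{Prim}(\A)$, and let $U_k \subseteq X$ be the open set corresponding to the ideal $I_k$, so that $\varnothing = U_0 \subseteq U_1 \subseteq \cdots \subseteq U_n = X$ and each locally closed set $Y_k = U_k \smallsetminus U_{k-1} = \mathrm{Prim}(I_k/I_{k-1})$ is, by hypothesis, Hausdorff of covering dimension at most some fixed $d$. For $g \in G$ let $\widehat{\alpha}_g$ denote the induced homeomorphism of $X$, and define $\rho_g \colon X \to \{1,\ldots,n\}$ by $\rho_g(x) = \min\{k : x \in \widehat{\alpha}_g(U_k)\}$. Assembling these gives a map $\rho = (\rho_g)_{g \in G} \colon X \to P := \{1,\ldots,n\}^G$ which is $G$-equivariant for the coordinate-permutation action $(h\cdot\ell)_g = \ell_{h^{-1}g}$ on $P$; in particular $G$ acts on $P$ by automorphisms of the product order. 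Each fiber $X_\ell = \rho^{-1}(\ell)$ is locally closed in $X$, and inspecting the coordinate $g = e$ shows $X_\ell \subseteq Y_{\ell_e}$, so $X_\ell$ is Hausdorff of covering dimension at most $d$.

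Next I would build a chain $\varnothing = S_0 \subsetneq S_1 \subsetneq \cdots \subsetneq S_m = P$ of $G$-invariant down-sets in which each difference $O_j = S_j \smallsetminus S_{j-1}$ is a single $G$-orbit, by repeatedly adjoining the orbit of a minimal element of the current complement; since $G$ acts by order automorphisms this orbit consists of minimal elements, and the enlarged set is again a down-set. Because $\rho_g^{-1}(\{k \leq m\}) = \widehat{\alpha}_g(U_m)$ is open, every down-set $S$ has $\rho^{-1}(S)$ open, so the $S_j$ determine a $G$-invariant chain of ideals $0 = \J_0 \lhd \J_1 \lhd \cdots \lhd \J_m = \A$ with $\mathrm{Prim}(\J_j/\J_{j-1}) = \rho^{-1}(O_j)$. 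The step I expect to be the crux is checking that each subquotient has \emph{Hausdorff} primitive ideal space, even though $\rho^{-1}(O_j)$ is a union of several fibers. This works because the elements of a single orbit all share the same multiset of coordinate values, so no two distinct elements of $O_j$ are comparable in the product order; consequently $X_\ell = \rho^{-1}(O_j) \cap \bigcap_{g} \widehat{\alpha}_g(U_{\ell_g})$ is open in $\rho^{-1}(O_j)$ for each $\ell \in O_j$, the finitely many fibers are therefore clopen in $\rho^{-1}(O_j)$, and $\rho^{-1}(O_j)$ is a disjoint union of clopen Hausdorff pieces, hence Hausdorff of covering dimension at most $d$.

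With this $G$-invariant series in hand the conclusion is immediate. The action of $G$ on $X$ is free, so its restriction to each invariant subspace $\rho^{-1}(O_j)$ is free, and $\J_j/\J_{j-1}$ is separable; by Proposition \ref{prop:Hausdorff-spectrum} the induced action of $G$ on each $\J_j/\J_{j-1}$ has finite Rokhlin dimension with commuting towers. Finally I would induct on $j$: the action on $\J_1 = \J_1/\J_0$ has finite Rokhlin dimension, and the inductive step applies Theorem \ref{Thm:extensions-finite-group} to the invariant ideal $\J_{j-1} \lhd \J_j$, whose quotient $\J_j/\J_{j-1}$ already has finite Rokhlin dimension, to conclude the same for $\J_j$. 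Taking $j = m$ yields $\dimrokct(\alpha) < \infty$, as required.
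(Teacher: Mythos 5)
Your proof is correct, and its overall skeleton is exactly the paper's: produce a $G$-invariant composition series whose subquotients have Hausdorff primitive ideal space of finite covering dimension, apply Proposition \ref{prop:Hausdorff-spectrum} to each layer, and stack the layers by repeated use of Theorem \ref{Thm:extensions-finite-group}. The one place you genuinely diverge is the first step: the paper simply cites the proof of \cite[Corollary 8.1.2]{Ph1} for the existence of the invariant series, whereas you construct it explicitly via the equivariant map $\rho \colon \mathrm{Prim}(\A) \to \{1,\ldots,n\}^G$ and a chain of $G$-invariant down-sets obtained by adjoining orbits of minimal elements. Your construction checks out: the equivariance of $\rho$, the openness of preimages of down-sets, the fact that an orbit of a minimal element consists of pairwise incomparable minimal elements (so the fibers over a single orbit are clopen in the corresponding subquotient's spectrum, which is therefore Hausdorff), and the freeness and separability needed for Proposition \ref{prop:Hausdorff-spectrum} are all verified correctly. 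What your route buys is a self-contained argument in place of an external citation; what it costs is length. The only point you pass over silently is that covering dimension is monotone when passing from $Y_{\ell_e} = \mathrm{Prim}(I_{\ell_e}/I_{\ell_e -1})$ to the (not necessarily closed) subspace $X_\ell$; this is harmless here because $\A$ is separable, so each $Y_k$ is second countable, locally compact, and Hausdorff, hence separable metrizable, and the subspace theorem for covering dimension applies. A sentence to that effect would make the argument airtight.
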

\begin{proof}
The proof of \cite[Corollary 8.1.2]{Ph1} shows that the composition series above can be chosen to be $G$-invariant. By Proposition \ref{prop:Hausdorff-spectrum}, the induced actions of $G$ on the algebras $I_{k+1}/I_k$ have finite Rokhlin dimension with commuting towers. The statement now follows by repeated application of Theorem \ref{Thm:extensions-finite-group}.
\end{proof}

Corollary \ref{Cor:type-I} is a partial converse to the following.
\begin{Prop}
Let $\A$ be a separable type I $C^*$-algebra. Let $G$ be a finite group, and let $\alpha\colon G \to \aut(\A)$ be an action. If $\dimrokct(\alpha)<\infty$ then the induced action of $\alpha$ on the primitive ideal space of $\A$ is free. 
\end{Prop}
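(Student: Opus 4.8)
The plan is to argue by contradiction and reduce the (global) freeness statement to the failure of pointwise outerness at a single fixed point, contradicting Lemma \ref{Lemma:Rokhlin-pointwise-outer}. Suppose the induced action on $\mathrm{Prim}(\A)$ is not free, so there are a nontrivial $g \in G$ and a primitive ideal $P$ with $\alpha_g(P) = P$. Let $H = \langle g \rangle$; every element of $H$ fixes $P$, so $P$ is an $H$-invariant ideal, and by Lemma \ref{Lemma:finite-subgroup} the restriction $\alpha|_H$ still has finite Rokhlin dimension with commuting towers. Passing to the quotient $\A/P$, Proposition \ref{Prop:ideal-and-quotient-finite-group}(2) shows that the induced action $\overline{\alpha}$ of $H$ on $\A/P$ again has finite Rokhlin dimension.

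Next I would invoke the type I hypothesis. Fix an irreducible representation $\pi \colon \A \to B(\mathcal H_\pi)$ with $\ker \pi = P$, so that $\A/P \cong \pi(\A)$, and recall that since $\A$ is type I we have $\K(\mathcal H_\pi) \subseteq \pi(\A)$. Because $\alpha_g(P) = P$, the representations $\pi$ and $\pi \circ \alpha_g$ have the same kernel; as $\A$ is type I, irreducible representations with equal kernels are unitarily equivalent, so there is a unitary $u \in B(\mathcal H_\pi)$ with $\pi(\alpha_g(a)) = u \pi(a) u^*$ for all $a \in \A$. Under the identification $\A/P \cong \pi(\A)$ this says precisely that $\overline{\alpha}_g = \mathrm{Ad}(u)$ on $\pi(\A)$.

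It remains to derive the contradiction. Since $\overline{\alpha}_{g^k} = \mathrm{Ad}(u^k)$ for every $k$ and $u \K(\mathcal H_\pi) u^* = \K(\mathcal H_\pi)$, the ideal $\K(\mathcal H_\pi) \lhd \pi(\A)$ is $H$-invariant. Applying Proposition \ref{Prop:ideal-and-quotient-finite-group}(2) once more, now to this ideal, the restriction of $\overline{\alpha}$ to $\K(\mathcal H_\pi)$ has finite Rokhlin dimension, so by Lemma \ref{Lemma:Rokhlin-pointwise-outer} it is pointwise outer. But $\overline{\alpha}_g|_{\K(\mathcal H_\pi)} = \mathrm{Ad}(u)|_{\K(\mathcal H_\pi)}$ with $u \in B(\mathcal H_\pi) = M(\K(\mathcal H_\pi))$, so this automorphism is inner; as $g$ is nontrivial, this contradicts pointwise outerness and completes the proof.

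The permanence bookkeeping is entirely supplied by the cited results, so the substance of the argument lies in the type I structure theory. The step I expect to require the most care is ensuring that the unitary implementing $\overline{\alpha}_g$ lives in the multiplier algebra of an \emph{invariant} ideal, so that the resulting automorphism genuinely counts as inner in the sense ruled out by Lemma \ref{Lemma:Rokhlin-pointwise-outer}. Passing from $\A/P$ down to the minimal ideal $\K(\mathcal H_\pi)$, whose multiplier algebra is all of $B(\mathcal H_\pi)$, is exactly what turns the abstract unitary equivalence of $\pi$ and $\pi \circ \alpha_g$ into an honest inner automorphism.
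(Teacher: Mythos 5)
Your proof is correct and follows essentially the same strategy as the paper: pass to a cyclic subgroup via Lemma \ref{Lemma:finite-subgroup}, locate an invariant subquotient isomorphic to the compact operators on which the nontrivial group element acts innerly, and contradict Lemma \ref{Lemma:Rokhlin-pointwise-outer} using Proposition \ref{Prop:ideal-and-quotient-finite-group}. The only difference is that the paper imports the existence of such a subquotient from \cite[Lemma 8.2.1]{Ph1}, whereas you construct it directly --- quotienting by the fixed primitive ideal $P$, using that $\pi(\A) \supseteq \K(\mathcal{H}_{\pi})$ for type I algebras and that irreducible representations of a separable type I algebra with equal kernels are unitarily equivalent --- and both of those structural inputs are standard and correctly applied.
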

\begin{proof}
Suppose not. Then there is a finite cyclic subgroup $H$ of $G$ of prime order such that the restriction of the action to $H$ is not free either. By Lemma \ref{Lemma:finite-subgroup}, we have $\dimrokct(\alpha|_H)<\infty$ as well. By \cite[Lemma 8.2.1]{Ph1}, there exist $H$-invariant ideals $I \rhd J$ such that $I/J$ is isomorphic to the $C^*$-algebra of compact operators on some Hilbert space. Thus, $H$ acts on $I/J$ via inner automorphisms, but also has finite Rokhlin dimension. By Lemma \ref{Lemma:Rokhlin-pointwise-outer}, this cannot happen.
\end{proof}

\section{Actions of a single automorphism: permanence properties}
\label{sec:single-auto-permanence-properties}

In this section we discuss the analogs of the results from Section
\ref{sec:finite-group-permanence-properties} for actions of $\Z$. We follow the
same organization as the previous section, and begin by extending the permanence
properties from \cite{HWZ} to the nonunital setting. We state these as
Theorems \ref{Thm:permanence-dimension-single-auto} and
\ref{Thm:permanence-Z-single-auto}.

\begin{Thm}
\label{Thm:permanence-dimension-single-auto}
Let $\A$ be a  $C^*$-algebra with finite nuclear dimension and $\alpha \in
\aut(\A)$
be an automorphism with $\dimrokct(\alpha)=d$. Then the crossed product $\A
\rtimes_{\alpha} \Z$ has finite 
nuclear dimension as well --- in fact, 
$$
\dimnuc (\A \rtimes_{\alpha} \Z) \leq 4(\dimnuc(\A) +1) (d+1)-1  \; .
$$ 
\end{Thm}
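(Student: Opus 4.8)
The plan is to follow the strategy used for \cite[Theorem~1.3]{HWZ} (the finite-group case, generalized in Theorem \ref{Thm:permanence-dimension-finite-groups} above), adapted to the double-tower structure that a single automorphism forces. The first step is to pass to the central sequence reformulation of Lemma \ref{Lemma:central-sequence-reformulation-single-auto}: for a given finite set and tolerance I fix a large tower height $p$ and obtain positive contractions $f_{r,j}^{(l)} \in \A_{\infty} \cap \A'$ (for $l=0,\ldots,d$, $r=0,1$, and $j$ ranging over the appropriate tower) that are orthogonal within a tower, sum to a unit modulo $\mathrm{Ann}(\A)$, and are shifted by $\overline{\alpha}$ up each tower with the prescribed wrap-around. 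Simultaneously, since $\dimnuc(\A)=:m<\infty$, I fix completely positive contractive approximations $\psi_{\A}\colon \A \to F_{\A}$ and $\varphi_{\A}\colon F_{\A}\to \A$ through a finite-dimensional algebra $F_{\A}$, approximately factoring the identity on the relevant finite set, with $\varphi_{\A}$ written as a sum $\varphi_{\A}^{(0)}+\cdots+\varphi_{\A}^{(m)}$ of completely positive contractive order zero maps.

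The heart of the argument is to turn each single tower into a finite-dimensional ``matrix-over-$\A$'' picture of the crossed product. Writing $u$ for the canonical implementing unitary, a tower $f_{r,0}^{(l)},\ldots,f_{r,q-1}^{(l)}$ of height $q$ (with $\overline{\alpha}(f_{r,j}^{(l)})\approx f_{r,j+1}^{(l)}$ cyclically) provides approximate matrix units: the diagonal is supplied by the $f_{r,j}^{(l)}$ and the off-diagonal shifts by powers of $u$, so that elements $au^k$ of $\A \rtimes_{\alpha}\Z$ are approximately supported, through this tower, in $M_q \otimes (\text{a corner of }\A)$. Composing with $\psi_{\A}$ and $\varphi_{\A}$ on the $\A$-coordinate and with the tower data on the $M_q$-coordinate produces completely positive maps $\A \rtimes_{\alpha}\Z \to M_q \otimes F_{\A}$ and back, one pair for each color $l$ and each tower $r$, which approximately recover the identity on the chosen finite subset of the crossed product.

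The counting that yields the constant $4(\dimnuc(\A)+1)(d+1)$ comes from decomposing each lift into order zero pieces. There are $d+1$ colors and, for each color, two towers (the double-tower structure for $r=0,1$), giving the factor $2(d+1)$. For a \emph{single} tower the cyclic shift implemented by $u$ wraps the top level back to the bottom, which obstructs writing the lift as one order zero map; I resolve this exactly as for the circle (recall $\dimnuc(C(\T))=1$) by splitting the index set $\{0,\ldots,q-1\}$ into two contiguous arcs, on each of which the shift does not wrap, so that the restriction to each arc genuinely \emph{is} completely positive order zero. This doubles the count to $4(d+1)$, and tensoring with the order zero decomposition $\varphi_{\A}^{(0)},\ldots,\varphi_{\A}^{(m)}$ multiplies by the remaining $m+1=\dimnuc(\A)+1$. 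Assembling the pieces over all colors, towers, arcs, and the nuclear dimension decomposition of $\A$ exhibits an approximate factorization of the identity on $\A \rtimes_{\alpha}\Z$ through a finite-dimensional algebra with at most $4(\dimnuc(\A)+1)(d+1)$ completely positive contractive order zero summands, which is the stated bound.

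The step I expect to be the main obstacle is making the matrix-over-$\A$ picture genuinely compatible with the order zero decomposition while controlling all errors at once. The approximate matrix units built from $f_{r,j}^{(l)}$ and $u$ are only \emph{approximately} multiplicative and only approximately commute with $\A$; keeping the order zero property exact on each arc, rather than merely approximate, requires the projectivity and lifting maneuvers of Remark \ref{Rmk:single-auto-ortho} to replace the approximate relations by exact ones in the central sequence algebra, followed by careful bookkeeping of how the seam between the two arcs, the wrap between the two towers of a double tower, and the $\mathrm{Ann}(\A)$-defect in condition (\ref{Lemma:central-sequence-reformulation-single-auto:orthogonality}) interact. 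This is precisely where the analysis is more delicate than in the finite-group case of Theorem \ref{Thm:permanence-dimension-finite-groups}, and it accounts for the extra factor of four in the bound.
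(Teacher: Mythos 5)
Your proposal follows essentially the same route as the paper, which proves this theorem by observing that it is the straightforward nonunital generalization of \cite[Theorem 4.1]{HWZ}: the Rokhlin double towers give an approximate matrix-unit picture of the crossed product, the wrap-around is absorbed by splitting each tower into two arcs (whence the factor $4 = 2\cdot 2$ from two towers and two arcs), and tensoring with the order zero decomposition of a completely positive contractive approximation of $\A$ yields the stated bound, with the commuting-tower condition indeed playing no role. One small correction to your setup: the wrap-around in Definition \ref{nonunital Rokhlin-dim-single-auto} couples the two towers (only $\alpha\big(f_{0,p-1}^{(l)} + f_{1,p}^{(l)}\big) \approx f_{0,0}^{(l)} + f_{1,0}^{(l)}$ holds, not a per-tower cyclic shift), but this does not affect your counting or the structure of the argument.
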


\begin{Thm}
\label{Thm:permanence-Z-single-auto}
Let $\A$ be a separable $\Zh$-absorbing $C^*$-algebra and $\alpha \in \aut(\A)$
be an automorphism with $\dimrokct(\alpha)<\infty$. Then  $\A \rtimes_{\alpha}
\Z$ is $\Zh$-absorbing as well.
\end{Thm}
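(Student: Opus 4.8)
The plan is to verify the hypotheses of Corollary \ref{Cor:enough-to-embed-several-order-zero-pairs} for the action of $\Z$ generated by $\alpha$; since that corollary is stated for an arbitrary discrete countable group, it then yields $\Zh$-absorption of $\A \rtimes_{\alpha} \Z$ at once. Thus it suffices to show: for every $n$, every $\eps>0$, and every finite set $G_0 = \{-N, \ldots, N\} \subseteq \Z$, there are contractive order zero maps $\varphi_j^{(k)} \colon M_{n+j} \to \A_{\infty} \cap \A'$, for $j = 0,1$ and $k = 0,1,\ldots,d$ with $d = \dimrokct(\alpha)$, with commuting images, satisfying $\|\overline{\alpha}^m(\varphi_j^{(k)}(x)) - \varphi_j^{(k)}(x)\| < \eps\|x\|$ for all $|m| \leq N$, and such that $f = \sum_{k=0}^{d}[\varphi_0^{(k)}(1) + \varphi_1^{(k)}(1)]$ satisfies $f \leq 1$ and $af = a$ for all $a \in \A$.

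I would build these maps in close parallel with the proof of Theorem \ref{Thm:permanence-Z-finite-groups}. First, using that $\A$ is $\Zh$-absorbing, choose by \cite[Proposition 4.1]{HRW} completely positive contractions $\iota_0, \ldots, \iota_d \colon \Zh \to \A_{\infty} \cap \A'$ which are approximately multiplicative and satisfy $a\iota_k(1) = a$ for all $a \in \A$, and which, after the same inductive lifting and diagonalization used there, enjoy the cross commutation $[\overline{\alpha}^m(\iota_k(x)), \iota_l(y)] = 0$ for $k < l$ needed to force commuting images. Fix order zero maps $\theta_0 \colon M_n \to \Zh$ and $\theta_1 \colon M_{n+1} \to \Zh$ with commuting ranges and $\theta_0(1) + \theta_1(1) = 1$. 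Finally, using Lemma \ref{Lemma:central-sequence-reformulation-single-auto} together with Remark \ref{Rmk:central-sequence-reformulation-single-auto-Moreover}, pick a double tower Rokhlin system $(f_{r,s}^{(k)})$ of large height $p$ commuting with all of the (separably many) elements $\overline{\alpha}^s(\iota_k(\Zh))$. The natural candidate, imitating the group averaging $\sum_g f_g^{(k)} \overline{\alpha}_g(\iota_k \theta_j(x))$ of the finite group case, is to spread $\iota_k \theta_j(x)$ along the tower by the shift, setting $\varphi_j^{(k)}(x) = \sum_{r,s} f_{r,s}^{(k)} \overline{\alpha}^s(\iota_k \theta_j(x))$. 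The tower partition of unity then gives $af = a$ and $f \leq 1$ exactly as in Theorem \ref{Thm:permanence-Z-finite-groups}, and commuting images follow from the arrangement above.

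The main obstacle is \emph{equivariance}, and this is where the single automorphism case departs essentially from the finite group case. Applying $\overline{\alpha}^m$ to $\varphi_j^{(k)}(x)$ shifts the tower levels, and in the interior of each tower the relation $\overline{\alpha}(f_{r,s}^{(k)})a = f_{r,s+1}^{(k)}a$ makes the sum telescope; but at the top the wraparound folds levels $p-1$ and $p$ back to the bottom, producing a defect of the form $f_{\mathrm{top}}^{(k)}(\overline{\alpha}^{p}(\iota_k \theta_j(x)) - \iota_k \theta_j(x))$, together with the analogous $p+1$ term. Because $\Z$ is infinite, the index set $\{0,\ldots,p-1\}$ is not closed under the shift, and this boundary term does \emph{not} vanish or shrink as $p \to \infty$ under the naive spreading, whereas in the finite group case there is no boundary and invariance is exact. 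To overcome this I would interpolate across a buffer consisting of the top $L$ levels of the towers, with $N \ll L \ll p$: since $\iota_k$ and $\overline{\alpha}^{p} \circ \iota_k$ are two approximately unital embeddings of the strongly self absorbing algebra $\Zh$ into $\A_{\infty} \cap \A'$, they are approximately unitarily equivalent through a unitary in the connected component of the identity, and a path realizing this equivalence lets one connect the element placed at the top of a tower to the one at its bottom gradually, so that each application of $\overline{\alpha}^m$ with $|m| \leq N$ alters the map by only $O(N/L)$. Carrying this interpolation through while preserving the order zero property, the commuting images, and the sum to unit estimate is the technical heart of the argument; the passage to the nonunital setting, namely cutting down by $\alpha$-invariant approximate units as in Lemma \ref{Lemma:G-fixed-approximate-id} and Proposition \ref{Prop:ideal-and-quotient-finite-group}, is routine. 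With the maps $\varphi_j^{(k)}$ in hand, Corollary \ref{Cor:enough-to-embed-several-order-zero-pairs} completes the proof.
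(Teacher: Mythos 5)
Your proposal follows essentially the same route as the paper: verify the hypotheses of Corollary \ref{Cor:enough-to-embed-several-order-zero-pairs}, spread embeddings $\iota_k\colon\Zh\to\A_{\infty}\cap\A'$ along Rokhlin double towers chosen to commute with all their $\overline{\alpha}$-iterates, and correctly identify the wraparound defect at the top of the tower as the essential new difficulty, to be cured by a path of unitaries interpolating between the identity and a unitary conjugating the shifted embedding back to the original one. The one step you defer as ``the technical heart'' --- producing such a path with controlled length inside $\A_{\infty}\cap\A'$ --- is exactly where the paper inserts its specific device: it takes a \emph{second} family $\mu_0,\ldots,\mu_d\colon\Zh\to\A_{\infty}\cap\A'$ whose images commute with all iterates of the images of the $\iota_k$, combines them into maps $\rho_k,\rho_k'$ on $\Zh\otimes\Zh$, and uses the approximate innerness of the tensor flip together with connectedness of $U(\Zh\otimes\Zh)$ to get a rectifiable path $w_0=1,\ldots,w_n=w$; the elements $x_j^{(k)}=\rho_k(w_j)^*\rho_k'(w_j)$ are then unitaries only modulo the annihilator of $\A$, which is all that is needed. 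Note also that the paper spreads the interpolation over the entire tower (the tower height $n$ is chosen so that $L/n$ is small, making consecutive conjugators differ by $O(L/n)$) rather than confining it to a buffer of the top levels, but your buffer variant would work equally well. So the proposal is correct in outline and matches the paper's strategy; the missing implementation of the unitary path is supplied by the $\mu_k$/flip construction rather than by a direct appeal to approximate unitary equivalence of embeddings of $\Zh$.
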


Theorem \ref{Thm:permanence-dimension-single-auto} is a generalization of \cite[Theorem 4.1]{HWZ} to the nonunital setting. The modification required to obtain this generalization is straightforward and will be omitted.
For Theorem
\ref{Thm:permanence-dimension-single-auto}, it is not necessary to assume that
the different Rokhlin towers approximately commute. 

For the proof of Theorem \ref{Thm:permanence-Z-single-auto} we need
modifications similar to those we used in the case of finite group
actions. We skip details in those parts of the proof which closely mirror those which appear in the proof of Theorem \ref{Thm:permanence-Z-finite-groups}.

\begin{proof}[Proof of Theorem \ref{Thm:permanence-Z-single-auto}]
 We show that the conditions of Corollary
\ref{Cor:enough-to-embed-several-order-zero-pairs} hold.

Let $r$ be a given positive integer. Fix two order zero maps $\theta_j\colon M_{r+j}
\to \Zh$, for $j=0,1$, with commuting ranges, such that 
$\theta_0(1)+\theta_1(1) = 1$. Let $K$ be the union of the images of the unit
balls of $M_r$ and $M_{r+1}$ under these maps.
We claim that there are completely positive contractions $\iota_0,\iota_1,\ldots,\iota_d,\mu_0,\mu_1,\ldots,\mu_d\colon \Zh \to
\A_{\infty} \cap \A '$ satisfying: 
\begin{enumerate}
\item $a\iota_k(1) = a$ and  $a\mu_k(1) = a$ for all $a \in \A$.
\item $a(\iota_k(xy) - \iota_k(x)\iota_k(y)) = 0$ and $a(\mu_k(xy) -
\mu_k(x)\mu_k(y)) = 0$ for all $a \in \A$ and all $x,y \in \Zh$.
\item The image of each of those maps commutes with all the iterates of the image of any of the other maps under $\overline{\alpha}$. 
\end{enumerate}
The proof of this claim is very similar to the analogous one in the proof of Theorem \ref{Thm:permanence-Z-finite-groups}, and we omit it. Notice that if $\iota \colon \Zh \to \A_{\infty} \cap \A'$ is a homomorphism, then $\bigcup_{n \in \Z} \overline{\alpha}^n(\Zh)$ is a separable set, and therefore replacing the finite group $G$ from Theorem \ref{Thm:permanence-Z-finite-groups} by the group of integers causes no difficulties.

Since the tensor flip in $\Zh \otimes \Zh$ is approximately inner, there is $w$ in the unitary group 
$U(\Zh \otimes \Zh)$ such that
$$\|w(x\otimes 1)w^*-1 \otimes x\| < \frac{\eps}{4}$$ for all $x \in K$. The unitary group of the Jiang-Su algebra is connected. 
Thus, $w$ can be connected to $1$ via a rectifiable path.
Let $L$ be the length of such a path. Choose $n\in \N$ such that
$L\|x\|/n <\eps/8$ for all $x \in F$.

Recall (\cite[Exercise 3.5.1]{Brown-Ozawa}) that if $B_1$, $B_2$, and $D$ are $C^*$-algebras, and
$T_1\colon B_1 \to D$ and $T_2\colon B_2 \to D$ are completely positive contractions with commuting ranges, then there exists a completely positive contraction $T \colon B_1 \otimes_{\max} B_2 \to D$ such that $T(b_1 \otimes b_2) = T_1(b_1)T_2(b_2)$ for all $b_1 \in B_1$ and $b_2 \in B_2$. Therefore, the commutation properties of the maps $\iota_k$ and $\mu_k$ for $k=0,1,\ldots,d$ imply that there are
 completely positive contractions 
$$
\rho_k,\rho_k'\colon \Zh \otimes \Zh \to
\A_{\infty} \cap \A' 
$$ 
such that for all $x,y\in \Zh$,
$$
\rho_k(x \otimes y) = \iota_k (x) \mu_k(y)\quad \mathrm{and} \quad \rho_k'(x\otimes y) =
\overline{\alpha}^n(\iota_k(x))\mu_k(y) 
\, .
$$ 
 We claim that
$\rho(1)a = a$ and $(\rho(xy) - \rho(x)\rho(y))a =0$ for all $a \in \A$ and $x,y
\in \Zh \otimes \Zh$, and the same  for $\rho'$. This evidently
holds for elementary tensors in $\Zh \otimes \Zh$, and by linearity and
continuity holds for all $x,y \in \Zh \otimes \Zh$, proving the claim.

Pick unitaries $1=w_0,w_1,\ldots,w_n = w$ in the identity component 
$U_0(\Zh \otimes \Zh)$ of $U(\Zh \otimes \Zh)$ such that $\|w_j-w_{j+1}\| \leq L/n$ for
$j=0,1,\ldots,n-1$. Now, for $k=0,1,\ldots,d$, let 
$$
x_j^{(k)} = \rho_k(w_j)^*\rho_k'(w_j)
\, .
$$ 
The elements $x_j^{(k)}$ behave like unitaries when multiplied by elements from $\A$, that is, we
have
$$
x_j^{(k)}\left (x_j^{(k)} \right )^*a =\left (x_j^{(k)} \right )^*x_j^{(k)}a = a
$$
for all $a \in \A$. Furthermore, $x_0^{(k)}a = a$ for all $a \in \A$ and $k=0,1,\ldots,d$. 
Note also that
$$
\left \|x_j^{(k)} - x_{j+1}^{(k)} \right \| \leq \frac{2L}{n}
$$ 
for $j=0,1,\ldots,n-1$, and  
that 
\begin{align*}
{}& 
\left \|\left ( x_n^{(k)} \overline{\alpha}^n(\iota_k(y)) \left (x_n^{(k)} \right )^* -
\iota_k(y) \right ) a \right \| 
 \\ {}& \quad =
\left \|\left (\rho_k(w)^*\rho_k'(w) \overline{\alpha}^n(\iota_k(y)) \rho_k'(w)^*\rho_k(w) -
\iota_k(y) \right ) a \right \| 
\\ {}& \quad < 
\left \|\left (\rho_k(w)^* \mu_k(y)\rho_k(w) -
\iota_k(y) \right ) a \right \| + \frac{\eps}{4}
<
\frac{\eps}{2}
\end{align*}
for all $y\in K$ and all $a \in \A$ with norm at most 1.

Likewise, pick unitaries $1=w'_0,w'_1,\ldots,w'_{n+1} = w' \in
U_0(\Zh \otimes \Zh)$ such that $\|w_j-w_{j+1}\| \leq L/(n+1)$ for
$j=0,1,\ldots,n$, and for $k=0,1,\ldots,d$, let 
$$
y_j^{(k)} = \rho_k(w_j')^*\rho_k'(w_j')
\, .
$$
The elements $y_j^{(k)}$ satisfy the analogous properties to those of the elements $x_j^{(k)}$, with $n+1$ in place of $n$.

Let $f_{0,0}^{(l)},\ldots,f_{0,n-1}^{(l)}, f_{1,0}^{(l)},\ldots,f_{1,n}^{(l)}
 \in \A_{\infty} \cap \A '$ , for $l = 0,1,\ldots,d$, be
commuting Rokhlin elements in $\A_{\infty} \cap \A '$  as  in Lemma
\ref{Lemma:central-sequence-reformulation-single-auto} which are furthermore
chosen to commute with $\overline{\alpha}^j(\iota_k(\Zh))$ and
$\overline{\alpha}^j(\mu_k(\Zh))$ for $k=0,1,\ldots,d$ and for all $j\in \Z$ (see Remark \ref{Rmk:central-sequence-reformulation-single-auto-Moreover}).

Now, for $i=0,1$, set
\begin{align*}
{}& \theta_i^{(k)} (x)= 
\sum_{j=0}^{n-1}f_{0,j}^{(k)}\overline{\alpha}^{j-n}(x_j^{(k)})\overline{\alpha}^j
(\iota_k \circ \theta_i(x))\overline{\alpha}^{j-n}\left (\left (x_j^{(k)} \right )^* \right ) 
\\ {}&  \quad \quad \quad \quad
+ 
\sum_{j=0}^{n}f_{1,j}^{(k)}\overline{\alpha}^{j-n}(y_j^{(k)})\overline{\alpha}^j
(\iota_k \circ \theta_i(x))\overline{\alpha}^{j-n}\left (\left (y_j^{(k)} \right )^* \right )
\, .
\end{align*}

We can check that 
$$
\left \|\left ( \overline{\alpha}(\theta_i^{(k)}(x)) - \theta_i^{(k)}(x) \right )a \right \|<\eps
$$
for all $a$ in the unit ball of $\A$ and for all $x$ in the unit balls of $M_r$, $M_{r+1}$, respectively.
Furthermore, for all $a \in \A$,
$$
\left (\theta_0^{(k)}(1) + \theta_1^{(k)}(1) \right )a = \left (
\sum_{j=0}^{n-1}f_{0,j}^{(k)} + \sum_{j=0}^{n}f_{1,j}^{(k)} \right ) a
$$ 
and thus, if we denote 
$$
f = \sum_{k=0}^d\sum_{i=0}^1\theta_i^{(k)}
$$ 
we get that $fa = a$ for all $a \in \A$. 
Therefore those maps satisfy the conditions of Corollary
\ref{Cor:enough-to-embed-several-order-zero-pairs}.
\end{proof}

We now consider the analogs of Proposition
\ref{Prop:ideal-and-quotient-finite-group} and Theorem
\ref{Thm:extensions-finite-group} concerning equivariant extensions for the case
of a single automorphism. The idea of the proof is similar. The main difference
is that we cannot expect to have quasicentral approximate identities that are
fixed under the automorphism. However we can have ones that are approximately
fixed. Although the proofs are otherwise quite similar, we provide most of the
details for the reader's convenience.

\begin{Lemma}
\label{Lemma:almost-fixed-approximate-id}
Let $\alpha \colon G \to \aut(A)$ be an action of a discrete amenable group $G$ on a $C^*$-algebra $\A$. Let $\J \lhd \A$ be an invariant ideal. For any finite set of elements $G_0 \subseteq G$ and any $\eps>0$ there exists a quasicentral approximate identity $(e_{\lambda})_{\lambda \in \Lambda}$ for $\J$ in $\A$ such that $\|\alpha_g(e_{\lambda}) - e_{\lambda}\|<\eps$ for all $g \in G_0$ and all $\lambda \in \Lambda$.
\end{Lemma}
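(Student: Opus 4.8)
The plan is to exploit amenability of $G$ through Følner's condition, and then to obtain the desired approximate identity by averaging an ordinary quasicentral approximate identity over a sufficiently invariant finite subset of $G$. The whole argument is an averaging trick, entirely parallel to the compact-group case in Lemma \ref{Lemma:G-fixed-approximate-id}, with Haar integration over $G$ replaced by the normalized counting average over a Følner set.

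First I would recall that, since $G$ is discrete and amenable, Følner's condition holds: for the given finite set $G_0 \subseteq G$ and $\eps>0$ there is a nonempty finite set $S \subseteq G$ with
\[
\frac{\#(gS \triangle S)}{\# S} < \eps \qquad \text{for all } g \in G_0.
\]
Next I would choose an ordinary quasicentral approximate identity $(u_\lambda)_{\lambda \in \Lambda}$ of positive contractions for $\J$ in $\A$ (such exist for any closed ideal in any $C^*$-algebra). Since $\J$ is invariant, each $\alpha_s$ restricts to an automorphism of $\J$, so $\alpha_s(u_\lambda) \in \J$, and I set
\[
e_\lambda = \frac{1}{\# S} \sum_{s \in S} \alpha_s(u_\lambda) \in \J .
\]

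Then I would verify the required properties, all of which are routine precisely because $S$ is a \emph{fixed finite} set. Each $e_\lambda$ is a positive contraction, being an average of positive contractions, and if $(u_\lambda)$ is increasing then so is $(e_\lambda)$, since each $\alpha_s$ is order preserving. For the approximate-identity property, fix $a \in \J$; for each $s \in S$ one has $\|\alpha_s(u_\lambda) a - a\| = \|u_\lambda \alpha_{s^{-1}}(a) - \alpha_{s^{-1}}(a)\| \to 0$ because $\alpha_{s^{-1}}(a) \in \J$, and a finite average of nets converging to $0$ again converges to $0$. Quasicentrality is identical: for $a \in \A$, $\|[\alpha_s(u_\lambda), a]\| = \|[u_\lambda, \alpha_{s^{-1}}(a)]\| \to 0$, averaged over the finite set $S$.

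Finally, the approximate invariance is where amenability enters. For $g \in G_0$, reindexing by $t = gs$ gives
\[
\alpha_g(e_\lambda) - e_\lambda = \frac{1}{\# S}\left( \sum_{t \in gS \setminus S} \alpha_t(u_\lambda) - \sum_{t \in S \setminus gS} \alpha_t(u_\lambda) \right),
\]
so, bounding each sum by its number of (contractive, positive) terms,
\[
\|\alpha_g(e_\lambda) - e_\lambda\| \le \frac{\#(gS \setminus S) + \#(S \setminus gS)}{\# S} = \frac{\#(gS \triangle S)}{\# S} < \eps,
\]
uniformly in $\lambda$. I do not expect any genuine obstacle: the entire content is the Følner estimate, and the finiteness of $S$ automatically preserves the approximate-identity and quasicentrality properties of the original net. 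The only point needing a little care is confirming that averaging does not destroy these two properties, which works exactly because $S$ is finite.
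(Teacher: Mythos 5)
Your proposal is correct and is exactly the paper's argument: the paper's proof is the one-line instruction to take a quasicentral approximate identity and average it over a sufficiently invariant F{\o}lner set, and you have simply supplied the routine verifications (positivity, the approximate-identity and quasicentrality properties surviving a finite average, and the F{\o}lner estimate for approximate invariance). No gaps.
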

\begin{proof}
Choose a quasicentral approximate identity for $\J$ in $\A$ and average it over a sufficiently large F{\o}lner set.
\end{proof}

\begin{Prop}
\label{Prop:ideal-and-quotient-single-auto}
Let $\alpha\in\aut(\A)$ be an automorphism with $\dimrokct(\alpha)=d$. 
\begin{enumerate}
\item Suppose $\B \subseteq \A$ is a $\alpha$-invariant hereditary subalgebra. Let $\beta$ be the restriction of $\alpha$ to $\B$. Then $\dimrokct(\beta) \leq \dimrokct(\alpha)$. 
\item Suppose
$\J\lhd \A$ is an $\alpha$-invariant ideal. Then the restriction of $\alpha$ to
$\J$ and the induced automorphism on $\A/\J$ both have Rokhlin dimension with
commuting towers at most $d$.
\end{enumerate}
\end{Prop}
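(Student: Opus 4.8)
The plan is to follow the strategy of Proposition~\ref{Prop:ideal-and-quotient-finite-group}, with the one essential modification that in the single-automorphism setting we no longer have an approximate identity that is exactly fixed by $\alpha$; instead we use Lemma~\ref{Lemma:almost-fixed-approximate-id}, which (since $\Z$ is amenable) provides a quasicentral approximate identity that is only \emph{approximately} fixed. First I would dispose of part~(2). The statement about the induced automorphism on $\A/\J$ is immediate: applying the quotient map $\pi\colon \A \to \A/\J$ to a $(d,F,\eps)$-double tower of height $p$ in $\A$, where $F$ is a lift of a prescribed finite subset of $\A/\J$, yields such a double tower in $\A/\J$, because $\pi$ is a contractive equivariant $*$-homomorphism, so each of the six conditions of Definition~\ref{nonunital Rokhlin-dim-single-auto} is preserved under $\pi$. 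The statement about the restriction to the ideal $\J$ is then the special case of part~(1) in which the hereditary subalgebra is an ideal.

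For part~(1), fix a finite set $F \subseteq \B$ with $\|b\| \leq 1$ for all $b \in F$, a tolerance $\eps > 0$, and a height $p > 0$. Applying Lemma~\ref{Lemma:almost-fixed-approximate-id} to the action $\alpha|_{\B}$ on $\B$, viewing $\B$ as an ideal in itself and taking $G_0 = \{-1,0,1\} \subseteq \Z$, I would choose a positive contraction $e \in \B$, far enough out in the resulting approximate identity, with $\|eb - b\| < \eta$ and $\|be - b\| < \eta$ for all $b \in F$ and with $\|\alpha(e) - e\| < \delta$, where $\eta, \delta > 0$ are small parameters to be fixed at the end. Next I would select a $(d, F \cup \{e\}, \eps')$-double tower $\left(f_{r,j}^{(l)}\right)$ of height $p$ in $\A$, using Remark~\ref{Rmk:single-auto-ortho} to arrange exact orthogonality within each tower, and set $x_{r,j}^{(l)} = e f_{r,j}^{(l)} e$. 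Since $\B$ is hereditary and $e \in \B$, we have $x_{r,j}^{(l)} \in \B \A \B \subseteq \B$, and these elements are the candidate double tower for $\beta$.

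It then remains to verify the six conditions of Definition~\ref{nonunital Rokhlin-dim-single-auto}. The orthogonality, approximate-unit, centrality, and commuting-tower conditions go through exactly as in the finite-group proof: one repeatedly uses that $f_{r,j}^{(l)}$ approximately commutes with $e$ (because $e$ lies in the finite set $F \cup \{e\}$ used for the ambient double tower) and with the elements of $F$, together with $e a \approx a$ for $a \in F$. The one genuinely new point, and the place I expect the main bookkeeping to lie, is the shift and wrap-around conditions, where the automorphism is applied. For these I would expand $\alpha(x_{r,j}^{(l)}) = \alpha(e)\,\alpha(f_{r,j}^{(l)})\,\alpha(e)$, replace each occurrence of $\alpha(e)$ by $e$ at a cost of $\delta$, and then invoke the ambient shift estimate $\left\|\left(\alpha(f_{r,j}^{(l)}) - f_{r,j+1}^{(l)}\right)a\right\| < \eps'$ with the test element $e a \approx a$; the total error is bounded by a fixed multiple of $\delta + \eta + \eps'$. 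Crucially, each condition of Definition~\ref{nonunital Rokhlin-dim-single-auto} involves only a \emph{single} application of $\alpha$, so approximate fixedness of $e$ under $\alpha$ alone suffices, and choosing $\delta$, $\eta$, and $\eps'$ small relative to $\eps$, $p$, and $d$ makes every condition hold within $\eps$. This completes part~(1), and with it the proposition.
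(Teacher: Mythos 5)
Your proposal is correct and follows essentially the same route as the paper, which itself only outlines this proof by reducing to the argument of Proposition~\ref{Prop:ideal-and-quotient-finite-group} with the exactly $\alpha$-fixed approximate identity element replaced by an approximately fixed one supplied by Lemma~\ref{Lemma:almost-fixed-approximate-id}. Your observations that the quotient case is immediate, that the ideal case is a special case of the hereditary case, and that each tower condition involves only a single application of $\alpha$ (so approximate invariance of $e$ suffices) are exactly the points the paper relies on.
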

\begin{proof}[Outline of proof]
The proof is very similar to that of Proposition \ref{Prop:ideal-and-quotient-finite-group}. The main change required is that the element $h \in \B$ cannot be chosen to be fixed under $\alpha$, and instead it needs to be chosen so that $\|\alpha(h) - h\|$ is sufficiently small (as in Lemma \ref{Lemma:almost-fixed-approximate-id}). We omit the details.
\end{proof}

\begin{Thm}
Let $A$ be a $C^*$-algebra and let $\alpha\in \aut(\A)$. Suppose $\J\lhd \A$ is an
$\alpha$-invariant ideal. Suppose the restriction of $\alpha$ to $\J$ and the
induced action on $\A/\J$ both have Rokhlin dimensions with commuting towers
$d_{\J},d_{\A/\J}$, respectively. Then $\dimrokct(\alpha) \leq d_{\J} +
d_{\A/\J} +1$.
\end{Thm}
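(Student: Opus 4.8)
The plan is to mirror the proof of Theorem \ref{Thm:extensions-finite-group}, since the statement is the single-automorphism analog of that theorem. The essential structure will be unchanged: I would lift a Rokhlin double tower for the quotient action on $\A/\J$ to positive contractions in $\A$, cut them down by $(1-q)$ for a suitable quasicentral approximate-identity element $q \in \J$, lift a Rokhlin double tower for the restricted action on $\J$, cut those down by $q$ via the symmetric product $\sqrt{y}\,q\,\sqrt{y}$, and then concatenate the two families of towers (stacking the $d_{\A/\J}+1$ levels coming from the quotient on top of the $d_{\J}+1$ levels coming from the ideal) to produce a $(d_{\A/\J}+d_{\J}+1, F, \eps)$-double tower for $\alpha$.

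The one genuine difference from the finite-group case, and the step I expect to be the main obstacle, is that $q$ can no longer be chosen exactly $\alpha$-invariant. In Theorem \ref{Thm:extensions-finite-group} the invariance $\alpha_g(q)=q$ was used crucially to pass from $\|\alpha_g(y_h^{(l)}q) - y_{gh}^{(l)}q\|$ back to $\|(\alpha_g(y_h^{(l)}) - y_{gh}^{(l)})q\|$ when verifying the group-translation condition (see the step leading to (\ref{2.10-E1})). For a single automorphism I only have approximate invariance, via Lemma \ref{Lemma:almost-fixed-approximate-id} applied to the amenable group $\Z$: given the finite set $G_0$ of translates that appear and a tolerance, I can choose $q$ with $\|\alpha(q)-q\|$ as small as I like. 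The plan is therefore to build this small error into the budget, choosing $\|\alpha(q)-q\|$ smaller than a fixed fraction of $\eps$ (depending on the double-tower height $p$ and on $d_{\J}$), so that the shift relations (conditions (4) and (5) of Definition \ref{nonunital Rokhlin-dim-single-auto}) for the cut-down ideal elements $z_{r,j}^{(k)} = \sqrt{y_{r,j}^{(k)}}\,q\,\sqrt{y_{r,j}^{(k)}}$ hold up to $\eps$ after replacing each $\alpha(y_{r,j}^{(k)}q)$ by $\alpha(y_{r,j}^{(k)})\alpha(q) \approx \alpha(y_{r,j}^{(k)})q \approx y_{r,j+1}^{(k)}q$, the first approximation being controlled by $\|\alpha(q)-q\|$ and the second by the shift condition of the ideal double tower.

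For the remaining conditions the arguments transcribe directly. Orthogonality within a level (condition (1)), the partition-of-unity condition (condition (2)), centrality (condition (3)), and approximate commutation of towers (condition (6)) are verified exactly as in the inequalities (\ref{2.10-E8}), (\ref{2.10-E6}), (\ref{2.10-x,a}), (\ref{2.10-E7}), and (\ref{2.10-E3}), (\ref{2.10-E2}), (\ref{2.10-E4}) of the finite-group proof, with the group-translation label $hg$ replaced by the shift $j+1$ throughout. In particular the symmetric cut $\sqrt{y}\,q\,\sqrt{y}$ is chosen, rather than the one-sided $yq$, precisely so that the $z_{r,j}^{(k)}$ remain genuinely positive and orthogonal at each level, using $y_{r,j}^{(k)}y_{r,j'}^{(k)}=0$ for $j\neq j'$ (arranged via Remark \ref{Rmk:single-auto-ortho}) together with the estimate $\|z_{r,j}^{(k)} - y_{r,j}^{(k)}q\|$ small that plays the role of (\ref{2.10-(**)}). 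I would state the result and then write simply that the proof is a routine modification of that of Theorem \ref{Thm:extensions-finite-group}, the only substantive change being the use of an approximately-invariant $q$ supplied by Lemma \ref{Lemma:almost-fixed-approximate-id} in place of the invariant one from Lemma \ref{Lemma:G-fixed-approximate-id}, and indicate how the extra error term $\|\alpha(q)-q\|$ is absorbed into the estimate for the shift condition; the remaining computations are omitted.
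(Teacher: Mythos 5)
Your proposal is correct and follows essentially the same route as the paper's own proof: lift the quotient double tower with exact orthogonality via projectivity of the cone over $\C^n$, cut down by $\sqrt{x}(1-q)\sqrt{x}$ and $\sqrt{y}\,q\,\sqrt{y}$ respectively, concatenate the levels, and replace the exactly invariant $q$ of the finite-group case by an approximately invariant one from Lemma \ref{Lemma:almost-fixed-approximate-id}, absorbing $\|\alpha(q)-q\|$ into the error budget for the shift conditions. This is precisely the modification the paper makes, so no further comment is needed.
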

\begin{proof}
To simplify notation, whenever we refer in this proof to a double tower of height $p$ of the form $\left (a_{r,j}^{(k)}\right )$, it is to be understood that $r=0,1$ and $j=0,1,\ldots,p-1+r$ and $k$ has suitable range (which may be $0,1,\ldots,d_J$ or $0,1,\ldots,d_{A/J}$, depending on the context). Parts of the proof mirror closely that of Theorem \ref{Thm:extensions-finite-group}, and those are mostly omitted here. 

We denote by $\pi\colon  \A \to \A/\J$ the quotient map, and by $\overline{\alpha}$ the
quotient action on $\A/\J$. Let $F \subseteq \A$ be a finite subset, let $\eps>0$
and let $p$ be a fixed positive integer. We assume without loss of generality that $\|a\|\leq 1$ for all $a \in F$. Pick a $(d_{\A/J},\pi(F),\eps/7)$-double tower  
$(b_{r,i}
^{(k)})$ in $\A/\J$ of height $p$. Using Remark \ref{Rmk:single-auto-ortho}, we  assume without loss of generality that 
$b_{r,i}^{(k)}b_{s,j}^{(k)} =0$ for all $k=0,1,\ldots,d$ and all $(r,i) \neq (s,j)$. 
Since the cone over $\C^n$ is projective, there are $x_{r,i}^{(k)}\in \A$ for the corresponding indices $r$, $i$ and $k$ such that 
 $0\leq x_{r,i}^{(k)} \leq 1$ and $\pi(x_{r,i}^{(k)}) =
b_{r,i}^{(k)}$, and moreover 
$x_{r,i}^{(k)}x_{s,j}^{(k)} = 0$ whenever $(r,i) \neq (s,j)$. 

If $(e_{\lambda})_{\lambda \in \Lambda}$ is an approximate identity for $\J$ then, for any $a \in F$,
$$
\lim_{\lambda}\|[x_{r,i}^{(k)},a](1-e_{\lambda})\| < \eps/7
\, .
$$
Use Lemma \ref{Lemma:almost-fixed-approximate-id} to choose a quasicentral approximate identity $(e_{\lambda})_{\lambda \in \Lambda}$ for $J$ such that
$\|\alpha(e_{\lambda}) - e_{\lambda}\|<\eps/7$ for all $\lambda \in \Lambda$. By taking 
an element far enough out in this approximate identity, we can find a positive contraction $q \in \J$ which satisfies the following conditions.
\begin{enumerate}
\item 
\label{3.5-item-1}
$\displaystyle \|qa - aq\|<\frac{\eps}{21(2p+1)(d_{\A/\J}+1)}$ for all $a$ in
$$
F \cup
\left \{\alpha\left (x_{r,i}^{(k)}\right ),\alpha\left (\sqrt{x_{r,i}^{(k)}}\right ) \mid \; (r,i) \; \mathrm{as} \; \mathrm{above} \; \mathrm{and} \;
k=0,1,\ldots,d_{\A/\J} \right \}
\, .
$$
\item $\left \|[x_{r,i}^{(k)},a](1-q) \right\|<\eps/7$  for all $a \in F$.
\item $\|\alpha(q) - q\|<\eps/7$.
\item $\displaystyle \left \| \left (1-\sum_{k=0}^{d_{\A/\J}}
\left ( 
\sum_{i=0}^{p-1} x_{0,i}^{(k)} + 
\sum_{i=0}^p x_{1,i}^{(k)}
\right )
\right )a(1-q)\right \|<\eps/7$ for all $a \in F$.
\item $\left \|\left (\alpha(x_{r,i}^{(k)}) - x_{r,i+1}^{(k)} \right )a(1-q) \right \| <\eps$
for $r=0,1$, for $i=0,1,\ldots,p-2+r$, for $k=0,1,\ldots,d_{\A/\J}$, and for all $a \in F$.
\item $\left \|\left (\alpha(x_{0,p-1}^{(k)} + x_{1,p}^{(k)} ) - (x_{0,0}^{(k)}
+x_{1,0}^{(k)}) \right)a(1-q) \right \| <\eps/7$ for $r=0,1$, for $i=0,1,\ldots,p-2+r$,
for $k=0,1,\ldots,d_{\A/\J}$, and for all $a \in F$.
\item $\left \|[x_{r,i}^{(k)},x_{s,j}^{(l)}]a(1-q) \right \| < \eps/7$ for all indices $(r,i),(s,j)$ as above and 
for $k,l =0,1,\ldots,d_{\A/\J}$.
\end{enumerate}

Now,  fix a finite set $\widetilde{F} \subseteq \J$ such that for all $a \in F$, all 
indices $r,i$ as above, and all $k=0,1,\ldots,d_{\A/J}$, we have $\textrm{dist}([x_{r,i}^{(k)},a],\widetilde{F})<\eps/7$. Let
$$
F_{\J} = \widetilde{F} \cup \{qa \mid a \in F\} \cup \{q\} \cup \{qx_{r,i}^{(k)}
\mid r=0,1, i=0,1,\ldots,p-1+r, k=0,1,\ldots,d_{\A/\J}\}
\, . 
$$
Set $M = \max \{\|f\| \mid f \in F_{\J}\} $. Choose $\delta \in (0,\eps/42)$ such that 
whenever $\B$ is a $C^*$-algebra and $b,c \in \B$ satisfy $0 \leq b \leq 1$, $\|c\|\leq M$, and $\|bc-cb\|<\delta$, then 
$$
\|b^{1/2}c - cb^{1/2}\| < \frac{\eps}{21(2p+1)(d_{\J}+1)} \, .
$$
Choose a system $\left ( y_{r,i}^{(k)} \right
)_{r,i,k}$ of $(d_{\J},F_{\J},\delta)$-double towers of height $p$  in $\J$. 
Using Remark \ref{Rmk:single-auto-ortho}, we can assume that $y_{r,i}^{(k)}y_{s,j}^{(k)} = 0$ for $k=0,1,\ldots,d_{\J}$ and all distinct pairs of indices
$(r,i) \neq (s,j)$ as above.
For $k=0,1,\ldots,d_{\A/\J}$ and the indices $(r,i)$ as above, we then have
$$
\| [\sqrt{y_{r,i}^{(k)}},a] \| < \frac{\eps}{21(2p+1)(d_{\J}+1)}
$$
for all $a \in F_{\J}$. 
 Set $z_{r,i}^{(k)} = \sqrt{y_{r,i}^{(k)}}q\sqrt{y_{r,i}^{(k)}}$.

Using arguments similar to ones in the proof of Theorem \ref{Thm:extensions-finite-group}, one checks that:
\begin{enumerate}
\setcounter{enumi}{7}
\item $\|\left ( \alpha(z_{r,i}^{(k)}) - z_{r,i+1}^{(k)} \right ) a\|<\eps$ for $r=0,1$, for $i=0,1,\ldots,
p-2+r$, for $k=0,1,\ldots,d_{\A/\J}$, and for all $a \in F$. (See (\ref{2.10-E1}) in the proof of Theorem \ref{Thm:extensions-finite-group} and use $\|\alpha(q) - q\|<\eps/7$.)
\item $\|\left ( \alpha(z_{0,p-1}^{(k)} + z_{1,p}^{(k)}) - (z_{0,0}^{(k)} +
z_{1,0}^{(k)}) \right )a \|<\eps$ for $k=0,1,\ldots,d_{\J}$ and all $a \in F$. (This is similar to the previous item.)
\item $\|[z_{r,i}^{(k)},a]\|<\eps$ for $k=0,1,\ldots,d_{\J}$, for all $(r,i)$ as above, and for all $a \in F$. (See (\ref{2.10-E7}) in the proof of Theorem \ref{Thm:extensions-finite-group}.) 
\item $\displaystyle \left  \|
\sum_{k=0}^{d_{\J}} \left (
\sum_{i=0}^{p-1} z_{0,i}^{(k)} + \sum_{i=0}^{p}  z_{1,i}^{(k)} \right ) - q \right \|<\eps$. (See (\ref{2.10-4X23_1}) in the proof of Theorem \ref{Thm:extensions-finite-group}.) 
\item $\left \|\left [z_{r,i}^{(k)},z_{s,j}^{(l)} \right ] \right \|<\eps$ for all indices $r,i,s,j,k,l$ as above. (See (\ref{2.10-E2}) in the proof of Theorem \ref{Thm:extensions-finite-group}.) 
\end{enumerate}

For $k=0,1,\ldots,d_{\A/\J}$ and for all indices $(r,i)$ as above, set 
$$
\widetilde{x}_{r,i}^{(k)} = \sqrt{x_{r,i}^{(k)}}(1-q)\sqrt{x_{r,i}^{(k)}}
\, .
$$
We have, using (\ref{3.5-item-1}),  
$$
\|\widetilde{x}_{r,i}^{(k)} -  x_{r,i}^{(k)}(1-q) \| <
\frac{\eps}{21(2p+1)(d_{\A/\J}+1)}
\, ,
$$ 
so by an argument similar to that for (\ref{2.10-E5}) in the proof of Theorem \ref{Thm:extensions-finite-group}, and using $\|\alpha(q)-q\|<\eps/7$, we get
$$
\left \|\left ( \alpha(\widetilde{x}_{r,i}^{(k)}) - \widetilde{x}_{r,i+1}^{(k)} \right ) a \right \| < \eps  
$$ 
for $r=0,1$, for $i=0,1,\ldots,p-2+r$, for $k=0,1,\ldots,d_{\A/\J}$, and for all $a \in F$. Likewise,
$$
\left \|\left ( \alpha(\widetilde{x}_{0,p-1}^{(k)} + \widetilde{x}_{1,p}^{(k)})) -
(\widetilde{x}_{0,0}^{(k)}+\widetilde{x}_{1,0}^{(k)})\right )a \right \| < \eps  
\, .
$$ 
An argument similar to that for (\ref{2.10-x,a}) in Theorem \ref{Thm:extensions-finite-group} gives
$$
\left \| \left [\widetilde{x}_{r,i}^{(k)} , a \right ] \right \| < \eps
$$
for $k=0,1,\ldots,d_{A/J}$, appropriate indices $(r,i)$, and $a \in F$.
An argument like that for (\ref{2.10-4X23_2}) in the proof of Theorem \ref{Thm:extensions-finite-group} gives
$$
 \left \| \left (\sum_{k=0}^{d_{\A/\J}} \left ( \sum_{i=0}^{p-1} \widetilde{x}_{0,i}^{(k)} + \sum_{i=0}^p \widetilde{x}_{1,i}^{(k)} \right ) \right )a
- a(1-q)\right \|<\eps
$$
for all $a \in F$. For the approximate commutation condition, we have
(see (\ref{2.10-E3}) in the proof of Theorem \ref{Thm:extensions-finite-group}):
$$
\left \| \left [\widetilde{x}_{r,i}^{(k)},\widetilde{x}_{s,j}^{(l)} \right ]a\right \| \leq
\left \|(1-q)\left [x_{r,i}^{(k)},x_{s,j}^{(l)} \right ] a (1-q) \right \|  + \frac{\eps}{7} + \frac{4\eps}{21} < \eps \, .
$$
Now, for all applicable indices, we have (see (\ref{2.10-E4}) in the proof of Theorem \ref{Thm:extensions-finite-group}):
$$
\left \| \left [\widetilde{x}_{r,i}^{(k)},z_{s,j}^{(l)} \right ]a \right \| \leq
\left \| \left [qx_{r,i}^{(k)},y_{s,j}^{(l)} \right ] \right \| \cdot \|(1-q)a\| + \frac{4\eps}{21} + 4\delta < \eps 
\, .
$$
Finally, for any $a \in F$ we have (see (\ref{2.10-E6}) in the proof of Theorem \ref{Thm:extensions-finite-group}): 
$$
  \left \| \left (\sum_{k=0}^{d_{\A/\J}}\sum_{r,i} \widetilde{x}_{r,i}^{(k)}  +
\sum_{k=0}^{d_{\J}}\sum_{s,j} z_{s,j}^{(k)} \right) a - a\right \|  < \eps
\, .
$$

For $k=0,1,\ldots,d_{\A/\J} + d_{\J} + 1$, and all indices $(r,i)$ as above, set
$$
f_{(r,i)}^{(k)} = \left \{ 
\begin{matrix}
\widetilde{x}_{(r,i)}^{(k)} & \mid & k=0,1,\ldots,d_{\A/J} \\
z_{(r,i)}^{(k-d_{\A/\J} -1 )} & \mid & k=d_{\A/\J} + 1,d_{\A/\J} + 2,\ldots,d_{\A/\J} + d_{\J} + 1   
\, .
\end{matrix}
\right .
$$

Then the family $\left (f_{r,i}^{(k)}\right)$ for $k=0,1,\ldots,d_{\A/\J}+d_{\J}+1$ is a Rokhlin system of 
$(d_{\A/\J}+d_{\J}+1,F,\eps)$-double towers of height $p$. This completes the proof that  $\dimrokct(\alpha) \leq
d_{\A/\J} + d_{\J} + 1$, as required.
\end{proof}

\section{Obstructions to finite Rokhlin dimension}
\label{sec:obstructions}

The purpose of this section is to find a $K$-theoretic obstruction for an action
of a compact Lie group to have the $X$-Rokhlin property (and in particular,
finite Rokhlin dimension with commuting towers in the finite group case). This
will be stated in Corollary \ref{C_3410_IGAnnXR}. This obstruction uses
equivariant $K$-theory, viewed as a module over the representation ring. Using
this obstruction and the generalization of the Atiyah-Segal completion theorem
to $C^*$-algebras from \cite{Ph4}, we can show, for instance (Theorem
\ref{T_3410_NoXRkhOI}) that there are no actions of any nontrivial finite
group on the Jiang-Su algebra $\Zh$ or on the Cuntz algebra $\Oh_{\infty}$ with
finite Rokhlin dimension with commuting towers.

Let $G$ be a compact group,
let $\A$ be a unital $C^*$-algebra,
and let $\alpha \colon G \to \aut (\A)$
be an action of $G$ on~$\A.$
We let $R (G)$ be its representation ring,
the Grothendieck group made from the
unitary equivalence classes
of finite dimensional unitary representations of~$G$
with product given by tensor product of representations.
See~\cite{Sg1} for an extensive discussion of this ring.
If $V$ is a finite dimensional unitary representation space of~$G,$
we denote by $[V]$ its class in $R (G).$
We let $I (G)$ be the augmentation ideal of $R (G),$
that is, the kernel of the map $R (G) \to \Z$
which sends $[V]$ to $\dim (V)$ for every
finite dimensional unitary representation space of~$G.$

We take $K_0^G (A)$ to be defined following
Definition 2.4.2 and Corollary 2.4.5 of~\cite{Ph1},
except using projections and invariant partial isometries
instead of idempotents and algebraic
invariant Murray-von Neumann equivalence.
That is,
we consider the Grothendieck group made from the
semigroup of invariant Murray-von Neumann equivalence classes
of $G$-invariant projections in algebras $B (V) \otimes A,$
in which $V$ is a finite dimensional unitary representation space of~$G,$
with action by conjugation by the representation.
Projections $p \in B (V) \otimes \A$
and $q \in B (W) \otimes \A$
are invariantly Murray-von Neumann equivalent
if there is a $G$-invariant element $s \in B (V, W) \otimes A,$
with its obvious action of~$G$
(using the representations on both $V$ and~$W$)
such that $s^* s = p$ and $s s^* = q.$
For a $C^*$-algebra $\A$, this gives the same group defined there, by \cite[Proposition 2.4.11(2)]{Ph1}.

We warn that it is {\emph{not}} enough to find
an equivariant isomorphism
$\varphi \colon B (V) \to B (W)$
such that $(\varphi \otimes \id_{\A} ) (p) = q.$
The existence of such an isomorphism $\varphi$
does {\emph{not}} imply that $[p] = [q]$ in $K_0^G (\A).$

The group $K_0^G (\A)$ is an $R (G)$-module in a natural way. See
\cite[Definition 2.2.2]{Ph1} and the discussion that follows.
We recall (\cite[Remark 2.4.6]{Ph1})
that if $p \in B (V) \otimes A$ is an invariant projection
and $W$ is a finite dimensional unitary representation space of~$G,$
then $1 \otimes p \in B (W) \otimes B (V) \otimes \A$
represents $[W] \cdot [p].$

\begin{Prop}\label{P_3410_RGOnA}
Let $G$ be a compact group,
let $\A$ and $\Ch$ be unital $C^*$-algebras,
and let $\alpha \colon G \to \aut (\A)$
and $\gamma \colon G \to \aut (\Ch)$
be actions of $G$ on $\A$ and~$\Ch.$
Let $\sigma \in R (G).$
If $\A$ admits  an
approximate equivariant central unital homomorphism from
$\Ch$ in the sense of Definition \ref{Def:ecm-map},
and $\sigma \cdot [1] = 0$ in $K_0^G (\Ch),$
then $\sigma \cdot \eta = 0$ for every $\eta \in K_*^G (\A).$
\end{Prop}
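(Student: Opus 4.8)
The plan is to transfer the relation $\sigma \cdot [1] = 0$ from $K_0^G(\Ch)$ to $K_*^G(\A)$ by using an approximate equivariant central unital homomorphism $Q \colon \Ch \to \A$ to push forward invariant projections, and then exploit centrality to absorb an arbitrary class $\eta \in K_*^G(\A)$ multiplicatively. First I would reduce to the case $\eta \in K_0^G(\A)$: since $K_1^G$ is handled by the usual suspension/tensoring-with $C_0(\R)$ trick (or by passing to the obvious degree-shifted statement), it suffices to treat a class represented by a $G$-invariant projection $e \in B(W) \otimes \A$ for some finite dimensional unitary representation space $W$. The key point is that $\sigma \cdot \eta$ and $\sigma \cdot [1] \cdot \eta$ should be made to agree, and that $Q$ lets me realize $\sigma \cdot [1]_{\A}$ as the image of $\sigma \cdot [1]_{\Ch}$, which vanishes by hypothesis.

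The central step is to make this precise at the level of the central sequence algebra. By Lemma~\ref{Lemma:ecm-central-sequence} (note $\Ch$ is nuclear is required there; in the generality of Proposition~\ref{P_3410_RGOnA} I would instead work directly with approximate maps and a limiting argument, or assume the hypotheses that make the sequence-algebra reformulation available), the approximate equivariant central unital homomorphism gives, at least approximately, an equivariant unital map whose image commutes with $\A$. I would tensor with $B(W)$ and apply $\id_{B(W)} \otimes Q$ to an invariant projection $p \in B(V) \otimes \Ch$ representing a class with $\sigma \cdot [p]$ controlled; because $Q$ is approximately multiplicative on a large enough finite set and approximately equivariant, the image $(\id \otimes Q)(p)$ is close to an invariant projection in $B(V) \otimes \A$, and standard functional-calculus perturbation arguments produce a genuine invariant projection defining a well-defined class in $K_0^G(\A)$. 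This realizes a ring map (or at least a module map compatible with the $R(G)$-action) $Q_* \colon K_0^G(\Ch) \to K_0^G(\A)$ with $Q_*([1_{\Ch}]) = [1_{\A}]$ and $Q_*(\sigma \cdot x) = \sigma \cdot Q_*(x)$, using that the $R(G)$-module structure is implemented by tensoring with representation spaces $1 \otimes p$ as recalled before the proposition, and that $Q$ is equivariant.

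The multiplicative absorption of $\eta$ is where centrality does the real work: given the invariant projection $e \in B(W)\otimes \A$ representing $\eta$, the image of $Q$ commutes with $\A$, hence (after tensoring with $B(W)$) with $e$, so I can form products $(\id \otimes Q)(p) \cdot e$ inside a single matrix algebra over $\A_\infty \cap \A'$ and compute $\sigma \cdot \eta$ as $Q_*(\sigma \cdot [1_\Ch]) \cdot \eta = 0 \cdot \eta = 0$. Concretely, $\sigma \cdot \eta = (\sigma \cdot [1_\A]) \cdot \eta = Q_*(\sigma \cdot [1_\Ch]) \cdot \eta$, and since $\sigma \cdot [1_\Ch] = 0$ the whole product vanishes. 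The main obstacle I anticipate is making the map $Q_*$ genuinely well-defined and $R(G)$-linear from only \emph{approximate} multiplicativity and equivariance: one must check that invariantly Murray--von Neumann equivalent projections have images that remain equivalent after perturbation to honest invariant projections, and that the approximate equivariance is strong enough (uniformly in $g \in G$, as in condition~(\ref{ecm-map-almost-equivariance}) of Definition~\ref{Def:ecm-map}) to preserve the invariant equivalences $s^*s = p$, $ss^* = q$. Using Remark~\ref{Rmk:ecm-map-equiv} to replace $Q$ by a genuinely $G$-equivariant map on the relevant finite/compact sets removes the equivariance difficulty, leaving only the standard stability of $K$-theory classes under small perturbations, which I would handle by choosing $\eps$ small relative to the spectral gaps of the finitely many projections involved.
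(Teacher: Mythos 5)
Your proposal is correct and follows essentially the same route as the paper: reduce to $K_0^G$, witness $\sigma\cdot[1_{\Ch}]=0$ by a $G$-invariant partial isometry $s$ with $s^*s=(p_0+p_1)\otimes 1$ and $ss^*=(p_0+p_2)\otimes 1$, push $s$ through an equivariantized $Q$ chosen for the finite set of its matrix entries, cut down by $1\otimes e$, and correct by functional calculus to obtain the invariant equivalence $(p_0+p_1)\otimes e\sim(p_0+p_2)\otimes e$. The only difference is presentational: you package the argument as an $R(G)$-bilinear pairing $K_0^G(\Ch)\times K_0^G(\A)\to K_0^G(\A)$ (note that $K_0^G(\A)$ carries no ring structure, so your expression $(\sigma\cdot[1_{\A}])\cdot\eta$ must be read through that pairing, with the well-definedness issue you flag), whereas the paper avoids constructing any global map $Q_*$ by transferring only the single Murray--von Neumann equivalence it actually needs.
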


\begin{proof}
Applying Lemma \ref{L_3410_TPrdCt} with $\B = C (\T)$
with the trivial action,
and using $K_*^G (\A) \cong K_0^G (C (\T) \otimes \A),$
we see that it suffices to consider $\eta \in K_0^G (\A).$
We need only prove the result for $\eta$ in a generating
set for $K_0^G (\A).$
Thus, we may assume that there is
a finite dimensional unitary representation space $W$ of~$G$
and a $G$-invariant projection $e \in B (W) \otimes \A$
such that $\eta = [e].$

Choose finite dimensional unitary representation spaces $V_1$ and $V_2$ of~$G$
such that $\sigma = [V_1] - [V_2].$
Then $1 \otimes 1 \in B (V_1) \otimes \Ch$
and $1 \otimes 1 \in B (V_2) \otimes \Ch$
have the same class in $K_0^G (\Ch).$
Therefore there is a finite dimensional unitary representation space $V_0$
of~$G$
such that,
with $V = V_0 \oplus V_1 \oplus V_2$
and with $p_0, p_1, p_2 \in B (V)$
being the projections onto $V_0,$ $V_1,$ and $V_2,$
there is a $G$-invariant partial isometry $s \in B (V) \otimes \Ch$
with
\begin{equation}\label{Eq_3410_MvN}
s^* s = (p_0 + p_1) \otimes 1
\; \textrm{ and } \;
s s^* = (p_0 + p_2) \otimes 1.
\end{equation}

Let $\varphi \colon B (V) \otimes \A \to  B(V) \otimes B(W) \otimes \A$
be the equivariant homomorphism
such that $\varphi (d \otimes a) = d \otimes 1 \otimes a$
for all $d \in B(V)$ and $a \in \A.$
Identify $B (V)$ with $M_m$ and $B (W)$
with $M_n,$
and choose $F \subseteq \A$ to consist of all the matrix entries
of~$e$
and $F_0 \subseteq \C$ to consist of all the matrix entries
of~$s.$ For sufficiently small $\eps>0$ (described below), 
choose $Q_0$ as in Definition \ref{Def:ecm-map}. As explained in Remark \ref{Rmk:ecm-map-equiv}, we may assume that $Q_0$ is $G$-equivariant. 
Set $Q = \id_{B(V)} \otimes Q_0$. 
Then $(\varphi \circ Q) (s)$ will approximately commute
with $1_{B (W)} \otimes e.$
Set $t = (\varphi \circ Q) (s) (1_{B (W)} \otimes e).$
Then $t$ is $G$-invariant.
We have
$$
(\varphi \circ Q) (p_0 + p_1) \cdot (1_{B (W)} \otimes e)
  = [(p_0 + p_1) \otimes 1_{B (V)} \otimes 1 \big]
        (1_{B (W)} \otimes e)
  = (p_0 + p_1) \otimes e
$$
and similarly
$$
(\varphi \circ Q) (p_0 + p_1) \cdot (1_{B (W)} \otimes e)
  = (p_0 + p_2) \otimes e.
$$

If $\eps > 0$ is sufficiently small,
using~(\ref{Eq_3410_MvN}), 
approximate multiplicativity of~$Q$,
and the fact that
$$
\big\| (\varphi \circ Q) (s) (1_{B (W)} \otimes e)
       - (1_{B (W)} \otimes e) (\varphi \circ Q) (s) \big\|
$$
is small,
we will get
$$
\| t^* t - (p_0 + p_1) \otimes e \| < 1
\; \textrm{ and } \;
\| t t^* - (p_0 + p_2) \otimes e \| < 1.
$$
A standard functional calculus argument therefore gives
an invariant element $v \in B (V) \otimes B (W) \otimes \A$
such that
\[
v^* v = (p_0 + p_1) \otimes e
\; \textrm{ and } \;
v v^* = (p_0 + p_2) \otimes e.
\]
These relations imply that
$([V_0] + [V_1]) [e] = ([V_0] + [V_2]) [e]$
in $K_0^G (\A).$
Therefore $\sigma [e] = [V_1] \cdot [e] - [V_2] \cdot [e] = 0.$
\end{proof}

We don't require Lie groups to be connected.
In particular,
all finite groups are compact Lie groups.

\begin{Cor}\label{C_3410_IGAnnXR}
Let $G$ be a compact Lie group (not necessarily connected),
and let $X$ be a compact free $G$-space.
Then there is $n \in \N$ such that,
for every unital $C^*$-algebra $\A$
and every action $\alpha \colon G \to \aut (\A)$
which has the $X$-Rokhlin property,
we have $I (G)^n K_*^G (\A) = 0.$
\end{Cor}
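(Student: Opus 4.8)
The plan is to feed the $X$-Rokhlin property into Proposition \ref{P_3410_RGOnA} and then reduce the entire statement to a purely topological nilpotence fact about $C(X)$ that does not involve $\A$ at all. By Definition \ref{D_3410_XRokh}, saying that $\alpha$ has the $X$-Rokhlin property is exactly saying that $\A$ admits an approximate equivariant central unital homomorphism from $\Ch = C(X)$. Thus Proposition \ref{P_3410_RGOnA} applies: for every $\sigma \in R(G)$ with $\sigma \cdot [1] = 0$ in $K_0^G(C(X))$ we obtain $\sigma \cdot \eta = 0$ for all $\eta \in K_*^G(\A)$. The crucial point is that the condition ``$\sigma \cdot [1] = 0$ in $K_0^G(C(X))$'' depends only on $G$ and $X$, not on $\A$. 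Setting $J = \{\sigma \in R(G) : \sigma \cdot [1] = 0 \text{ in } K_0^G(C(X))\}$, which is the annihilator of $[1]$ and hence an ideal of $R(G)$, I would conclude that $J \cdot K_*^G(\A) = 0$ for every $\A$ carrying an $X$-Rokhlin action. The corollary therefore follows once I exhibit an $n \in \N$, depending only on $G$ and $X$, with $I(G)^n \subseteq J$, that is, with $I(G)^n \cdot [1] = 0$ in $K_0^G(C(X))$.

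First I would identify $K_0^G(C(X))$ with topological equivariant K-theory $K_G^0(X)$, under which $[1]$ is the class of the trivial equivariant bundle $X \times \C$, namely the image of $1 \in R(G) = K_G^0(\mathrm{pt})$ under the restriction map $\iota_X \colon R(G) \to K_G^0(X)$ along $X \to \mathrm{pt}$. Since the $R(G)$-module structure is the standard one, $\sigma \cdot [1] = \iota_X(\sigma)$, so $J = \ker \iota_X$ and the task becomes to show that $I(G)^n$ lies in the kernel of the ring homomorphism $\iota_X$ for some~$n$. Now I would use freeness: by Segal's theorem, for a compact free $G$-space the map $\iota_X$ is identified with the associated-bundle ring homomorphism $R(G) \to K^0(X/G)$, $[V] \mapsto [X \times_G V]$, under which $I(G)$, the kernel of the dimension map, is carried into the rank-zero part $\widetilde{K}^0(X/G)$.

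What remains --- and this is the step I expect to be the main obstacle --- is to show that the image ideal $\iota_X(I(G))$ is nilpotent, with nilpotence degree bounded in terms of $G$ and $X$ alone. When $X$ (equivalently $X/G$) is finite-dimensional this is clean: the Atiyah--Hirzebruch filtration $\{F^p K^0(X/G)\}_p$ satisfies $F^p \cdot F^q \subseteq F^{p+q}$, one has $\widetilde{K}^0(X/G) \subseteq F^1$, and $F^{d+1} = 0$ once $\dim(X/G) \leq d$, so $I(G)^{d+1} \subseteq \ker \iota_X = J$ and $n = d+1$ works. The genuine difficulty is to make this uniform without a dimension hypothesis; here I would invoke the equivariant localization machinery, noting that freeness forces the support of the $R(G)$-module $K_*^G(X)$ to lie in the closed set $V(I(G)) \subseteq \mathrm{Spec}\, R(G)$, whence finite generation of $K_*^G(X)$ over the Noetherian ring $R(G)$ yields a single power $I(G)^n$ annihilating $[1]$. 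Verifying finite generation together with the support statement for a general compact, possibly infinite-dimensional, free $G$-space is the delicate point; the finite-dimensional case, which is the one relevant to Lemma \ref{L_3410_CommTw} and to the application in Theorem \ref{T_3410_NoXRkhOI}, is already settled by the elementary filtration argument above.
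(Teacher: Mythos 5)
Your reduction is exactly the one the paper uses: the $X$-Rokhlin property means $\A$ admits an approximate equivariant central unital homomorphism from $C(X)$, so Proposition~\ref{P_3410_RGOnA} converts the corollary into the purely topological assertion that some power $I(G)^n$ annihilates $[1]$ in $K_0^G(C(X)) \cong K_G^0(X)$, with $n$ depending only on $G$ and $X$. That part is correct and is precisely how the paper argues; the paper then simply quotes \cite[Proposition~4.3]{AS}, which says that for a compact free $G$-space $X$ one has $I(G)^n K_G^*(X) = 0$ for some $n$, with no dimension hypothesis.

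The gap is in your treatment of that topological input. Your Atiyah--Hirzebruch filtration argument is fine but only covers the case where $X/G$ is finite-dimensional, a hypothesis absent from the statement. Your proposed repair for the general case does not work as described: $K_G^*(X)$ need \emph{not} be finitely generated over $R(G)$ for a general compact free $G$-space. For instance, let $G = \Z_2$ act freely on $X = G \times C$ with $C$ the Cantor set; then $K_G^0(X) \cong K^0(C) \cong C(C,\Z)$ with $R(G)$ acting through the augmentation, which is not finitely generated over $R(G)$. (The conclusion of the corollary still holds there, with $n=1$, but the localization-plus-Noetherian route you sketch collapses at the finite generation step.) The correct general argument, and the content of \cite[Proposition~4.3]{AS}, needs only compactness: since $G$ is a compact Lie group acting freely, $X \to X/G$ is a locally trivial principal bundle, so the compact base admits a finite cover $A_1, \ldots, A_n$ over each of which the bundle is trivial; then $K_G^*(\pi^{-1}(A_i)) \cong K^*(A_i)$ with $R(G)$ acting via the augmentation, so $I(G)\, K_G^*(X)$ lands in each kernel ideal $J_i = \ker\bigl(K_G^*(X) \to K_G^*(\pi^{-1}(A_i))\bigr)$, and the multiplicative structure of relative equivariant K-theory gives $J_1 J_2 \cdots J_n = 0$, whence $I(G)^n K_G^*(X) = 0$. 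Either cite this result or supply the covering argument; as written, your proof establishes the corollary only under an extraneous finite-dimensionality assumption.
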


\begin{proof}It follows from \cite[Proposition 4.3]{AS}
that there is $n \in \N$ such that $I (G)^n K^*_G (X) = 0.$
Let $\A$ be a unital $C^*$-algebra,
let $\alpha \colon G \to \aut (\A)$ be an action,
and suppose that $\alpha$ has the $X$-Rokhlin property.
Then Proposition \ref{P_3410_RGOnA}
implies that $I (G)^n K_*^G (\A) = 0.$
\end{proof}

We will need $\sigma$-$C^*$-algebras (see \cite{Ph2})
and their representable K-theory $R K_*$
(see~\cite{Ph3};
there is an easier development in \cite[Section 4]{Ph5}).
For a summary,
see the end of the introduction to~\cite{Ph4}
and the beginning of Section~2 of~\cite{Ph4}.
For a compact Lie group~$G,$
we need a suitable model of the free contractible $G$-space~$E G,$
and we follow Section~2 of~\cite{Ph4}
(which in turn follows Section~2 of~\cite{AS}).
In particular, our model for $E G$
will be countably compactly generated.
For any countably compactly generated space~$X,$
we take $C (X)$ to be the algebra of all continuous functions
from $X$ to~$\C,$
not necessarily bounded,
with the topology of uniform convergence on the sets
in the chosen
countable compact generating family.
This makes $C (X)$ a $\sigma$-$C^*$-algebra.
In particular,
$C (E G)$ is a $\sigma$-$C^*$-algebra.
If $\A$ is a $C^*$-algebra with an action $\alpha \colon G \to \aut (\A),$
then also $\A \otimes C (E G)$
(using a suitable completed tensor product)
and the fixed point algebra $[\A \otimes C (E G)]^G$
are $\sigma$-$C^*$-algebras.
If we have more than one action on~$\A,$
we will write $[\A \otimes C (E G)]^{\alpha},$
thus implicitly using the same name for the action on~$\A$
and the corresponding action on $\A \otimes C (E G).$

\begin{Notation}\label{N_3410_Compl}
Let $G$ be a compact Lie group,
let $R (G)$ be its representation ring,
and let $I (G) \lhd R (G)$ be the augmentation ideal.
For any $R (G)$-module~$M,$
we denote by $M^{\wedge}$ its $I (G)$-adic completion, that is, 
$\displaystyle M^{\wedge} =  \varprojlim M/I(G)^nM$.
(This is the Hausdorff completion.)
\end{Notation}

We will need the following version of the Atiyah-Segal Completion Theorem
for $C^*$-algebras, which is a restatement of \cite[Theorem 2.4]{Ph4}.

\begin{Thm}\label{T_3410_ASCmpl}
Let $G$ be a compact Lie group,
let $\A$ be a unital $C^*$-algebra,
and let $\alpha \colon G \to \aut (\A)$
be an action of $G$ on~$\A.$
Suppose $G$ is abelian,
and there are a finite generating set $S \subseteq {\widehat{G}}$
and $n \in \N$
such that for every $\tau \in S$
and every $k \geq n,$
we have
$$
\big\{ \eta \in K_*^G (\A) \colon (1 - \tau)^k \eta = 0 \big\}
 = \big\{ \eta \in K_*^G (A) \colon (1 - \tau)^{n} \eta = 0 \big\}.
$$
Then there is a natural isomorphism
$$
R K_* \big( [\A \otimes C (E G)]^{G} \big) \cong K_*^G (\A)^{\wedge}.
$$
\end{Thm}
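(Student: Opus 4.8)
The plan is to realize $E G$ as an inverse limit of compact free $G$-spaces and to transport the classical Atiyah--Segal argument through the Milnor sequence for representable K-theory. Following Section~2 of \cite{Ph4} (which in turn follows Section~2 of \cite{AS}), I would fix a cofinal increasing family of finite free $G$-CW complexes $X_1 \subseteq X_2 \subseteq \cdots$ (for instance iterated joins of~$G$) whose union is a model for $E G$, so that $C (E G) = \varprojlim_n C (X_n)$ as $\sigma$-$C^*$-algebras. Since forming the completed tensor product with $\A$ and passing to the fixed point algebra both commute with these inverse limits, we obtain
$$
[\A \otimes C (E G)]^{G} = \varprojlim_n \, [\A \otimes C (X_n)]^{G}.
$$

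With this presentation, the first step is the Milnor $\varprojlim^1$ exact sequence for the representable K-theory of an inverse limit of $C^*$-algebras:
$$
0 \to {\varprojlim_n}^{1} K_{*+1}\big([\A \otimes C (X_n)]^{G}\big) \to RK_* \big([\A \otimes C (E G)]^{G}\big) \to \varprojlim_n K_* \big([\A \otimes C (X_n)]^{G}\big) \to 0.
$$
The second step uses that each $X_n$ is a \emph{free} $G$-space to identify $K_* \big([\A \otimes C (X_n)]^{G}\big)$ with the equivariant K-theory $K_*^G (\A \otimes C (X_n))$: for a free action the fixed point algebra is Morita equivalent to the crossed product, and the Green--Julg theorem identifies the K-theory of the crossed product with $K_*^G$. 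This rewrites both the $\varprojlim$ and $\varprojlim^1$ terms in terms of equivariant K-theory of $\A$ twisted by $C (X_n)$.

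The technical heart is to show that the pro-system $\big\{ K_*^G (\A \otimes C (X_n)) \big\}_n$ is pro-isomorphic to $\big\{ K_*^G (\A) / I (G)^n K_*^G (\A) \big\}_n$. The nonequivariant input is the Atiyah--Segal estimate underlying \cite[Proposition 4.3]{AS}, to the effect that $\{ K^0_G (X_n) \}_n$ is pro-isomorphic to $\{ R (G) / I (G)^n \}_n$; the problem is to upgrade this from representation-ring coefficients to the general module $K_*^G (\A)$. Here the hypothesis that $G$ is abelian lets us write $I (G)$ as generated by $\{ 1 - \tau \colon \tau \in S \}$ for the finite set $S \subseteq \widehat{G}$, and the stabilization hypothesis on the $(1 - \tau)^k$-torsion is precisely what forces the relevant towers to be Mittag--Leffler. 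Carrying out an exact-couple argument over the generators in $S$, one shows simultaneously that the $\varprojlim^1$ term vanishes and that the surjective tower $\{ K_*^G (\A) / I (G)^n K_*^G (\A) \}_n$ computes the surviving $\varprojlim$, giving a natural isomorphism
$$
RK_* \big([\A \otimes C (E G)]^{G}\big) \cong \varprojlim_n K_*^G (\A) / I (G)^n K_*^G (\A) = K_*^G (\A)^{\wedge}.
$$

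The step I expect to be the main obstacle is exactly this transfer from trivial coefficients to module coefficients while controlling $\varprojlim^1$. Without the torsion-stabilization hypothesis the tower $\{ K_*^G (\A \otimes C (X_n)) \}_n$ need not be Mittag--Leffler, and $RK_*$ could acquire a nonzero $\varprojlim^1$ contribution invisible in the completion $K_*^G (\A)^{\wedge}$; verifying that the hypothesis annihilates this term, and checking naturality of the resulting isomorphism, is where the genuine work lies.
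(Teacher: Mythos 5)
First, a point of reference: the paper does not actually prove this theorem. It is presented explicitly as ``a restatement of \cite[Theorem 2.4]{Ph4}'', so the only in-paper argument is a citation to the second author's Atiyah--Segal completion theorem for $C^*$-algebras. Measured against the proof in that reference, your outline reconstructs the architecture correctly: the countably compactly generated model of $E G$ as an increasing union of compact free $G$-spaces $X_n$, giving $[\A \otimes C (E G)]^{G} = \varprojlim_n [\A \otimes C (X_n)]^{G}$; the Milnor $\varprojlim^1$ sequence for representable $K$-theory of a $\sigma$-$C^*$-algebra presented as an inverse limit along surjections (from \cite{Ph3}); and the identification $K_*\big([\A \otimes C (X_n)]^{G}\big) \cong K_*^G(\A \otimes C (X_n))$, which uses freeness to get saturation and hence Morita equivalence of the fixed-point algebra with the crossed product, followed by Julg's theorem.

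The genuine gap is the one you flag yourself: everything from ``The technical heart is to show\dots'' onward is a description of what must be proved rather than a proof, and it is precisely the content of the theorem. What is actually needed, and what \cite{Ph4} supplies, is the following. For $G$ abelian one takes $X_n$ built from unit spheres $S(V_\tau^{\oplus n})$ of multiples of the one-dimensional representations $V_\tau$, $\tau \in S$; the Gysin sequence with Euler class $\lambda_{-1}(V_\tau^{\oplus n}) = (1-\tau)^n$ yields a long exact sequence
$$
\cdots \longrightarrow K_*^G(\A) \xrightarrow{\;(1-\tau)^n\;} K_*^G(\A) \longrightarrow K_*^G\big(\A \otimes C(S(V_\tau^{\oplus n}))\big) \longrightarrow \cdots ,
$$
so that each term of the tower is an extension of the $(1-\tau)^n$-torsion submodule (shifted in degree) by $K_*^G(\A)/(1-\tau)^n K_*^G(\A)$. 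The hypothesis that the $(1-\tau)^k$-torsion stabilizes for $k \geq n$ is exactly what makes the torsion part of this tower Mittag--Leffler and pro-trivial in the limit, kills $\varprojlim^1$, and identifies the surviving pro-group with $\{K_*^G(\A)/(1-\tau)^n K_*^G(\A)\}_n$; one then iterates over the finitely many $\tau \in S$ and uses that $S$ generates $\widehat{G}$ to see that the resulting ideals are cofinal with the powers of $I(G)$. Your phrase ``carrying out an exact-couple argument over the generators in $S$'' gestures at this but does not produce the exact sequences, the torsion bookkeeping, or the naturality check. So: right skeleton, same route as the cited source, but the load-bearing step is asserted rather than carried out.
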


\begin{Prop}\label{P_3410_Htpy}
Let $G$ be a topological group,
let $\D$ be a strongly self absorbing unital $C^*$-algebra,
and let $\alpha \colon G \to \aut (\D)$
be an action of $G$ on~$\D.$
Then the action $g \mapsto {\alpha}_g\otimes \id \colon G \to \aut (\D \otimes \D)$
is homotopic to the trivial action.
\end{Prop}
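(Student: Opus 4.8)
The plan is to produce an explicit point-norm continuous path $(\beta^{(t)})_{t \in [0,1]}$ of actions of $G$ on $\D \otimes \D$ with $\beta^{(0)} = (g \mapsto \alpha_g \otimes \id)$ and $\beta^{(1)}$ the trivial action; equivalently, a $C([0,1])$-linear action on $C([0,1], \D \otimes \D)$ whose endpoint evaluations are these two actions. The mechanism is an Eilenberg swindle realized as a homotopy: I would slide the action $\alpha$ ``out to infinity'' inside an infinite tensor power of $\D$, where it dissolves into the trivial action. First I would use that a strongly self absorbing algebra is separable and unital and satisfies $\D \cong \D \otimes \D$, so that $\D \cong \bigotimes_{n \geq 1} \D$ and hence $\D \otimes \D \cong \bigotimes_{n \geq 0} \D$. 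Under such an identification the action $g \mapsto \alpha_g \otimes \id$ becomes the action $\beta$ that is $\alpha$ on the $0$-th tensor factor and the identity on every factor of index $n \geq 1$.

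Next I would move the $0$-th factor past the $k$-th one. For each $k$ let $\sigma_{0,k}$ be the automorphism of $\bigotimes_{n \geq 0} \D$ transposing the factors of indices $0$ and $k$; then $\sigma_{0,k} \circ \beta_g \circ \sigma_{0,k}^{-1}$ is $\alpha$ on the $k$-th factor and trivial elsewhere. The key structural input is that the flip automorphism of $\D \otimes \D$ is approximately inner by unitaries lying in the connected component of the identity, a consequence of the strong self absorption of $\D$ together with the $K_1$-injectivity of strongly self absorbing algebras. Using this I would pick unitaries $v_k$ in the identity component with $\mathrm{Ad}(v_k)$ approximating $\sigma_{0,k}$ on ever larger finite sets, join $v_k$ to $v_{k+1}$ by continuous paths in the identity component, and reparametrize $[0,\infty)$ onto $[0,1)$ to obtain a single point-norm continuous path of unitaries $w^{(t)}$, independent of $g$. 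I then set
$$
\beta^{(t)}_g = \mathrm{Ad}\big(w^{(t)}\big) \circ \beta_g \circ \mathrm{Ad}\big(w^{(t)}\big)^{-1}, \qquad \beta^{(1)}_g = \id .
$$
For each $t < 1$ this is genuinely conjugate to $\beta$, hence an action, and joint continuity of $(g,t) \mapsto \beta^{(t)}_g$ on $[0,1)$ is inherited from the continuity of $\alpha$ and of $t \mapsto w^{(t)}$.

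The crux, and the step I expect to be the main obstacle, is continuity at the terminal point $t = 1$, where the family of mutually conjugate actions must collapse to the non-conjugate trivial action. Here I would argue by support: any $x \in \bigotimes_{n \geq 0} \D$ is approximated by elements supported on finitely many factors, say on $\{0, 1, \ldots, N\}$, and once the active index exceeds $N$ the automorphism $\sigma_{0,k} \circ \beta_g \circ \sigma_{0,k}^{-1}$ fixes such an element; since $w^{(t)}$ implements $\sigma_{0,k(t)}$ only approximately, the resulting error tends to $0$ as $t \to 1$, giving $\beta^{(t)}_g(x) \to x$. Combined with the fact that each $\beta^{(t)}_g$ is isometric, this shows that for every $\xi \in C([0,1], \D \otimes \D)$ the assignment $t \mapsto \beta^{(t)}_g(\xi(t))$ is continuous at $t = 1$, so the formula defines a $C([0,1])$-linear action on $C([0,1], \D \otimes \D)$ restricting to the two desired actions at the endpoints. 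The delicate points are therefore the continuous implementability of the coordinate transpositions (exactly where strong self absorption and $K_1$-injectivity enter) and the verification that the swindle, a priori only a discrete cancellation, can be threaded into one continuous homotopy terminating at the trivial action.
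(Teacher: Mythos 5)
Your overall strategy --- identifying $\D \otimes \D$ with $\bigotimes_{n \ge 0} \D$, invoking $K_1$-injectivity of strongly self absorbing algebras together with the Dadarlat--Winter theorem, and sliding the factor carrying $\alpha$ out to infinity --- is the same as the paper's. However, there is a genuine gap at the step you yourself single out as the crux, namely continuity at $t = 1$. You join $v_k$ to $v_{k+1}$ by an \emph{arbitrary} continuous path in the identity component of the unitary group, and your argument that $\beta^{(t)}_g(x) \to x$ uses that $\mathrm{Ad}\big(w^{(t)}\big)$ approximately implements a transposition $\sigma_{0,k(t)}$. That is true at the node parameters where $w^{(t)} = v_k$, but for intermediate $t$ the unitary $w^{(t)}$ need not approximately implement any coordinate permutation: nothing prevents the connecting path from passing through $1$, at which point $\beta^{(t)}_g = \beta_g$ moves elements with a nontrivial component in the $0$-th factor by a fixed positive amount, no matter how large $k$ is. Since $v_k$ and $v_{k+1}$ approximately implement genuinely different automorphisms, they cannot be joined by uniformly short paths, so this cannot be repaired by a norm estimate; as stated, the convergence $\beta^{(t)}_g(x) \to x$ as $t \to 1$ fails, and the family does not extend continuously to the trivial action.

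The repair is to localize the homotopy, which is what the paper does. By \cite[Theorem 2.2]{dadarlat-winter} (using $K_1$-injectivity from \cite{winter-ssa-Z-stable}) there is a continuous path of \emph{isomorphisms} $\psi_t$ of $\D \otimes \D$ from the identity to the flip; applying this path only to the adjacent factors $N$ and $N+1$ and tensoring with the identity elsewhere produces, on each interval $[N, N+1]$, a path of actions every one of which is literally trivial on $\bigotimes_{n=0}^{N-1} \D \otimes \bigotimes_{n = N+1}^{\infty} \C 1$ --- not merely at the endpoints of the interval. An $\eps/3$ argument then gives convergence to the trivial action at infinity, and one reparametrizes $[0,\infty]$ onto $[0,1]$. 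If you wish to keep your unitary formulation, you must impose the analogous locality: arrange, using $(0 \;\, k{+}1) = (k \;\, k{+}1)(0 \;\, k)(k \;\, k{+}1)$, that the path from $v_k$ to $v_{k+1}$ is $v_k$ multiplied by a path of unitaries supported in the tensor factors of index at least $k$, so that conjugation by every intermediate unitary still approximately fixes elements supported on the first $k$ factors. Without some such control the discrete swindle does not thread into a continuous homotopy terminating at the trivial action.
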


\begin{proof}
By \cite{winter-ssa-Z-stable}, any strongly self absorbing $C^*$-algebra is
$K_1$-injective. Thus, by \cite[Theorem 2.2]{dadarlat-winter}, 
there is a homotopy $t \mapsto \psi_t$
of isomorphisms $\psi_t \colon \D \otimes \D \to \D \otimes \D$,
for $t \in [0, 1],$
such that $\psi_0 = \id_{\D \otimes \D}$
and $\psi_1 (a \otimes b) = b \otimes a$
for all $a, b \in \D.$ Recall that 
$\displaystyle \D \cong \bigotimes_{n = 1}^{\infty} \D$. 
We now identify $\D \otimes \D$ with
$$
\D \otimes \bigotimes_{n = 1}^{\infty} \D
  = \bigotimes_{n = 0}^{\infty} \D,
$$
with the action
$$
g \mapsto \beta_g^{(0)} = \alpha_g \otimes \bigotimes_{n = 1}^{\infty} \id_{\D}.
$$
For $N \in \N$, let $\beta^{(N)}$ be the action 
$$
g \mapsto \beta_g^{(N)} = \left ( \bigotimes_{n = 0}^{N-1} \id_{\D} \right )
\otimes \alpha_g \otimes 
\left ( \bigotimes_{n = N+1}^{\infty} \id_{\D} \right ).
$$
Thus, we have a homotopy 
$$
(t,g) \mapsto \gamma_g^{(t)}
$$ 
for $t \in [0,1]$ and $g \in G$, of actions $\gamma^{(t)}$ of $G$ on $\displaystyle \bigotimes_{n=0}^{\infty} \D$, with $\gamma^{(0)}  = \beta^{(0)}$ and $\gamma^{(1)} = \beta^{(1)}$. For $N \in \N$ and $t \in [N,N+1]$, define an action $\gamma^{(t)}$ of $G$ on $\displaystyle \bigotimes_{n=0}^{\infty} \D$ by
$$
\gamma^{(t)}_g = \left ( \bigotimes_{n = 0}^{N-1} \id_{\D} \right )
\otimes \gamma^{(t-N)}
$$
for $g \in G$. This is a homotopy of actions which are trivial on $\displaystyle \bigotimes_{n = 0}^{N-1} \D \otimes \bigotimes_{n = N+1}^{\infty} \C 1$. An $\eps/3$ argument shows that $\displaystyle \lim_{t \to \infty}(a)= a$ for all $\displaystyle a \in \bigotimes_{n = 0}^{\infty} \D$ and $g \in G$. Thus, $\beta_g^{(0)}$ is homotopic to the trivial action. 
\end{proof}

For the following theorem we recall that any nontrivial compact Lie group
contains a nontrivial finite subgroup. If the group is itself finite, this is
tautological. Otherwise, the connected component of the identity contains a
nontrivial maximal torus, which evidently has nontrivial finite subgroups.
\begin{Thm}\label{T_3410_NoXRkhOI}
Let $G$ be a compact Lie group with more than one element,
and let $X$ be a compact free $G$-space.
\begin{enumerate}
\item \label{no-action-on-Z} There is no action $\alpha$ of $G$ on
$\Oh_{\infty}$ or $\Zh$ 
which has the $X$-Rokhlin property.
\item \label{no-action-on-UHF} If $\D$ is a UHF algebra, $p$ is
a prime number that is not a factor in the supernatural number corresponding to
$\D$, and $G$ furthermore has an element of order $p$, then there is no action
$\alpha$ of $G$ on $\D$ or $\D \otimes \Oh_{\infty}$ 
which has the $X$-Rokhlin property.
\end{enumerate}
\end{Thm}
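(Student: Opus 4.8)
The plan is to reduce the whole statement to a single assertion about strongly self absorbing algebras, and then to play the homotopy triviality of Proposition \ref{P_3410_Htpy} against the annihilation result of Corollary \ref{C_3410_IGAnnXR}, using the Atiyah--Segal theorem (Theorem \ref{T_3410_ASCmpl}) as the bridge between the $I(G)$-adic structure of equivariant $K$-theory and a genuinely homotopy-invariant quantity. First I would reduce to $G=\Z_p$ with $\A$ strongly self absorbing. As recalled before the theorem, any nontrivial compact Lie group contains a copy of $\Z_p$ for some prime $p$ (for part (\ref{no-action-on-UHF}) we take the $p$ of the hypothesis, using the given element of order $p$), and restricting an approximate equivariant central unital homomorphism from $C(X)$ along $\Z_p \hookrightarrow G$ again witnesses the $X$-Rokhlin property, with $X$ viewed as a free $\Z_p$-space; so we may assume $G=\Z_p$. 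To make the algebra strongly self absorbing while keeping $p$ non-invertible in $K_0$, I would tensor with an infinite type UHF algebra $\E$ carrying the trivial action: by Lemma \ref{L_3410_TPrdCt} the resulting $\alpha\otimes\id$ action on $\A\otimes\E$ still has the $X$-Rokhlin property, and $\E$ can be chosen so that $\A\otimes\E$ is strongly self absorbing with $K_0$ a localization of $\Z$ still not inverting $p$ (for $\Oh_\infty$ and $\Zh$ no tensoring is needed, as $K_0=\Z$). Thus it suffices to derive a contradiction from: $\A$ strongly self absorbing, $p$ not invertible in $K_0(\A)$, and $\alpha\colon \Z_p\to\aut(\A)$ with the $X$-Rokhlin property.

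Given such an $\alpha$, I would apply Lemma \ref{L_3410_TPrdCt} once more, with $\B=\A$ and the trivial action, to see that $\alpha\otimes\id$ on $\A\otimes\A$ also has the $X$-Rokhlin property; this is precisely the step that puts the action in the form $\alpha\otimes\id$ to which Proposition \ref{P_3410_Htpy} applies. By Corollary \ref{C_3410_IGAnnXR} there is $N$ with $I(\Z_p)^N K_*^{\Z_p}(\A\otimes\A,\,\alpha\otimes\id)=0$; in particular the hypothesis of Theorem \ref{T_3410_ASCmpl} is satisfied and, since the module is already killed by $I(\Z_p)^N$, its $I(\Z_p)$-adic completion is the module itself and is again $I(\Z_p)^N$-torsion.

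On the other hand, Proposition \ref{P_3410_Htpy} says that $\alpha\otimes\id$ is homotopic to the trivial action, and $RK_*\big([\,\cdot\,\otimes C(E\Z_p)]^{\Z_p}\big)$ is invariant under homotopies of the action (a homotopy of actions produces a $\Z_p$-algebra over $[0,1]$ whose two endpoint evaluations induce the same isomorphism on $RK_*$). Combining this with two applications of Theorem \ref{T_3410_ASCmpl} --- one for $\alpha\otimes\id$ and one for the trivial action, for which the stabilization hypothesis holds because $\A\subseteq\Q$-type $K$-theory forces $\ker(1-\tau)^k$ to stabilize at $k=1$ --- and using $\A\otimes\A\cong\A$, I obtain
$$
K_*^{\Z_p}(\A\otimes\A,\,\alpha\otimes\id)^{\wedge} \;\cong\; \big(R(\Z_p)\otimes_{\Z} K_*(\A)\big)^{\wedge}.
$$
The left-hand side is $I(\Z_p)^N$-torsion, so the contradiction must come from showing that the right-hand side is not.

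The final, purely algebraic step is the computation of $\big(R(\Z_p)\otimes_{\Z}K_0(\A)\big)^{\wedge}$. Writing $A_0=K_0(\A)\subseteq\Q$, the hypothesis that $p$ is not invertible in $A_0$ gives $A_0/pA_0\cong{\mathbb F}_p$, so the $p$-power torsion appearing in $R(\Z_p)/I(\Z_p)^n$ --- coming from the completed augmentation ideal, whose rational rank is $p-1$ --- survives tensoring with $A_0$. Hence the inverse system $\{(R(\Z_p)/I(\Z_p)^n)\otimes A_0\}_n$ does not stabilize, the completion retains a nonzero $p$-adically complete summand, and this summand is not annihilated by $I(\Z_p)^N$ for any $N$, contradicting the previous paragraph. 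I expect the main obstacle to be the homotopy invariance of $RK_*$ of the fixed point algebra of the Borel construction under a homotopy of the action: this is the one place where the $\sigma$-$C^*$-algebra and representable $K$-theory machinery of \cite{Ph4} must be invoked carefully, and it is exactly what makes Proposition \ref{P_3410_Htpy} usable even though equivariant $K$-theory itself is \emph{not} homotopy invariant for homotopies of the action. That failure is precisely what allows genuine Rokhlin actions (for instance of $\Z_2$ on $M_{2^\infty}$), which is consistent with the necessity of the hypothesis that $p$ not divide the supernatural number.
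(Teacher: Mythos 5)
Your proposal is correct and follows essentially the same route as the paper's proof: reduce to $G=\Z_p$ acting on a strongly self absorbing algebra, pass to $\alpha\otimes\id$ on the tensor square, use Proposition \ref{P_3410_Htpy} together with the homotopy invariance of $RK_*$ of the Borel construction (\cite[Corollary 4.2]{Ph4}) to compare the two ends via Theorem \ref{T_3410_ASCmpl}, and then derive the contradiction from Corollary \ref{C_3410_IGAnnXR} on one side versus the non-discreteness of $\big(R(\Z_p)\otimes_{\Z}K_0(\A)\big)^{\wedge}$ on the other, which hinges on $(1-\tau)^p\in pR(\Z_p)$ and $p$ not being invertible in $K_0(\A)$. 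The only cosmetic difference is that you unify parts (\ref{no-action-on-Z}) and (\ref{no-action-on-UHF}) into a single statement about strongly self absorbing algebras with $K_0\subseteq\Q$ not inverting $p$, whereas the paper treats the $R(G)^{\wedge}$ case and the UHF case in sequence; the underlying computation is identical.
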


\begin{proof}
In this proof,
we will need to consider equivariant K-theory
for different actions of the same group on the same $C^*$-algebra.
For an action $\alpha \colon G \to \aut (\A)$ of a compact
group,
we therefore write $K_*^{G, \alpha} (\A)$ for $K_*^G (\A).$

If $\A,\B$ are separable unital $C^*$-algebras, and $\alpha\colon G \to
\aut(\A)$ is an action with the $X$-Rokhlin property, then, by Lemma \ref{L_3410_TPrdCt}, the action $\alpha
\otimes \id$ on $\A \otimes_{\min} \B$ has the $X$-Rokhlin property as well.
Thus, for part (\ref{no-action-on-Z}), since $\Oh_{\infty} \cong \Zh \otimes
\Oh_{\infty}$, it suffices to show that there are no actions on $\Oh_{\infty}$
with the $X$-Rokhlin property. 
 This is purely to simplify notation, since the argument is $K$-theoretic in
nature --- the same argument below works verbatim if we replace all instances of
$\Oh_{\infty}$ with $\Zh$. Likewise, for part (\ref{no-action-on-UHF}) it suffices
to consider $\D \otimes \Oh_{\infty}$, and we can furthermore assume that $\D$ is strongly self absorbing, since we may replace it with $\bigotimes_{n=0}^{\infty} \D$, and a countable infinite tensor power of a UHF algebra is strongly self absorbing. We denote $\Oh_{\infty}$ or $\D
\otimes \Oh_{\infty}$ by $E$ when considering both parts simultaneously. 

Suppose there is such an action~$\alpha.$
Then $G$ has a nontrivial finite cyclic subgroup~$H,$
and $X$ is also a compact free $H$-space. For part (\ref{no-action-on-UHF}), we
take $H = \langle g \rangle$ where $g \in G$ is a given element of order $p$.
Therefore $\alpha |_H$ is an action of $H$
on $\E$ which has the $X$-Rokhlin property. Since we wish to show that no such
actions exist, we may assume without loss of generality that $G
\cong \Z_p$ (where $p$ is the given prime in part (\ref{no-action-on-UHF}), and
$p$ is just some prime number in part (\ref{no-action-on-Z})).
The algebra $\E$ satisfies the hypotheses
of Proposition~\ref{P_3410_Htpy}.
Therefore there is a homotopy $t \mapsto \beta^{(t)}$

of actions of $G$
on $\E \otimes \E$
such that $\beta^{(0)}_h = \alpha_h \otimes \id_{\E}$
and
$\beta^{(1)}_h
 = \id_{\E \otimes \E}$
for all $h \in H.$
According to \cite[Corollary~4.2]{Ph4},
the groups
$R K_*
 \big( [\E \otimes \E
                \otimes C (E H)]^{\beta^{(t)}} \big)$
do not depend on~$t.$

Take $t = 0.$
The action $\beta^{(0)}$ has the $X$-Rokhlin property
because $\alpha$ does.
So Corollary~\ref{C_3410_IGAnnXR}
provides an $n \in \N$ such that
$$
I (H)^n
 K_*^{H, \beta^{(0)}}
  (\E \otimes \E) = 0.
$$
The hypothesis 
of Theorem~\ref{T_3410_ASCmpl}
is satisfied,
because  ${\widehat{G}}$ is finite and for every $\tau \in {\widehat{G}}$ and every $k \geq n,$
we have
\[
\big\{ \eta \in K_*^{G, \beta^{(0)}} (\E
   \otimes \E) \colon (1 - \tau)^k \eta = 0 \big\}
  = K_*^G (\E
   \otimes \E).
\]
So Theorem~\ref{T_3410_ASCmpl} implies
\[
R K_* \big( [\E \otimes \E
               \otimes C (E G)]^{\beta^{(0)}} \big)
  \cong K_*^{G, \beta^{(0)}} (\E
     \otimes \E)^{\wedge}.
\]
In the $I (G)$-adic topology,
this group is discrete,
since it is annihilated by $I (G)^n.$

Now take $t = 1.$
We verify
the hypothesis 
of Theorem~\ref{T_3410_ASCmpl}. 

Let $\tau$ be a generator for the dual group ${\widehat{G}} \cong \Z_p$. Using
the trivial action,
$$
K_*^G ( \E )
 \cong R (G) \otimes_{\Z} K_0(\E) 
 \cong K_0(\E) [ \tau ] / \langle 1 - \tau^p \rangle,
$$ 
which we write as
$$
\left \{ \sum_{j=0}^{p-1} n_j\tau^j \colon n_j \in K_0(\E) \right \}.
$$
Since $K_0(E)$ is a nontrivial subgroup of $\Q$ (for all possibilities for the algebra $E$), the complexification $K_*^G(E)\otimes_{\Z}\C$ is given by 
$$
\left \{ \sum_{j=0}^{p-1} n_j\tau^j \colon n_j \in \C \right \}
$$
and we can view 
$$
\left \{ \sum_{j=0}^{p-1} n_j\tau^j \colon n_j \in K_0(\E) \right \} \subset \left \{
\sum_{j=0}^{p-1} n_j\tau^j \colon n_j \in \C \right \}
$$
by viewing $K_0(\E) \subseteq \Q \subset \C$ in the usual way.
It is now a straightforward computation to verify that for all $n$, thinking of $1-\tau$ as a linear transformation on $K_*^G(E)\otimes_{\Z}\C$, we have  
$$
\ker(1-\tau)^n = \ker(1-\tau) = \left \{ \sum_{j=0}^{p-1} n_j\tau^j \colon n_j =
n_0 \textrm{ for all } j \right \} .
$$
From this description it follows that the same holds without complexifying the
module. 
Thus,
the hypothesis 
of Theorem~\ref{T_3410_ASCmpl} holds.

We now consider the two parts of the theorem separately.

For part (\ref{no-action-on-Z}), Theorem~\ref{T_3410_ASCmpl} implies
\[
R K_* \big( [\Oh_{\infty} \otimes \Oh_{\infty}
                 \otimes C (E G)]^{\beta^{(1)}} \big)
  \cong K_*^{G, \beta^{(1)}} (\Oh_{\infty}
                 \otimes \Oh_{\infty})^{\wedge}
  \cong R (G)^{\wedge}.
\]
This group is not discrete in the $I (G)$-adic topology.
Since
\[
R K_* \big( [\Oh_{\infty} \otimes \Oh_{\infty}
               \otimes C (E G)]^{\beta^{(0)}} \big)
  \cong R K_* \big( [\Oh_{\infty} \otimes \Oh_{\infty}%
               \otimes C (E G)]^{\beta^{(1)}} \big),
\]
we have a contradiction.

For part (\ref{no-action-on-UHF}), we also need to show that $K_*^G ( \D \otimes
\Oh_{\infty} )^{\wedge}$ is not discrete. We first claim that there is a nonzero element $\sigma \in \Z[\tau]/\langle 1-\tau^p \rangle \cong R(G)$ such that $(1-\tau)^p = p\sigma$. 
If $p=2$, one checks that
$(1-\tau)^2 = 2(1-\tau)$. If $p$ is odd, 
since $(-\tau)^p = -1$, we can take
$$
\sigma = \frac{1}{p} \sum_{k=1}^{p-1} (-1)^k {{p}\choose{k}} \tau^k .
$$
(Since $p$ is prime, the coefficients really are in $\Z$.)
This proves the claim.

Since $1-\tau$ generates $I(G)$, it
follows that if $n\geq kp$ then
$$
I (G)^n K_*^G ( \D\otimes \Oh_{\infty} ) \subseteq p^k K_0^G(\D\otimes
\Oh_{\infty}) .
$$

We claim that multiplication by $1-\tau$ is injective on $(1-\tau)R(G)\otimes_{\Z}K_0(\D\otimes \Oh_{\infty})$. It suffices to prove this with $\C$ in place of $K_0^G(\D\otimes \Oh_{\infty})$, thus on 
$\C[\tau]/\langle 1-\tau^p \rangle$. Here, multiplication by $\tau$ is the cyclic shift with respect to the basis $1,\tau,\ldots,\tau^{p-1}$, so the claim is straightforward to check.

For all $m \in \N$, multiplication by $p$ is not invertible on $p^mR(G) \otimes_{\Z} K_0(\D \otimes \Oh_{\infty})$. So the sequence $\left ( 
I(G)^n K_*^G(\D \otimes \Oh_{\infty})
\right )_{n \in \N}$ does not stabilize.
Therefore $K_*^G ( \D \otimes \Oh_{\infty}
)^{\wedge}$ is not discrete.
Thus, no such action $\alpha$ exists.
\end{proof}

\begin{Rmk} 
The argument for part (\ref{no-action-on-UHF}) of Theorem \ref{T_3410_NoXRkhOI}
breaks down if $p$ does appear as a factor in the supernatural number, since
then multiplication by $p$ is invertible. Indeed, there are actions
of $\Z_p$ on the $p^{\infty}$ UHF algebra with the Rokhlin property. (For instance, let $v \in M_p$ be a cyclic permutation matrix of order $p$. Then it is easy to check that the action of $\Z_p$ given by the order $p$ automorphism 
$\bigotimes_1^{\infty} \mathrm{Ad}(v)$ of  $\bigotimes_1^{\infty}M_p$ has the Rokhlin property. See \cite[Example 3.2]{Izumi-I}.)
\end{Rmk}

\begin{Cor}
Let $G$ be a nontrivial finite group.
\begin{enumerate}
\item There is no action of $G$ on $\Zh$ or $\Oh_{\infty}$ which has finite Rokhlin dimension with commuting towers.
\item Let $p$ be a prime number. Let $\D$ be a UHF algebra which does not have $p$ as a factor in its corresponding supernatural number. If $G$ has an element of order $p$, then there is no action of $G$ on $\D$ or $\D \otimes \Oh_{\infty}$  which has finite Rokhlin dimension with commuting towers.
\end{enumerate}
\end{Cor}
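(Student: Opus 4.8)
The plan is to deduce both parts directly from Theorem~\ref{T_3410_NoXRkhOI} by translating finite Rokhlin dimension with commuting towers into the language of the $X$-Rokhlin property. The key observation is that a nontrivial finite group is in particular a compact Lie group with more than one element (as recorded just before Theorem~\ref{T_3410_NoXRkhOI}), and that each of $\Zh$, $\Oh_{\infty}$, a UHF algebra $\D$, and $\D \otimes \Oh_{\infty}$ is unital, so that Lemma~\ref{L_3410_CommTw} applies.

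First I would argue by contradiction. Suppose that $\alpha \colon G \to \aut(\A)$ is an action with $\dimrokct(\alpha) = d < \infty$, where $\A$ is one of the algebras in question. By Lemma~\ref{L_3410_CommTw} there is a compact metrizable free $G$-space $X$ (depending only on $G$ and $d$) such that the inequality $\dimrokct(\alpha) \leq d$ is equivalent to $\alpha$ having the $X$-Rokhlin property; in particular $\alpha$ has the $X$-Rokhlin property for this $X$.

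For part (1), I would then apply Theorem~\ref{T_3410_NoXRkhOI}(\ref{no-action-on-Z}) to the space $X$ produced above: it asserts that no action of $G$ on $\Zh$ or $\Oh_{\infty}$ has the $X$-Rokhlin property, contradicting the existence of $\alpha$. For part (2), the hypothesis on the subgroup is exactly what Theorem~\ref{T_3410_NoXRkhOI}(\ref{no-action-on-UHF}) requires: since $G$ has an element of order $p$ and $\D$ is a UHF algebra not having $p$ as a factor in its supernatural number, that theorem rules out any action of $G$ on $\D$ or $\D \otimes \Oh_{\infty}$ with the $X$-Rokhlin property, again a contradiction. In both cases we conclude that no action with finite Rokhlin dimension with commuting towers can exist.

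I do not expect any genuine obstacle here, since the entire content of the statement has already been established in Theorem~\ref{T_3410_NoXRkhOI}. The only point requiring any care is to observe that the conclusion of Theorem~\ref{T_3410_NoXRkhOI} holds for \emph{every} compact free $G$-space $X$, so it does not matter which space Lemma~\ref{L_3410_CommTw} returns for the given value of $d$; the contradiction is obtained uniformly in $d$, which is precisely what allows us to rule out finite Rokhlin dimension altogether rather than merely a fixed bound on it.
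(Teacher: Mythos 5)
Your proposal is correct and is exactly the argument the paper intends: Lemma~\ref{L_3410_CommTw} converts finite Rokhlin dimension with commuting towers into the $X$-Rokhlin property for a suitable compact metrizable free $G$-space $X$, and Theorem~\ref{T_3410_NoXRkhOI} then rules that out for every such $X$. The paper leaves this deduction implicit, and your write-up fills it in accurately, including the observation that the unitality hypothesis of Lemma~\ref{L_3410_CommTw} is satisfied and that the conclusion holds uniformly over all choices of $X$.
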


We conclude with an analog of Theorem \ref{T_3410_NoXRkhOI} for actions of connected Lie groups. The key fact we use concerning such groups is that they contain a subgroup isomorphic to the circle group $\T$. The proof is similar to that of Theorem \ref{T_3410_NoXRkhOI}, so we only outline the parts that need to be changed.
Recently, Eusebio Gardella showed with different methods that the statement of Theorem \ref{Thm:no-circle-action} holds in greater generality: there are no actions of infinite Lie groups with finite Rokhlin dimension with commuting towers on $C^*$-algebras with just one vanishing $K$-group. See \cite[Corollary 4.18]{gardella}.
\begin{Thm}
\label{Thm:no-circle-action}
Let $G$ be a compact group which contains a closed subgroup isomorphic to $\T$. Let $X$ be a compact free $G$-space. Then there is no action of $G$ on any UHF algebra with the $X$-Rokhlin property. 
\end{Thm}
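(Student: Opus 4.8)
The plan is to follow the strategy of Theorem~\ref{T_3410_NoXRkhOI} almost verbatim, restricting to the circle subgroup and replacing the prime-divisibility computation by the observation that the augmentation ideal of $R(\T)$ is generated by a non-zero-divisor. First I would reduce to the case $G = \T$: since $X$ is a compact free $G$-space and $\T$ embeds in $G$ as a closed subgroup, $X$ is also a compact free $\T$-space, and the restriction to $\T$ of any action with the $X$-Rokhlin property again has the $X$-Rokhlin property. Next, given a UHF algebra $D$ carrying such an action $\alpha$, I would pass to $\bigotimes_{n=0}^{\infty} D$, a strongly self absorbing UHF algebra; by Lemma~\ref{L_3410_TPrdCt} (with trivial action on the factors $n \geq 1$) the action $\alpha \otimes \id$ on $\bigotimes_{n=0}^{\infty} D$ still has the $X$-Rokhlin property. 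So I may assume $D$ is strongly self absorbing and $G = \T$ from the outset.

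With $D$ strongly self absorbing, Proposition~\ref{P_3410_Htpy} supplies a homotopy $t \mapsto \beta^{(t)}$ of actions of $\T$ on $D \otimes D$ with $\beta^{(0)}_g = \alpha_g \otimes \id$ and $\beta^{(1)}$ trivial, and \cite[Corollary~4.2]{Ph4} then shows that $RK_* \big( [D \otimes D \otimes C(E\T)]^{\beta^{(t)}} \big)$ is independent of $t$. At $t = 0$ the action has the $X$-Rokhlin property, so Corollary~\ref{C_3410_IGAnnXR} produces $n \in \N$ with $I(\T)^n K_*^{\T, \beta^{(0)}}(D \otimes D) = 0$. This both verifies the hypothesis of Theorem~\ref{T_3410_ASCmpl} (the kernels of the powers of $1 - \tau$ stabilize, to the whole module) and shows that the resulting completion is discrete in the $I(\T)$-adic topology.

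The heart of the matter is the endpoint $t = 1$. Here the action is trivial and $D \otimes D \cong D$, so, writing $\tau$ for a generator of $\widehat{\T} \cong \Z$ and using $K_1(D) = 0$ and $K_0(D) \subseteq \Q$, I would identify $K_*^{\T, \beta^{(1)}}(D \otimes D) \cong R(\T) \otimes_{\Z} K_0(D) \cong K_0(D)[\tau, \tau^{-1}]$. The key structural point, and the place where the circle differs from the finite groups of Theorem~\ref{T_3410_NoXRkhOI}, is that $I(\T) = (1 - \tau)$ is generated by a single non-zero-divisor: since $\Q[\tau, \tau^{-1}]$ is an integral domain, multiplication by $1 - \tau$ is injective on the submodule $K_0(D)[\tau, \tau^{-1}]$, so every kernel $\ker(1 - \tau)^k$ vanishes and the hypothesis of Theorem~\ref{T_3410_ASCmpl} again holds (with $n = 1$). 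Substituting $s = \tau - 1$ identifies the $I(\T)$-adic completion with the power series module $K_0(D)[[s]]$, which is Hausdorff but \emph{not} discrete, since $s^k \neq 0$ for all $k$ and $K_0(D) \neq 0$. Thus Theorem~\ref{T_3410_ASCmpl} gives $RK_* \big( [D \otimes D \otimes C(E\T)]^{\beta^{(1)}} \big) \cong K_0(D)[[s]]$, which is not discrete.

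These two endpoints are incompatible with the homotopy invariance from \cite[Corollary~4.2]{Ph4}, forcing a contradiction and ruling out the action. The main obstacle I anticipate is largely bookkeeping: confirming that Proposition~\ref{P_3410_Htpy}, \cite[Corollary~4.2]{Ph4}, and Theorem~\ref{T_3410_ASCmpl} all remain available for the compact abelian (but infinite) Lie group $\T$, and checking the representation-theoretic identifications $K_*^{\T}(D) \cong R(\T) \otimes_{\Z} K_0(D)$ and $R(\T) \cong \Z[\tau, \tau^{-1}]$ with $I(\T) = (1 - \tau)$. Once these are in place, the conceptual gain over the finite case is that no tensoring with $\Oh_{\infty}$ and no $p$-divisibility argument is needed: the non-discreteness is immediate from the fact that completing $\Z[\tau, \tau^{-1}]$ at $(1 - \tau)$ produces a formal power series ring.
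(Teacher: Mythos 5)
Your proposal is correct and follows essentially the same route as the paper: reduce to $G=\T$, replace the UHF algebra by a strongly self absorbing one, and derive a contradiction by comparing discreteness of the $I(\T)$-adic completions at the two ends of the homotopy from Proposition~\ref{P_3410_Htpy}, using Theorem~\ref{T_3410_ASCmpl} together with \cite[Corollary~4.2]{Ph4}. The only cosmetic differences are that the paper tensors with the universal UHF algebra rather than passing to $\bigotimes_{n=0}^{\infty} \D$, and it phrases non-discreteness as the non-stabilization of the chain $(1-\tau)^n\Q[\tau,\tau^{-1}]$ rather than via your (equally valid) identification of the completion with a power series module.
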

\begin{proof}
Since the $X$-Rokhlin property passes to subgroups, we may assume without loss of generality that $G = \T$. Let $\D$ be a UHF algebra. We wish to show that there is no action of $G$ on $\D$ with the $X$-Rokhlin property. We may assume without loss of generality that $\D$ is the universal UHF algebra, since if we have an action $\alpha$ of $G$ on $\D$ with the $X$-Rokhlin property, we can tensor it by the identity action of $G$ on the universal UHF algebra, and by Lemma \ref{L_3410_TPrdCt} the tensor product action has the $X$-Rokhlin property as well. 
The dual group $\widehat{G} \cong \Z$ is singly generated. We have $R(G) \cong \Z[\tau,\tau^{-1}]$, so $R(G) \otimes_{\Z} K_0(\D) \cong \Q[\tau,\tau^{-1}]$. One checks that multiplication by $1-\tau$ is injective on $\Q[\tau,\tau^{-1}]$ and the subgroups
$(1-\tau)^n \Q[\tau,\tau^{-1}]$ do not stabilize. The rest of the proof is similar to that of Theorem \ref{T_3410_NoXRkhOI}. 
\end{proof}
\begin{Rmk}
The conclusion of Theorem \ref{Thm:no-circle-action} does not hold if instead of assuming that $G$ contains a copy of the circle, we only assume that $G$ is an infinite compact group. For example, it is easy to construct an action of 
$\prod_{n=1}^{\infty} \Z_2$ on the $2^{\infty}$ UHF algebra with the Rokhlin property. 
\end{Rmk}

\bibliographystyle{alpha}
\bibliography{Rokhlin-ext-bib}

\newcommand{\etalchar}[1]{$^{#1}$}
\begin{thebibliography}{BEM{\etalchar{+}}14}

\bibitem[AS69]{AS}
M.~F. Atiyah and G.~B. Segal.
\newblock Equivariant {$K$}-theory and completion.
\newblock {\em J. Differential Geometry}, 3:1--18, 1969.

\bibitem[BEM{\etalchar{+}}14]{BEMSW}
Selcuk Barlak, Dominic Enders, Hiroki Matui, G\'{a}bor Szab\'{o}, and Wilhelm
  Winter.
\newblock The {R}okhlin property vs. {R}okhlin dimension 1 on unital
  {K}irchberg algebras.
\newblock preprint, arXiv:1312.6289v2, 2014.

\bibitem[BH13]{brown-hirshberg}
Jonathan Brown and Ilan Hirshberg.
\newblock The {R}okhlin property for endomorphisms of {$C^*$}-algebras.
\newblock to appear, Illinois J. Math. arXiv:1311.3274v2, 2013.

\bibitem[BO08]{Brown-Ozawa}
Nathanial~P. Brown and Narutaka Ozawa.
\newblock {\em {$C^*$}-algebras and finite-dimensional approximations},
  volume~88 of {\em Graduate Studies in Mathematics}.
\newblock American Mathematical Society, Providence, RI, 2008.

\bibitem[BSKR93]{BSKR}
Ola Bratteli, Erling St{\o}rmer, Akitaka Kishimoto, and Mikael R{\o}rdam.
\newblock The crossed product of a {UHF} algebra by a shift.
\newblock {\em Ergodic Theory Dynam. Systems}, 13(4):615--626, 1993.

\bibitem[Cun77]{Cuntz}
Joachim Cuntz.
\newblock Simple {$C\sp*$}-algebras generated by isometries.
\newblock {\em Comm. Math. Phys.}, 57(2):173--185, 1977.

\bibitem[DW09]{dadarlat-winter}
Marius Dadarlat and Wilhelm Winter.
\newblock On the {$KK$}-theory of strongly self-absorbing {$C^*$}-algebras.
\newblock {\em Math. Scand.}, 104(1):95--107, 2009.

\bibitem[End14]{enders}
Dominic Enders.
\newblock On the nuclear dimension of certain {UCT}-{K}irchberg algebras.
\newblock preprint, arXiv:1405.6538v2, 2014.

\bibitem[Gar14]{gardella}
Eusebio Gardella.
\newblock {R}okhlin dimension for compact group actions.
\newblock preprint, arXiv:1407.1277v2, 2014.

\bibitem[HO13]{hirshberg-orovitz}
Ilan Hirshberg and Joav Orovitz.
\newblock Tracially {$\Zh$}-absorbing {$C^*$}-algebras.
\newblock {\em J. Funct. Anal.}, 265(5):765--785, 2013.

\bibitem[HRW07]{HRW}
Ilan Hirshberg, Mikael R{\o}rdam, and Wilhelm Winter.
\newblock {$C_0(X)$}-algebras, stability and strongly self-absorbing
  {$C^*$}-algebras.
\newblock {\em Math. Ann.}, 339(3):695--732, 2007.

\bibitem[HW07]{HW}
Ilan Hirshberg and Wilhelm Winter.
\newblock Rokhlin actions and self-absorbing {$C^*$}-algebras.
\newblock {\em Pacific J. Math.}, 233:125--143, 2007.

\bibitem[HW08]{HW-permutations}
Ilan Hirshberg and Wilhelm Winter.
\newblock Permutations of strongly self-absorbing {$C^*$}-algebras.
\newblock {\em Internat. J. Math.}, 19(9):1137--1145, 2008.

\bibitem[HWZ12]{HWZ}
Ilan Hirshberg, Wilhelm Winter, and Joachim Zachrias.
\newblock Rokhlin dimension and {$C^*$}-dynamics.
\newblock to appear, Comm. Math. Phys. arXiv:1209.1618v1, 2012.

\bibitem[Izu01]{Izumi-survey}
Masaki Izumi.
\newblock The {R}ohlin property for automorphisms of {$C^*$}-algebras.
\newblock In {\em Mathematical physics in mathematics and physics ({S}iena,
  2000)}, volume~30 of {\em Fields Inst. Commun.}, pages 191--206. Amer. Math.
  Soc., Providence, RI, 2001.

\bibitem[Izu04a]{Izumi-I}
Masaki Izumi.
\newblock Finite group actions on {$C^*$}-algebras with the {R}ohlin property.
  {I}.
\newblock {\em Duke Math. J.}, 122(2):233--280, 2004.

\bibitem[Izu04b]{Izumi-II}
Masaki Izumi.
\newblock Finite group actions on {$C^*$}-algebras with the {R}ohlin property.
  {II}.
\newblock {\em Adv. Math.}, 184(1):119--160, 2004.

\bibitem[Kir06]{Kirchberg-Abel}
Eberhard Kirchberg.
\newblock Central sequences in {$C^*$}-algebras and strongly purely infinite
  algebras.
\newblock In {\em Operator {A}lgebras: {T}he {A}bel {S}ymposium 2004}, volume~1
  of {\em Abel Symp.}, pages 175--231. Springer, Berlin, 2006.

\bibitem[Kis96]{Kishimoto-flows}
A.~Kishimoto.
\newblock A {R}ohlin property for one-parameter automorphism groups.
\newblock {\em Comm. Math. Phys.}, 179(3):599--622, 1996.

\bibitem[KW04]{kirchberg-winter}
Eberhard Kirchberg and Wilhelm Winter.
\newblock Covering dimension and quasidiagonality.
\newblock {\em Internat. J. Math.}, 15(1):63--85, 2004.

\bibitem[Mun00]{Munkres-textbook}
James~R. Munkres.
\newblock {\em Topology}.
\newblock Prentice-Hall, second edition, 2000.

\bibitem[OP12]{OP}
Hiroyuki Osaka and N.~Christopher Phillips.
\newblock Crossed products by finite group actions with the {R}okhlin property.
\newblock {\em Math. Z.}, 270(1-2):19--42, 2012.

\bibitem[Phi87]{Ph1}
N.~Christopher Phillips.
\newblock {\em Equivariant {$K$}-theory and freeness of group actions on
  {$C^*$}-algebras}, volume 1274 of {\em Lecture Notes in Mathematics}.
\newblock Springer-Verlag, Berlin, 1987.

\bibitem[Phi88]{Ph2}
N.~Christopher Phillips.
\newblock Inverse limits of {$C^*$}-algebras.
\newblock {\em J. Operator Theory}, 19(1):159--195, 1988.

\bibitem[Phi89a]{Ph4}
N.~Christopher Phillips.
\newblock The {A}tiyah-{S}egal completion theorem for {$C^*$}-algebras.
\newblock {\em $K$-Theory}, 3(5):479--504, 1989.

\bibitem[Phi89b]{Ph3}
N.~Christopher Phillips.
\newblock Representable {$K$}-theory for {$\sigma$}-{$C^*$}-algebras.
\newblock {\em $K$-Theory}, 3(5):441--478, 1989.

\bibitem[Phi92]{Ph5}
N.~Christopher Phillips.
\newblock The {T}oeplitz operator proof of noncommutative {B}ott periodicity.
\newblock {\em J. Austral. Math. Soc. Ser. A}, 53(2):229--251, 1992.

\bibitem[Phi09]{Phillips-survey}
N.~Christopher Phillips.
\newblock Freeness of actions of finite groups on {$C^*$}-algebras.
\newblock In {\em Operator structures and dynamical systems}, volume 503 of
  {\em Contemp. Math.}, pages 217--257. Amer. Math. Soc., Providence, RI, 2009.

\bibitem[R{\o}r95]{rordam95}
Mikael R{\o}rdam.
\newblock Classification of certain infinite simple {$C^*$}-algebras.
\newblock {\em J. Funct. Anal.}, 131(2):415--458, 1995.

\bibitem[RSS14]{rss-nuc-dim}
Efren Ruiz, Aidan Sims, and Adam P.~W. S{\o}rensen.
\newblock {UCT}-{K}irchberg algebras have nuclear dimension one.
\newblock preprint, arXiv:1406.2045v2, 2014.

\bibitem[RW10]{rordam-winter}
Mikael R{\o}rdam and Wilhelm Winter.
\newblock The {J}iang-{S}u algebra revisited.
\newblock {\em J. Reine Angew. Math.}, 642:129--155, 2010.

\bibitem[Sat10]{Sato}
Yasuhiko Sato.
\newblock The {R}ohlin property for automorphisms of the {J}iang-{S}u algebra.
\newblock {\em J. Funct. Anal.}, 259(2):453--476, 2010.

\bibitem[Seg68]{Sg1}
Graeme Segal.
\newblock The representation ring of a compact {L}ie group.
\newblock {\em Inst. Hautes \'Etudes Sci. Publ. Math.}, 34:113--128, 1968.

\bibitem[SWZ14]{SWZ}
G\'{a}bor Szab\'{o}, Jianchao Wu, and Joachim Zacharias.
\newblock Rokhlin dimension for actions of residually finite groups.
\newblock preprint, arXiv:1408.6096v2, 2014.

\bibitem[Sza13]{szabo}
G\'{a}bor Szab\'{o}.
\newblock The {R}okhlin dimension of topological {$\Z^m$}-actions.
\newblock preprint, arXiv:1308.5418v4, 2013.

\bibitem[Win11]{winter-ssa-Z-stable}
Wilhelm Winter.
\newblock Strongly self-absorbing {$C^*$}-algebras are {$\mathcal{Z}$}-stable.
\newblock {\em J. Noncommut. Geom.}, 5(2):253--264, 2011.

\bibitem[WZ09]{winter-zacharias-order-zero}
Wilhelm Winter and Joachim Zacharias.
\newblock Completely positive maps of order zero.
\newblock {\em M\"unster J. Math.}, 2:311--324, 2009.

\bibitem[WZ10]{winter-zacharias}
Wilhelm Winter and Joachim Zacharias.
\newblock The nuclear dimension of {$C^\ast$}-algebras.
\newblock {\em Adv. Math.}, 224(2):461--498, 2010.

\end{thebibliography}
\end{document}